\newtheoremstyle{montheoreme}
  {}
  {}
  {\itshape}
  {}
  {\bf}
  {.}
  {.5em}
  {}
\newtheoremstyle{maremarque}
  {}
  {}
  {}
  {}
  {\bf}
  {.}
  {.5em}
  {}
\theoremstyle{montheoreme}
\newtheorem{thm}{Theorem}[section]
\newtheorem{defn}[thm]{Definition}
\newtheorem{prop}[thm]{Proposition}
\newtheorem{lem}[thm]{Lemma}
\newtheorem{cor}[thm]{Corollary}
\theoremstyle{maremarque}
\newtheorem{rmq}{Remark}[section]
\DeclareMathOperator{\Ent}{Ent}
\newcommand{\R}{\mathbb{R}}
\newcommand{\N}{\mathbb{N}}
\newcommand{\T}{\mathbb{T}}
\newcommand{\Q}{\mathbb{Q}}
\begin{document}

\title{A gradient flow approach to large deviations for diffusion processes.}
\author{Max Fathi \thanks{LPMA, University Paris 6, France, max.fathi@etu.upmc.fr.} }
\date{\today}

\maketitle

\begin{abstract}
\bigskip In this work, we investigate links between the formulation of the flow of marginals of reversible diffusion processes as gradient flows in the space of probability measures and path wise large deviation principles for sequences of such processes. An equivalence between the LDP principle and Gamma-convergence for a sequence of functionals appearing in the gradient flow formulation is proved. As an application, we study large deviations from the hydrodynamic limit for two variants of the Ginzburg-Landau model endowed with Kawasaki dynamics.

\end{abstract}

\vspace{1cm}

{\Large \textbf{Introduction}}

In this work, we are interested in the links between the gradient flow formulation of the flow of marginals of stochastic differential equations, and path wise large deviations for sequences of such processes.

Interest in gradient flows on the space of probability measures goes back to [JKO], where it was observed that the heat equation can be viewed as the gradient flow of the entropy 
$$\Ent(\rho) = \int{\rho \log \rho dx}$$
for the Wasserstein distance $W_2$. Note that what we will call here entropy is the negative of the physical entropy. This was later developed into a notion of formal Riemannian structure on $\mathcal{P}(\R^n)$ by Otto in [O]. While a powerful tool to predict the behavior of certain partial differential equations, the point of view of Otto is formal, and we must rely on other tools for proofs. 

Another point of view was developed by Ambrosio, Gigli and Savar\'e in [AGS], which uses the notion of "`minimizing movement"' schemes, developed by De Giorgi and which first appeared in [DGMT], to provide a rigorous framework to define gradient flows on spaces of probability measures. It is based on the idea that gradient flows on $\R^n$ of the form 
$$\dot{x}(t) = -\nabla F(x(t))$$
are the only solutions of 
\begin{equation} \label{eq_grad_flow}
F(x(T)) - F(x(0)) + \frac{1}{2}\int_0^T{|\nabla F(x(t))|^2dt} + \frac{1}{2}\int_0^T{|\dot{x}(t)|^2dt} = 0.
\end{equation}
While the usual gradient flow equation only makes sense in a Riemannian setting (at least in the classical sense), this alternative formulation can be given a meaning in a purely metric setting, as long as we can define a "`length of the gradient"' functional $|\nabla F|$. Section 1.1 concerns this formulation in the setting of the space of probability measures on $\R^n$ endowed with a Wasserstein distance $W_2$, when the functional $F$ is the relative entropy with respect to a nonnegative measure $\mu$, that is
$$
\Ent_{\mu}(\nu) := \int{f \log f d\mu}
$$
if $\nu = f\mu$, and $+\infty$ if $\nu$ is not absolutely continuous with respect to $\mu$. This is the framework developed in the first sections of [AGS].

Several recent papers have been interested in using abstract gradient flow formulations to study convergence of sequences of solutions to partial differential equations. One method, tailored for the case of diffusion processes and based on the discrete approximation of gradient flows, has been devised in [ASZ]. Another, more general, method has been presented in [S]. It consists in studying the asymptotic behavior of sequence of functionals of the form (\ref{eq_grad_flow}) for given functions $F_n$. Informally, it consists in showing that, if the sequence converges in a certain sense to a limiting functional of the same form, with a function $F_{\infty}$, we can directly identify limits of solutions of (\ref{eq_grad_flow}) as gradients flows for the limiting function $F_{\infty}$. 

In the context of statistical physics, the method developed in [S] can be used to prove convergence to the hydrodynamic limit for some models of interacting diffusion processes, such as the Ginzburg-Landau model (see [GPV] or [GOVW] for a presentation of the model, and its hydrodynamic limit). Such results consist in a convergence in probability of the dynamics to a deterministic limiting object, given in general as the solution to some partial differential equation. 

Our aim here is to use the notion of gradient flows to study large deviations from the hydrodynamic limit for interacting spin systems. Such a result consists in proving that the probabilities of a significant deviation from the hydrodynamic limit decays exponentially fast in the system size.  A standard textbook on the topic of large deviations is [DZ], and [KL] contains a review of the literature in the context of large deviations from the hydrodynamic limit for particle systems.

In the recent series of contribution [ADPZ1], [ADPZ2] and [DLR], links between gradient flows in spaces of probability measures and large deviations for many examples of processes arising in statistical physics have been investigated. The main contribution is to show that the gradient flow formulation for partial differential equations such as the heat equation can be deduced from the large deviation principle for $N$ independent stochastic processes given by the stochastic differential equation whose flow of marginals is the solution to the PDE.

In this paper, we prove that process-level large deviations for sequences of diffusion processes are equivalent to the Gamma-convergence of a sequence of functionals that naturally appear in the gradient-flow formulation of these processes. This result generalizes a method used in [DG] and [Fo] to obtain process-level large deviations for the empirical measure of independent Brownian motions. Although these previous works do not discuss gradient flows or optimal transport, there are a lot of similarities between the formalism we use here and their framework, and the proof is based on a similar method.

As an application of this equivalence, we investigate the large deviations for two variants of the Ginzburg-Landau model endowed with Kawasaki dynamics, giving an alternative approach to obtaining the results of [DV] and [Q]. The first model is a random conductance model, and the second one is the non-gradient Ginzburg-Landau model of [Va] and [Q]. As far as the author knows, the large deviation principle for the random conductance model obtained here is new.

\vspace{1cm}

\tableofcontents

\vspace{1cm}

{\Large\textbf{Notations}}

\begin{itemize}

\item If $A$ is a symmetric positive matrix, then $\sqrt{A}$ is the unique symmetric positive matrix whose square is $A$;

\item $W_{2,G}$ is the Wasserstein distance on $P_2(\R^d)$ for the Riemannian distance $d_G$ on $\R^d$ endowed with the metric tensor $G : \R^d \longrightarrow \mathcal{S}_{++}(\R^d)$. It is given by
$$W_{2,G}^2(\mu_0, \mu_1) := \underset{\pi}{\inf} \hspace{1mm} \int{d_G(x,y)^2\pi(dx, dy)}$$
where the infimum is taken over all coupling $\pi$ of the probability measures $\mu_0$ and $\mu_1$;

\item $Z$ is a constant enforcing unit mass for a probability measure;

\item $C$ is a constant that may change from line to line, or even within a line;

\item $C_b(X)$ is the space of real-valued, continuous bounded functions on the space $X$;

\item $\text{div}(A)(x)$ is the vector of $\R^d$ with coordinates $(\text{div}(A)(x))_i := \underset{j = 1}{\stackrel{d}{\sum}} \hspace{1mm} \frac{\partial A_{ij}}{\partial x_j}(x)$, where $A : \R^d \longrightarrow \mathcal{M}_d(\R)$.

\end{itemize}

\vspace{1cm}

\section{Framework and Method}

\subsection{Gradient flows in $P_2(\mathbb{R}^n)$}

In this section, we endow $\mathbb{R}^n$ with a Riemannian structure, given by a metric tensor $G(x)$, and a positive measure $\mu$ that is absolutely continuous with respect to the Lebesgue measure. We also consider the functional on the space of probability measures
\begin{equation}
\Ent_{\mu}(\nu) := \int{f \log f d\mu}
\end{equation}
when $\nu = f\mu$, and that takes value $+\infty$ for probability measures that are not absolutely continuous with respect to $\mu$. Note that although we call this functional the entropy, it is the \emph{negative} of the physical entropy. When $\mu = \exp(-H)$, this functional can be written as
$$\Ent_{\mu}(\nu) = \int{f \log f dx} + \int{H(x) \nu(dx)}.$$

We can endow the space of probability measures with finite second moments with the Wasserstein distance associated to the Riemannian metric structure $W_{2,G}$.

In the sequel, we will consider curves $(\nu_t)_{t \in [0,T]}$ in $\mathcal{P}_2(\R^d)$ which are absolutely continuous, that is there exists a nonnegative function $g \in L^1([0,T])$ (which depends on $(\nu_t)_{t \in [0,T]}$) such that for any $s \leq t$ we have

\begin{equation}
W_{2,G}(\nu_s, \nu_t) \leq \int_s^t{g(r)dr}.
\end{equation}

We will require the following technical assumptions on the metric tensor $G$: 

\begin{equation} \label{assump-tensor1}
\frac{1}{c}|\xi|^2 \leq \langle G(x) \xi, \xi \rangle \leq c|\xi|^2, \forall x \in \R^n, \forall \xi \in \R^n
\end{equation}
for some constant $c$, and
\begin{equation} \label{assump-tensor2}
x \longrightarrow \langle G(x) \xi, \xi \rangle \hspace{5mm} \text{is lower semicontinuous} \hspace{2mm} \forall \xi \in \R^n.
\end{equation}

Since we now have a metric structure on $P_2(\mathbb{R}^n)$, we can define the metric derivative of an absolutely continuous curve $(\nu_t)$ as

\begin{equation}
|\dot{\nu}|(t) := \underset{h \rightarrow 0^+}{\limsup} \hspace{1mm} \frac{1}{h} W_{2,G}(\nu_t, \nu_{t +h}).
\end{equation}

\begin{defn}
We denote by $|| h ||_{\nu}$ the $H^1$ of the norm of the smooth function $h$, defined by
\begin{equation}
|| h ||_{\nu}^2 := \int{\langle A \nabla h, \nabla h \rangle d\nu}
\end{equation}
and $|| \rho ||_{\nu,*}$ its dual norm, given by

\begin{equation} \label{def_dual_norm}
|| \rho ||_{\nu,*}^2 := \underset{h}{\sup} \hspace{1mm} 2\int{h\rho d\nu} - || h ||_{\nu}^2,
\end{equation}
where the supremum runs over all smooth function $h : \R^n \rightarrow \R$.
\end{defn}

Let $g$ be defined by

\begin{equation} \label{def_upper_gradient}
g(\nu) := \left(\int{ \left\langle \frac{A\nabla \nu}{\nu} + A\nabla H, \frac{\nabla \nu}{\nu} + \nabla H \right\rangle d\nu}\right)^{1/2}
\end{equation}
if $\nu$ is absolutely continuous with respect to the Lebesgue measure, and $+\infty$ else wise.

The following result explains how the functional $g$ can be used to control the variation in relative entropy for absolutely continuous curves in the space of probability measures. Its proof in this context can be found in [L] (which generalizes previous results of [AGS]).

\begin{prop}
$g$ is an upper gradient for $\Ent_{\mu}$, i.e. for every absolutely continuous curve $(\nu_t)_{0 \leq t \leq T}$ we have 
$$|\Ent_{\mu}(\nu_t) - \Ent_{\mu}(\nu_s)| \leq \int_s^t{g(\nu_r)|\dot{\nu}|(r)dr}$$
for every $0 \leq s \leq t \leq T$.
\end{prop}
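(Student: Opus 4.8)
The plan is to establish the inequality along the same lines as the classical argument of Ambrosio--Gigli--Savaré, reducing it to a chain rule for the entropy along an absolutely continuous curve combined with an appropriate ``dual'' identification of the two functionals $g$ and $|\dot\nu|$. First I would recall that, by the Benamou--Brenier-type characterization of $W_{2,G}$, an absolutely continuous curve $(\nu_t)$ admits a ``velocity field'' $v_t$ solving the continuity equation $\partial_t \nu_t + \mathrm{div}(A v_t \nu_t) = 0$ in the weak sense (the matrix $A$ enters because we measure lengths with the metric tensor $G$, with $A = G^{-1}$ in the relevant normalization), with
$$\int_s^t \Big(\int \langle A v_r, v_r\rangle \, d\nu_r\Big)^{1/2} dr \leq \int_s^t |\dot\nu|(r)\, dr,$$
and in fact one may choose $v_r$ so that $\|v_r\|_{\nu_r} = |\dot\nu|(r)$ for a.e. $r$, with $v_r$ in the closure of gradients of smooth functions in $L^2(\nu_r)$. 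The assumptions (\ref{assump-tensor1}) and (\ref{assump-tensor2}) on $G$ guarantee that this $W_{2,G}$ is well-behaved and comparable to the Euclidean $W_2$, so the standard theory applies.

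Next I would prove the chain rule: for $\nu_r = f_r \mu$ with $\Ent_\mu(\nu_r)$ finite on the interval, the map $r \mapsto \Ent_\mu(\nu_r)$ is absolutely continuous and
$$\frac{d}{dr}\Ent_\mu(\nu_r) = \int \langle v_r, \nabla \log f_r + \nabla H\rangle \, d\nu_r = -\int \Big\langle v_r, \frac{\nabla \nu_r}{\nu_r} + \nabla H\Big\rangle\, d\nu_r$$
up to sign conventions; this is obtained by testing the continuity equation against $\log f_r$ (suitably regularized and truncated), using that $\nabla\log f_r \in L^2(\nu_r)$ precisely when $g(\nu_r) < \infty$. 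Once this identity is in hand, Cauchy--Schwarz with respect to the inner product $\langle A\, \cdot\, ,\, \cdot\, \rangle_{\nu_r}$ gives
$$\Big|\frac{d}{dr}\Ent_\mu(\nu_r)\Big| \leq \Big(\int\langle A v_r, v_r\rangle d\nu_r\Big)^{1/2}\Big(\int\Big\langle \frac{A\nabla\nu_r}{\nu_r} + A\nabla H, \frac{\nabla\nu_r}{\nu_r} + \nabla H\Big\rangle d\nu_r\Big)^{1/2} = |\dot\nu|(r)\, g(\nu_r),$$
and integrating from $s$ to $t$ yields the claim; the case where $g(\nu_r)$ or $\Ent_\mu$ is infinite somewhere is handled by noting the right-hand side is then $+\infty$, or by a truncation/approximation argument.

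The main obstacle is the rigorous justification of the chain rule, i.e. that $r\mapsto \Ent_\mu(\nu_r)$ is absolutely continuous with the stated derivative: one cannot simply differentiate under the integral sign since $f_r\log f_r$ need not be smooth in $r$ and $\log f_r$ is singular where $f_r$ vanishes. The standard remedy is to regularize by convolution and truncate $\log$ at levels $\pm M$, derive the identity for the regularized entropy, and pass to the limit using lower semicontinuity of $\Ent_\mu$, the bound $g(\nu_r)<\infty$ a.e., and a dominated-convergence argument exploiting that $\int_s^t g(\nu_r)|\dot\nu|(r)\,dr < \infty$ in the nontrivial case. Since the excerpt explicitly attributes this proposition to [L] (generalizing [AGS]), I would in practice cite that reference for the delicate approximation step and only sketch the dual identification $g(\nu) = \sup_h\{ -\frac{d}{dr}\big|_{r=0}\Ent_\mu(\ldots) \}$-style computation showing $g$ is the metric slope, which is what makes it an upper gradient.
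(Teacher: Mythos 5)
The paper does not actually supply its own proof of this proposition: it simply cites [L] (which generalizes the corresponding result of [AGS]), so there is no in-text argument to compare against. Your sketch reconstructs the standard chain-rule-plus-Cauchy--Schwarz argument that underlies that reference, and it is the right route: represent the absolutely continuous curve via a velocity field solving the continuity equation for the $W_{2,G}$ metric, differentiate the entropy along the curve using the continuity equation, pair with $\tfrac{\nabla\nu_r}{\nu_r}+\nabla H$, and estimate by Cauchy--Schwarz in the $A$-weighted inner product so that the two factors are precisely $|\dot\nu|(r)$ and $g(\nu_r)$; then integrate. You also correctly identify the genuine technical content — justifying absolute continuity of $r\mapsto\Ent_\mu(\nu_r)$ and the chain rule by truncation and regularization — and appropriately defer to [L] for that step, just as the paper does.

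One small normalization slip worth flagging: having written the continuity equation as $\partial_t\nu_t + \mathrm{div}(A v_t\nu_t)=0$ with $|\dot\nu|(r)^2=\int\langle A v_r,v_r\rangle\,d\nu_r$, the chain rule should read $\tfrac{d}{dr}\Ent_\mu(\nu_r)=\int\langle A v_r,\,\tfrac{\nabla\nu_r}{\nu_r}+\nabla H\rangle\,d\nu_r$ (an $A$ is missing in your intermediate display, and the displayed $\nabla\log f_r+\nabla H$ should just be $\nabla\log f_r=\tfrac{\nabla\nu_r}{\nu_r}+\nabla H$). With that $A$ in place, the Cauchy--Schwarz pairing via $A^{1/2}$ produces exactly the two factors you wrote, so the final inequality and conclusion are correct; this is a bookkeeping typo rather than a gap.
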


\begin{defn}
Let $(\nu_t)_{t \in [0,T]}$ be a time-dependent family of measures that is absolutely continuous.We say it is a gradient flow of the functional $\Ent_{\mu}$ if 
\begin{equation} \label{def_gf}
\frac{d}{dt} H_{\mu}(\nu_t) = -\frac{1}{2}g(\nu_t)^2 - \frac{1}{2}|\dot{\nu}|(t)^2
\end{equation}
for almost every $t \in [0,T]$.
\end{defn}

Gradient flows for the Wasserstein structure on $\mathcal{P}(\R^d)$ endowed with an Euclidean structure have been studied in [AGS], and it turns out they are related to the heat equation. Their results were then generalized to the case of a Riemannian structure in [L]: 

\begin{prop}
$(\nu_t)_{t \in [0,T]}$ satisfies (\ref{def_gf}) iff the densities $f(t,\cdot) = \frac{d\nu_t}{d\mu}$ form a weak solution of the parabolic PDE
\begin{equation}
\frac{\partial f\mu}{\partial t} = \nabla \cdot(A (\nabla f)\mu), 
\end{equation}
where $A = G^{-1}$.
\end{prop}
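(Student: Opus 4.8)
The plan is to establish the equivalence between the energy-dissipation identity \eqref{def_gf} and the weak formulation of the parabolic PDE $\partial_t (f\mu) = \nabla\cdot(A(\nabla f)\mu)$ by first making explicit the three quantities appearing in \eqref{def_gf}: the entropy dissipation $\frac{d}{dt}\Ent_\mu(\nu_t)$, the ``slope'' term $g(\nu_t)^2$, and the metric derivative $|\dot\nu|(t)^2$. The first step is the Benamou--Brenier type characterization of the metric derivative: for an absolutely continuous curve $(\nu_t)$ there exists a (time-dependent) velocity field $v_t$ with $\partial_t \nu_t + \nabla\cdot(\nu_t v_t) = 0$ in the distributional sense, and $|\dot\nu|(t)^2 = \|v_t\|_{\nu_t,G}^2 := \int \langle G^{-1} \text{-dual pairing}\rangle$; more precisely, using the metric tensor $G$ with $A = G^{-1}$, the minimal kinetic energy representation gives $|\dot\nu|(t)^2 = \|\partial_t\nu_t\|_{\nu_t,*}^2$ in the dual-norm notation of \eqref{def_dual_norm}, where the continuity equation identifies $\partial_t\nu_t$ with the linear functional $h \mapsto \int \langle A\nabla h, \text{(momentum)}\rangle$. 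This is precisely the content of the metric-velocity theory in [AGS], transported to the Riemannian setting by [L], so I would cite it rather than reprove it.

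Next I would compute the chain rule for the entropy along the curve. Writing $\nu_t = f_t \mu$, one has formally $\frac{d}{dt}\Ent_\mu(\nu_t) = \int (\log f_t + 1)\,\partial_t f_t\, d\mu = \int (\log f_t)\, \partial_t\nu_t$ (the constant integrates to zero since $\partial_t\nu_t$ has zero mass), so the ``gradient'' of $\Ent_\mu$ at $\nu_t$ is represented by the function $\nabla \log f_t = \nabla\nu_t/\nu_t + \nabla H$ when $\mu = e^{-H}$; one then recognizes that $g(\nu_t)^2$ as defined in \eqref{def_upper_gradient} is exactly the squared $\|\cdot\|_{\nu_t}$-norm of this gradient, i.e. $g(\nu_t)^2 = \|\nabla\log f_t\|_{\nu_t}^2$. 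With these identifications, the Cauchy--Schwarz (or Young) inequality $|\frac{d}{dt}\Ent_\mu(\nu_t)| \le \tfrac12 g(\nu_t)^2 + \tfrac12|\dot\nu|(t)^2$ always holds (this is essentially Proposition above, the upper-gradient property), and equality in \eqref{def_gf} forces, for a.e.\ $t$, that the velocity field $v_t$ be aligned with $-A\nabla\log f_t$, i.e. $v_t = -A\nabla\log f_t$ in $L^2(\nu_t)$. Plugging this into the continuity equation $\partial_t\nu_t = -\nabla\cdot(\nu_t v_t)$ yields $\partial_t(f_t\mu) = \nabla\cdot(f_t\mu\, A\nabla\log f_t) = \nabla\cdot(A(\nabla f_t)\mu)$, using $f_t\mu\nabla\log f_t = \mu\nabla f_t + f_t\mu\nabla H \cdot(\text{sign})$ and $\mu = e^{-H}$ so that $\mu\nabla(\cdot) - (\cdot)\mu\nabla H = \nabla((\cdot)\mu)$ collapses correctly; this gives the weak PDE. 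For the converse, given a weak solution $f_t$ of the PDE, I would run the computation backwards: define $v_t := -A\nabla\log f_t$, check it solves the continuity equation, verify the integrability needed so that $g(\nu_t), |\dot\nu|(t) \in L^2_{loc}$, and observe that the chain rule combined with $v_t = -A\nabla\log f_t$ turns the Young inequality into an equality, which is \eqref{def_gf}.

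The main obstacle is regularity and the rigorous justification of the chain rule $\frac{d}{dt}\Ent_\mu(\nu_t) = \int \log f_t \, d(\partial_t\nu_t)$: the entropy is only lower semicontinuous and may be infinite, $f_t$ need not be smooth or bounded away from $0$, and $\log f_t$ is not a legitimate test function. The standard remedy is a regularization/truncation argument — mollify in space, truncate $\log$, use the finiteness of $\int_s^t g(\nu_r)|\dot\nu|(r)\,dr$ from the upper-gradient Proposition to control error terms, and pass to the limit using lower semicontinuity of $g$ (which relies on assumption \eqref{assump-tensor2}) and of the metric derivative, together with the uniform ellipticity \eqref{assump-tensor1} to keep all norms equivalent to the flat ones. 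Since [AGS] and [L] carry this out in precisely this generality, I would structure the proof as: (i) state the metric-derivative representation from [L]; (ii) state the rigorous chain rule from [L]; (iii) perform the (now purely algebraic) manipulation identifying the equality case of Young's inequality with the PDE; (iv) note that each implication is reversible. The genuinely new content is step (iii), which is a short computation; steps (i)--(ii) are quotations.
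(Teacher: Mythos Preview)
Your outline is correct and is essentially the argument of [AGS] and its Riemannian extension [L]: represent the metric derivative via the continuity equation and a minimal velocity field, identify $g(\nu_t)^2$ as the $\|\cdot\|_{\nu_t}$-norm of $\nabla\log f_t$, derive the chain rule for the entropy, and read off the PDE as the equality case in Young's inequality. Note, however, that the paper does \emph{not} give its own proof of this proposition: it simply cites [L] (after recalling that the Euclidean case is in [AGS]). So your proposal is not so much a different route as a fleshing-out of the cited result; your own assessment that ``steps (i)--(ii) are quotations'' and only the algebraic identification in (iii) is new is accurate, and in fact even (iii) is contained in [L].
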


As a consequence of this result and Ito's formula, we also have a representation of gradient flows as the flow of laws of the solution to a SDE:

\begin{prop} If $(\nu_t)_t$ is a gradient flow of $\Ent_{\mu}$ with $\mu = \exp(-H)dx$, then it is the flow of marginals of a solution of the SDE
\begin{equation} \label{sde}
dX_t = -A(X_t)\nabla H(X_t)dt + \text{div}(A)(X_t) \sqrt{2A} \hspace{1mm} dB_t
\end{equation}
with initial condition $X_0$ that has law $\nu_0$. 

\end{prop}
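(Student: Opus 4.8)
The plan is to combine the characterization of gradient flows from the preceding proposition with an explicit identification of the associated parabolic equation as the forward Kolmogorov equation of the SDE (\ref{sde}), and then to transfer the conclusion by a uniqueness argument. \emph{Step 1: from gradient flow to a Fokker--Planck equation.} By the preceding proposition, saying that $(\nu_t)_{t\in[0,T]}$ is a gradient flow of $\Ent_\mu$ is equivalent to saying that the densities $f(t,\cdot)=d\nu_t/d\mu$ form a weak solution of $\partial_t(f\mu)=\nabla\cdot(A(\nabla f)\mu)$ with $A=G^{-1}$. Write $\rho_t:=f_t\mu$ for the density of $\nu_t$ with respect to Lebesgue measure. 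Since $\mu=e^{-H}dx$ gives $(\nabla f_t)e^{-H}=\nabla\rho_t+\rho_t\nabla H$, the curve $\rho_t$ solves
\[
\partial_t\rho_t=\nabla\cdot\big(A\nabla\rho_t+\rho_t\,A\nabla H\big).
\]
Expanding $\nabla\cdot(A\nabla\rho_t)$ by the product rule and using the symmetry of $A$ together with the definition of $\text{div}(A)$ recalled in the Notations (so that $\nabla\cdot(A\nabla\rho)=\text{div}(A)\cdot\nabla\rho+\sum_{i,j}A_{ij}\partial_{ij}\rho$, while $\sum_{i,j}\partial_{ij}(A_{ij}\rho)=(\nabla\cdot\text{div}(A))\rho+2\,\text{div}(A)\cdot\nabla\rho+\sum_{i,j}A_{ij}\partial_{ij}\rho$), this rewrites as
\[
\partial_t\rho_t=-\nabla\cdot\big(\rho_t\,b\big)+\sum_{i,j}\partial_{ij}\big(A_{ij}\rho_t\big),\qquad b:=-A\nabla H+\text{div}(A),
\]
which is precisely the forward Kolmogorov equation of the diffusion with drift $b$ and diffusion matrix $2A=\sqrt{2A}\,(\sqrt{2A})^{\mathsf T}$ (the square root being well defined since $A$ is symmetric positive by (\ref{assump-tensor1})), i.e.\ of the SDE (\ref{sde}).

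\emph{Step 2: transfer via Itô and uniqueness.} Conversely, if $X$ is a (weak) solution of (\ref{sde}) with $\mathrm{Law}(X_0)=\nu_0$, then applying Itô's formula to $\varphi(X_t)$ for $\varphi\in C_c^\infty(\R^n)$ and taking expectations shows that $t\mapsto\mathrm{Law}(X_t)$ is a weak solution of the same Fokker--Planck equation, with the same initial datum $\nu_0$. It then suffices to know that this equation admits a unique weak solution in the relevant class of absolutely continuous curves of measures, which forces $\mathrm{Law}(X_t)=\nu_t$ for all $t$. Under (\ref{assump-tensor1})--(\ref{assump-tensor2}) the matrix $A$ is bounded and uniformly elliptic; assuming in addition enough regularity (e.g.\ $A$ Lipschitz, so that $\text{div}(A)$ makes classical sense and $b$, $\sqrt{2A}$ have at most linear growth, and $H$ smooth), the SDE and the martingale problem for its generator are well posed, and the required uniqueness follows. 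Alternatively, one may avoid constructing $X$ by hand and instead invoke a superposition principle in the spirit of Ambrosio--Figalli--Trevisan: a distributional solution $(\nu_t)$ of the Fokker--Planck equation satisfying $\int_0^T\!\!\int(|b|^2+\|A\|)\,d\nu_t\,dt<\infty$ is automatically the family of time-marginals of a solution to the martingale problem for the generator of (\ref{sde}), which produces the desired diffusion directly.

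The main obstacle is Step 2 --- promoting a \emph{weak} solution of a possibly low-regularity parabolic equation to the flow of marginals of an honest (weak) solution of an SDE. Within the hypotheses stated so far ($A$ only bounded, elliptic, and with lower semicontinuous entries) this is not automatic: one must either strengthen the assumptions on $G$ (and on $H$) so as to land in a well-posedness class for (\ref{sde}) and to give $\text{div}(A)$ a meaning, or else rely on a superposition/uniqueness theorem tailored to Fokker--Planck equations with rough coefficients. By contrast, Step 1 --- the algebraic manipulation turning $\nabla\cdot(A(\nabla f)\mu)$ into divergence-plus-double-divergence form --- and the Itô computation in Step 2 are entirely routine.
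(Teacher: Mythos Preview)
Your proposal is correct and follows exactly the route the paper indicates: the paper does not give a proof but simply asserts that the proposition is ``a consequence of this result and It\^o's formula,'' which is precisely your Step~1 (rewriting the PDE $\partial_t(f\mu)=\nabla\cdot(A(\nabla f)\mu)$ as the Fokker--Planck equation of (\ref{sde})) combined with your Step~2 (It\^o plus uniqueness). Your honest remarks about the regularity needed to make Step~2 rigorous go beyond what the paper says; the paper treats this as routine and does not address well-posedness of the SDE or uniqueness for the Fokker--Planck equation at all.
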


\begin{rmq} 
Diffusion processes that can be written in the form (\ref{sde}) are necessarily reversible.
\end{rmq}

We shall now give the definition of a key functional, which allows us to characterize gradient flows:

\begin{prop}
Let 
\begin{equation} \label{def_j}
J((\nu_t)_t) := H_{\mu}(\nu_0) - H_{\mu}(\nu_T) - \frac{1}{2}\int_0^T{g(\nu_t)^2 + |\dot{\nu}|(t)^2 dt}.
\end{equation}
Then $(\nu_t)_{t \in [0,T]}$ is a gradient flow of the functional $H_{\mu}$ iff $J((\nu_t)_t) = 0$.
\end{prop}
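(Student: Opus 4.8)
The plan is to establish the slightly stronger fact that $J((\nu_t)_t)\le 0$ for \emph{every} absolutely continuous curve $(\nu_t)_{t\in[0,T]}$, with equality holding exactly when \eqref{def_gf} does; the proposition then follows at once. The only ingredients are the preceding proposition (that $g$ is an upper gradient for $\Ent_\mu=H_\mu$, i.e.\ $|H_\mu(\nu_t)-H_\mu(\nu_s)|\le\int_s^t g(\nu_r)\,|\dot\nu|(r)\,dr$ for $s\le t$), Young's inequality $ab\le\frac12 a^2+\frac12 b^2$, and the elementary observation that if a finite sum of inequalities is an equality, then every one of them is an equality. Throughout we assume $H_\mu(\nu_0),H_\mu(\nu_T)$ are finite and $\int_0^T\big(g(\nu_t)^2+|\dot\nu|(t)^2\big)\,dt<\infty$, so that $J((\nu_t)_t)$ is a well-defined real number; when this fails the statement reduces to routine bookkeeping, so there is nothing to prove.

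Applying the upper gradient inequality on the whole interval $[0,T]$ and then Young's inequality pointwise to the integrand gives
\[
H_\mu(\nu_0)-H_\mu(\nu_T)\ \le\ \int_0^T g(\nu_t)\,|\dot\nu|(t)\,dt\ \le\ \frac12\int_0^T\big(g(\nu_t)^2+|\dot\nu|(t)^2\big)\,dt ,
\]
that is, $J((\nu_t)_t)\le 0$. For the implication $(\Rightarrow)$: if $(\nu_t)_t$ is a gradient flow, then $t\mapsto H_\mu(\nu_t)$ is absolutely continuous — it is controlled by $\int_s^t g(\nu_r)\,|\dot\nu|(r)\,dr$, which is finite — so \eqref{def_gf} may be integrated over $[0,T]$, yielding $H_\mu(\nu_T)-H_\mu(\nu_0)=-\frac12\int_0^T\big(g(\nu_t)^2+|\dot\nu|(t)^2\big)\,dt$, i.e.\ $J((\nu_t)_t)=0$. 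For $(\Leftarrow)$: assume $J((\nu_t)_t)=0$ and fix $0\le s\le t\le T$. Writing $H_\mu(\nu_0)-H_\mu(\nu_T)$ as the telescoping sum of $H_\mu(\nu_0)-H_\mu(\nu_s)$, $H_\mu(\nu_s)-H_\mu(\nu_t)$, $H_\mu(\nu_t)-H_\mu(\nu_T)$ and bounding each term (using $H_\mu(\nu_a)-H_\mu(\nu_b)\le|H_\mu(\nu_a)-H_\mu(\nu_b)|$ together with the upper gradient and Young inequalities on the corresponding subinterval), the three bounds sum to $\frac12\int_0^T\big(g^2+|\dot\nu|^2\big)=H_\mu(\nu_0)-H_\mu(\nu_T)$, so each inequality is an equality. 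In particular $H_\mu(\nu_s)-H_\mu(\nu_t)=\frac12\int_s^t\big(g(\nu_r)^2+|\dot\nu|(r)^2\big)\,dr$ for all $0\le s\le t\le T$; hence $t\mapsto H_\mu(\nu_t)=H_\mu(\nu_0)-\frac12\int_0^t\big(g^2+|\dot\nu|^2\big)$ is absolutely continuous, and differentiating in $t$ gives \eqref{def_gf} for a.e.\ $t$, so $(\nu_t)_t$ is a gradient flow.

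The argument is soft and I do not anticipate a genuine obstacle. The only points needing some care are the measurability in $t$ of $g(\nu_t)$ and $|\dot\nu|(t)$, so that the action integral is well defined (standard for absolutely continuous curves, cf.\ [AGS]), and the initial reduction to the case where $H_\mu(\nu_0)$, $H_\mu(\nu_T)$ and $\int_0^T\big(g^2+|\dot\nu|^2\big)$ are all finite. As a byproduct, since equality in Young's inequality $ab\le\frac12 a^2+\frac12 b^2$ forces $a=b$, the same computation shows that along a gradient flow $g(\nu_t)=|\dot\nu|(t)$ for a.e.\ $t\in[0,T]$, the identity ``metric speed equals length of the gradient'' that underlies the identification with the parabolic PDE in the next proposition.
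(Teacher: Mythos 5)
The paper states this proposition without proof; it is a standard fact in the De~Giorgi--Ambrosio--Gigli--Savar\'e framework (cf.\ [AGS, Remark 1.3.3]), so there is no paper argument against which to compare. Your proof is correct and is the canonical one: the chain ``upper gradient inequality, then Young'' gives $J\le 0$ for every absolutely continuous curve; the $(\Rightarrow)$ direction integrates the defining identity once absolute continuity of $t\mapsto H_\mu(\nu_t)$ is secured from the upper-gradient bound; and the $(\Leftarrow)$ direction uses the telescoping/equality-case argument to upgrade the global identity $J=0$ to the a.e.\ pointwise one. The byproduct $g(\nu_t)=|\dot\nu|(t)$ for a.e.\ $t$ is also correctly extracted from the equality case in Young.

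The one soft spot is the opening dismissal of the non-finite cases as ``routine bookkeeping.'' Since $J$ may fail to be a well-defined real number when $H_\mu(\nu_0)$, $H_\mu(\nu_T)$, or $\int_0^T(g^2+|\dot\nu|^2)$ is infinite, the iff only has content once these are pinned down, and in the $(\Rightarrow)$ direction you should at least note why a gradient flow in the paper's sense (a.e.\ differentiability of $t\mapsto H_\mu(\nu_t)$ along an absolutely continuous curve) forces finiteness of the three quantities: the a.e.\ derivative requires $H_\mu(\nu_t)$ finite a.e., the upper-gradient bound then propagates finiteness to the endpoints, and the dissipation integral is controlled by the total decrease of $H_\mu(\nu_t)$. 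This is a matter of polish rather than a gap in the argument.
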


\begin{rmq}
In this setting, we have the following alternate formulation for the functional $J$:

\begin{equation}
J((\nu_t)_t) = \frac{1}{2}\int_0^T{||\dot{\nu} - \nabla \cdot(A (\nabla f)\mu) ||_{\nu, *}^2dt},
\end{equation}
at least for smooth functions. This formulation may seem more convenient, but in this context, (\ref{def_j}) will be easier to manipulate. 
\end{rmq}

\begin{rmq}
We can make an interpretation of the notion of gradient flows in a statistical physics framework. It is a well-known principle in equilibrium statistical physics that steady states can be identified as minimizers of a thermodynamic functional, such as free energy, as a consequence of the second principle of thermodynamics. Here it is the relative entropy $\Ent_{\mu}$ which plays the role of free energy, and indeed it minimizer is the equilibrium state $\mu$. The gradient flow formulation identifies the correct trajectory as the minimizer of some action functional. This can be seen as an extension of the minimization principle to non-equilibrium statistical physics, with correct trajectories being those that decrease the free energy as fast as possible.
\end{rmq}

\subsection{Relative entropy and large deviations}

In this section, we introduce the notions of relative entropy and large deviations, and the links between the two. 

\begin{defn} [Relative entropy]
Given two probability measures $P$ and $Q$ on a Polish space $X$, the relative entropy of $P$ with respect to $Q$ is given by
$$H(P;Q) := \underset{f \in C_b(X)}{\sup} \hspace{1mm} \mathbb{E}_P(f) - \log \mathbb{E}_Q(e^f).$$
\end{defn}

The following result is well-known, and is obtained by a computation of the Legendre transform (see Lemma 6.2.13 in  [DZ]).

\begin{prop}
We have 
$$H(P;Q) = \mathbb{E}_P \left[ \ln \left(\frac{dP}{dQ}\right) \right]$$
if $P$ is absolutely continuous with respect to $Q$, and $H(P;Q) = +\infty$ if not.
\end{prop}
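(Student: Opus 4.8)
The plan is to compute the supremum defining $H(P;Q)$ directly, splitting into the absolutely continuous and non-absolutely-continuous cases. First I would treat the case $P \ll Q$. Writing $\varphi := dP/dQ$, I want to show the supremum over $f \in C_b(X)$ of $\mathbb{E}_P(f) - \log \mathbb{E}_Q(e^f)$ equals $\int \varphi \log \varphi \, dQ$. For the lower bound, the natural candidate is $f = \log \varphi$: formally this gives $\mathbb{E}_P(\log\varphi) - \log\mathbb{E}_Q(\varphi) = \mathbb{E}_P(\log\varphi) - \log 1 = \mathbb{E}_P(\log\varphi)$, which is exactly the claimed value. Since $\log\varphi$ need not be bounded or continuous, I would approximate: truncate to $f_n := (\log\varphi) \wedge n \vee (-n)$ to get boundedness, then mollify/approximate in $C_b(X)$ using density of continuous functions in $L^1(P) \cap L^1(Q)$-type arguments (or invoke that $C_b$ is sufficient here because $X$ is Polish), controlling $\mathbb{E}_P(f_n)$ by monotone/dominated convergence and $\log\mathbb{E}_Q(e^{f_n}) \to \log\mathbb{E}_Q(\varphi) = 0$ likewise. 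For the upper bound, for any $f \in C_b(X)$ I would use the elementary inequality $ab \le a\log a - a + e^b$ (Young's inequality for the pair $t\mapsto t\log t$ and $t \mapsto e^t$) with $a = \varphi$, $b = f$: integrating against $Q$ gives $\mathbb{E}_P(f) = \int f\varphi\,dQ \le \int \varphi\log\varphi\,dQ - 1 + \mathbb{E}_Q(e^f)$; then a scaling/centering trick (replacing $f$ by $f + c$ and optimizing over the constant $c$, or noting $\log\mathbb{E}_Q(e^f) = \sup_{\lambda}(\lambda - \text{something})$) upgrades this to $\mathbb{E}_P(f) - \log\mathbb{E}_Q(e^f) \le \int\varphi\log\varphi\,dQ$.

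For the case $P \not\ll Q$, I would exhibit a sequence $f_n \in C_b(X)$ along which $\mathbb{E}_P(f_n) - \log\mathbb{E}_Q(e^{f_n}) \to +\infty$. Since $P$ is not absolutely continuous with respect to $Q$, there is a Borel set $B$ with $Q(B) = 0$ but $P(B) = \delta > 0$. Using regularity of Borel measures on a Polish space, I would approximate $\mathbbm{1}_B$ from the outside by open sets and from inside by closed sets to produce continuous functions $g_n$ with $0 \le g_n \le 1$, $\mathbb{E}_P(g_n) \ge \delta/2$ say, and $\mathbb{E}_Q(g_n) \to 0$. Then take $f_n = t\, g_n$ with $t \to \infty$: one checks $\mathbb{E}_P(f_n) \ge t\delta/2$ while $\mathbb{E}_Q(e^{t g_n}) \le 1 + (e^t - 1)\mathbb{E}_Q(g_n)$, so choosing the growth of $t$ slow relative to the decay of $\mathbb{E}_Q(g_n)$ keeps $\log\mathbb{E}_Q(e^{f_n})$ bounded (or growing slower than $t\delta/2$), forcing the supremum to be $+\infty$.

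The main obstacle is the approximation argument in the absolutely continuous case: the optimal test function $\log(dP/dQ)$ is generally neither bounded nor continuous, so both directions of the identity require care in passing to the limit through $C_b(X)$, and one must ensure the entropy integral $\int \varphi\log\varphi\,dQ$ is well-defined in $[-\infty, +\infty]$ — here the bound $\varphi\log\varphi \ge -e^{-1}$ guarantees the negative part is integrable, so the integral makes sense, possibly $+\infty$, and the truncation arguments go through by monotone convergence on the positive part. Since this is a standard computation of a Legendre transform, I would note that the full details are in Lemma 6.2.13 of [DZ] and only indicate the structure above.
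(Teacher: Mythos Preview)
Your proposal is correct and aligns with the paper's own treatment: the paper does not give a proof at all but simply states that the result ``is well-known, and is obtained by a computation of the Legendre transform (see Lemma 6.2.13 in [DZ]).'' Your sketch is exactly that computation --- Young/Jensen for the upper bound, truncation of $\log(dP/dQ)$ for the lower bound, and a singular-set argument for the non-absolutely-continuous case --- and you even close with the same reference to [DZ], so there is nothing to compare.
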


We now define large deviations: 

\begin{defn}
Let $I$ be a lower semicontinuous, nonnegative function on a Polish space $X$ and $(a_n)_{n \in \N}$ a sequence of increasing, positive real numbers that goes to infinity. A sequence of probability measures $\mathbb{P}_n$ on $X$ is said to satisfy a large deviation principle with speed $(a_n)_n$ and good rate function $I$ iff

(i) For any closed set $F$, $\limsup a_n^{-1}\log \mathbb{P}_n(F) \leq -\underset{x \in F}{\inf} \hspace{1mm} I(x);$

(ii) For any open set $O$, $\liminf a_n^{-1}\log \mathbb{P}_n(O) \geq -\underset{x \in O}{\inf} \hspace{1mm} I(x).$
\end{defn}

Informally, this definition means that $\mathbb{P}_n(X_n \approx x) \approx \exp(-a_n I(x))$. We refer to the textbook [DZ] for an introduction to large deviations.

In the recent contribution [Ma], Mariani proved the equivalence between large deviation principles for sequences of probability measures and the Gamma convergence (which we define below) of the associated relative entropy functionals.

\begin{defn}[Gamma convergence]
Let $X$ be a space endowed with a notion of convergence. A sequence $(I_n)$ of functionals on $X$ is said to $\Gamma$-converge to a functional $I$ at point $x \in X$ if the two following conditions are met: 

(i) For any sequence $(x_n)$ that converges to $x$, we have $\underset{n \rightarrow \infty}{\liminf } I_n(x_n) \geq I(x)$;

(ii) There exists a sequence $(x_n)$ that converges to $x$ such that $\underset{n \rightarrow \infty}{\lim } I_n(x_n) = I(x)$.

The sequence $(I_n)$ is said to $\Gamma$-converge to $I$ if it $\Gamma$-converges to $I$ at every point.
\end{defn}

Before we give a statement of Mariani's result, we define the notion of exponential tightness : 

\begin{defn}[Exponential tightness]
A sequence of probability measures $(\mu_n)$ on a topological space $X$ is said to be exponentially tight with speed $(a_n)$ if, for any $\alpha > 0$, there exists a compact set $K_{\alpha}$ such that 
$$\underset{n}{\limsup} \hspace{1mm} \frac{1}{a_n} \log \mu_n(K_{\alpha}^c) \leq -\alpha.$$
\end{defn}

\noindent Mariani's result can be stated as follows : 

\begin{thm}[Ma, 2012] \label{thm_mariani}
Let $(\mu_n)$ be a sequence of probability measures on a Polish space $X$,  $(a_n)$ a sequence of positive real numbers such that $\underset{n}{\lim} \hspace{1mm} a_n = +\infty$ and $I : X \rightarrow [0,+\infty]$ a measurable, lower semicontinuous functional. We endow the space of probability measures with the topology of weak convergence. 

(i) The sequence $(\mu_n)$ satisfies a large deviations upper bound with speed $(a_n)$ and rate function $I$ iff it is exponentially tight with speed $(a_n)_n$ and if for any sequence $(\nu_n)$ of probability measures on $X$ that weakly converges to a Dirac measure $\delta_x$, we have
$$\liminf \frac{1}{a_n}H(\nu_n, \mu_n) \geq I(x);$$

(ii) The sequence $(\mu_n)$ satisfies a large deviations upper bound with speed $(a_n)$ and rate function $I$ iff for any point $x$, there exists a sequence $(\nu_n)$ of probability measures on $X$ that weakly converges to $\delta_x$ such that
$$\underset {n \longrightarrow \infty}{\limsup} \hspace{1mm} \frac{1}{a_n}H(\nu_n, \mu_n) \leq I(x).$$

\end{thm}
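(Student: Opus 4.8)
The engine of the proof is the variational (Donsker--Varadhan) characterization of relative entropy recalled above, $H(P;Q) = \sup_{f\in C_b(X)}\{\mathbb{E}_P(f) - \log\mathbb{E}_Q(e^f)\}$, which lets one pass back and forth between large-deviation estimates and entropy estimates. Two elementary facts do most of the work. First, for an open set $U$ with $\mu_n(U)>0$, conditioning gives the identity $H\big(\mu_n(\,\cdot\mid U);\mu_n\big) = \log\big(1/\mu_n(U)\big)$. Second, testing the variational formula against $f=\lambda\mathbbm{1}_A$ yields, for every measurable $A$, the bound $H(P;Q)\ge \lambda P(A) - \log\big(1+(e^\lambda-1)Q(A)\big)$. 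Coupled with a diagonal device --- condition $\mu_n$ on a shrinking neighbourhood basis $B(x,1/k)$ and let $k\to\infty$ along a suitable subsequence of indices $n$ --- these convert freely between probability estimates for $\mu_n$ and recovery sequences $\nu_n\to\delta_x$.

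For part (i), in the direction ``$\liminf$ condition $\Rightarrow$ upper bound'' I would first reduce to the local upper bound $\inf_{U\ni x}\limsup_n a_n^{-1}\log\mu_n(U)\le -I(x)$, the passage from local to closed sets being the standard finite-subcover argument that consumes the exponential tightness. The local bound follows by contradiction: if it failed at some $x$, the diagonal conditioning device would produce $\nu_n\to\delta_x$ with $\liminf_n a_n^{-1}H(\nu_n;\mu_n)$ strictly below $I(x)$. Conversely, starting from the upper bound and given any $\nu_n\to\delta_x$, I plug the rescaled test functions $a_n f$ ($f\in C_b(X)$) into the variational formula to obtain $a_n^{-1}H(\nu_n;\mu_n)\ge \int f\,d\nu_n - a_n^{-1}\log\int e^{a_n f}\,d\mu_n$; here the first term tends to $f(x)$ by weak convergence and the $\limsup$ of the second is at most $\sup_y\big(f(y)-I(y)\big)$ by the elementary (bounded-$f$) half of Varadhan's lemma, which uses only the closed-set upper bound. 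Optimizing over $f\in C_b(X)$ concludes, provided one has the reconstruction identity $\sup_{f\in C_b(X)}\big(f(x)-\sup_y(f(y)-I(y))\big)=I(x)$ valid for lower semicontinuous $I\ge 0$.

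For part (ii), assume the lower bound; then $\liminf_n a_n^{-1}\log\mu_n(B(x,1/k))\ge -I(x)$, so the conditioning identity makes $\mu_n(\,\cdot\mid B(x,1/k))$ an approximation of $\delta_x$ of entropy $\le a_n(I(x)+o_k(1))$, and diagonalizing yields the required $\nu_n\to\delta_x$ with $\limsup_n a_n^{-1}H(\nu_n;\mu_n)\le I(x)$ (the case $I(x)=+\infty$ being trivial). Conversely, given such a sequence and a fixed open $O\ni x$, I apply the indicator bound with $A=O$ and $\lambda = a_n(I(x)+1)$: since $\nu_n(O)\to 1$ and $H(\nu_n;\mu_n)\le a_n(I(x)+\varepsilon)$ eventually, the exponent $\lambda\nu_n(O)-H(\nu_n;\mu_n)$ is of order $a_n$, and solving the inequality for $\mu_n(O)$ gives $\liminf_n a_n^{-1}\log\mu_n(O)\ge -I(x)-O(\varepsilon)$; letting $\varepsilon\downarrow 0$ and taking the supremum over $x\in O$ gives the lower bound.

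The step I expect to be most delicate is the reconstruction identity used in part (i): it is a Fenchel--Moreau-type duality for the pairing of $\mathcal{P}(X)$ with $C_b(X)$, and it is exactly what makes the $\liminf$ inequality sharp. Its proof uses lower semicontinuity of $I$ to approximate $I$ from below by an increasing sequence of bounded Lipschitz functions $g_k$ (truncated Moreau--Yosida regularizations), so that $f=g_k-g_k(x)$ is a near-optimal competitor when $I(x)<\infty$, a separate bump-function argument treating the case $I(x)=+\infty$. Everything else is routine bookkeeping with weak convergence and the variational formula; the only other point requiring attention is tracking where exponential tightness enters, namely solely in the passage from the local to the global upper bound in part (i).
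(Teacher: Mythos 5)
The paper does not actually prove this theorem: it is quoted from Mariani [Ma] and the text offers only a heuristic explanation via the Bryc--Varadhan correspondence between exponential moments and relative entropy. So there is no ``paper proof'' to compare yours against; I am evaluating your sketch on its own merits.

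Your reconstruction follows the right lines and is essentially the argument of [Ma]: the conditioning identity $H(\mu_n(\cdot\mid U);\mu_n)=-\log\mu_n(U)$ converts probability estimates into entropy estimates and back; rescaled test functions $a_n f$ in the Donsker--Varadhan variational formula, combined with the bounded-$f$ upper half of Varadhan's lemma, give the direction ``upper bound $\Rightarrow$ $\Gamma$-liminf''; the Fenchel--Moreau reconstruction $\sup_{f\in C_b}\bigl(f(x)-\sup_y(f(y)-I(y))\bigr)=I(x)$ is correct for lsc $I\ge0$ and your monotone-approximation argument proves it; and the indicator bound $H(\nu_n;\mu_n)\ge\lambda\nu_n(O)-\log\bigl(1+(e^\lambda-1)\mu_n(O)\bigr)$ with $\lambda\sim a_n$ delivers the lower bound from a recovery sequence. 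You also correctly read part (ii) as being about the LDP \emph{lower} bound, repairing an evident typo in the statement.

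There is one genuine mismatch you should flag rather than paper over. As literally written, part (i) is an ``iff'' between ``LDP upper bound'' and ``exponential tightness \emph{and} $\Gamma$-liminf inequality''. Taking it at face value, the forward implication requires ``LDP upper bound $\Rightarrow$ exponential tightness'', which your proof does not attempt and which is \emph{false} in general: an upper bound alone, even with lsc rate function, does not force exponential tightness (one needs at least the full LDP with a good rate function, and even then it is a separate argument on Polish spaces). In Mariani's actual theorem this is handled by placing an equicoercivity hypothesis on the normalized entropy functionals $a_n^{-1}H(\cdot;\mu_n)$ that mirrors exponential tightness of $(\mu_n)$, so tightness effectively sits on both sides of the equivalence. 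Your proof, which establishes ``(exp.\ tight $+$ $\Gamma$-liminf) $\Rightarrow$ upper bound'' and ``upper bound $\Rightarrow$ $\Gamma$-liminf'', is exactly what is true; the remaining implication should be stated as an extra hypothesis, not deduced. Beyond that, the only places needing more care than your sketch admits are (a) the diagonalization in the contrapositive of the local upper bound, where one must handle indices $n$ at which $\mu_n(B(x,1/k))=0$, and (b) ensuring the recovery sequence is defined for \emph{all} $n$, not just along the selected subsequence --- both routine, and consistent with your ``routine bookkeeping'' caveat.
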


A heuristic explanation of Theorem \ref{thm_mariani} can be made in terms of the Bryc-Varadhan theorem (see sections 4.3 and 4.4 in [DZ]). One can relate the relative entropy functional and exponential moments of functions by the relation
$$\int{\exp(f)d\mu} = \underset{\nu \in \mathcal{P}(X)}{\sup} \int{fd\nu} - \Ent_{\mu}(\nu).$$
Since the Bryc-Varadhan lemma states that we can understand the large deviations for sequences of measures by looking at $\frac{1}{a_n}\log \int{\exp(a_n f)d\mu_n}$ for bounded continuous functions, the above relation translates the problem to investigating the behavior of the sequence of relative entropy functionals.

\begin{rmq} \label{rq1} For the lower bound, it is enough to check the existence of a recovery sequence for every point $y$ in a subset $Y$ of $X$, such that given $x \in X$, there exists a sequence $(y_k)$ of elements of $Y$ that converges to $x$, and such that $I(y_k)$ converges to $I(x)$.
\end{rmq}

\subsection{Relative entropy for the law of processes}

Our first result is a relation between relative entropy with respect to the law of a solution of (\ref{sde}) and the functional $J$ associated to the gradient flow formulation  of the flow of marginals. It is a generalization of Theorem (1.31) in part II of [Fo] (which dealt with the case of independent Brownian motions).

\begin{thm} \label{min_ent} 
Let $Q$ be the law of a solution to a SDE of type (\ref{sde}) on a space $\mathbb{R}^{d}$, and $P$ the law of a process with finite relative entropy with respect to $Q$, with flow of marginals $(\nu_t)$. Then:

(i) We have the lower bound 

$$H(P;Q) \geq H(P_0,Q_0) + \frac{1}{2}J((\nu_t))$$
where $P_0$ and $Q_0$ are the laws of the initial conditions.

(ii) There exists a process with law $\tilde{P}$ that has the same flow of marginals as $P$, such that 

$$H(\tilde{P}, Q) = H(P_0,Q_0) + \frac{1}{2}J((\nu_t)).$$
\end{thm}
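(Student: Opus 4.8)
The plan is to work via the Girsanov formula for the relative entropy of path measures, and then to reduce the resulting "dynamical" entropy to the functional $J$ by an optimization over drifts at the level of marginals. Since $Q$ is the law of a solution of \eqref{sde}, any $P\ll Q$ on path space can be realized (after enlarging the probability space if necessary) as the law of a solution of an SDE with the same diffusion coefficient $\sqrt{2A}$ but a modified drift $b_t(x) = -A(x)\nabla H(x) + \mathrm{div}(A)(x) + A(x)c_t(x)$ for some adapted process $c$. The Girsanov/Cameron--Martin formula then gives the classical decomposition
$$
H(P;Q) = H(P_0;Q_0) + \mathbb{E}_P\left[\frac{1}{2}\int_0^T \langle A(X_t)c_t(X_t), c_t(X_t)\rangle\, dt\right],
$$
the first term being the entropy of the initial laws and the integral being the relative entropy "added along the paths". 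This is exactly the step that specializes Theorem (1.31) of [Fo]; I would cite Girsanov's theorem and a standard lower-semicontinuity/approximation argument to justify it for all $P$ of finite entropy, not just those with smooth bounded $c$.

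Next I would push this forward to the marginals. Writing $\nu_t = \mathrm{law}(X_t)$ under $P$ with density $f_t\mu$, the forward Kolmogorov equation for $(\nu_t)$ reads $\partial_t \nu_t = -\mathrm{div}(\nu_t \bar b_t) + \tfrac12 \sum_{ij}\partial_{ij}(2A_{ij}\nu_t)$ where $\bar b_t(x) = \mathbb{E}_P[b_t(X_t)\mid X_t = x]$ is the conditional expectation of the drift; after the usual rewriting this becomes a continuity equation $\partial_t \nu_t = -\mathrm{div}(\nu_t v_t)$ with velocity field $v_t = A(\bar c_t - \nabla f_t/f_t - \nabla H)$, where $\bar c_t(x) := \mathbb{E}_P[c_t(X_t)\mid X_t=x]$. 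By conditional Jensen, $\mathbb{E}_P\int\langle A c_t,c_t\rangle dt \ge \mathbb{E}_P\int \langle A\bar c_t,\bar c_t\rangle dt = \int_0^T \|\bar c_t\|_{\nu_t}^2\,dt$, with equality precisely when $c_t$ is already $\sigma(X_t)$-measurable — this is where the modified process $\tilde P$ in part (ii) comes from: run the SDE with the closed-loop drift $-A\nabla H + \mathrm{div}(A) + A\bar c_t$, which has the same marginals and for which the inequality is an equality. It remains to identify $\int_0^T\|\bar c_t\|_{\nu_t}^2 dt + 2\bigl(H_\mu(\nu_0)-H_\mu(\nu_T)\bigr)$, or rather $\tfrac12$ of the path entropy, with $H_\mu(\nu_0)-H_\mu(\nu_T)+\tfrac12 J$; this is a computation combining the chain rule $\frac{d}{dt}H_\mu(\nu_t) = \int \langle \nabla\log f_t, v_t\rangle\,d\nu_t$ for the entropy along the continuity equation, the definitions \eqref{def_dual_norm}, \eqref{def_upper_gradient} of $g$ and the metric derivative $|\dot\nu|$ (via the Benamou--Brenier representation $|\dot\nu|(t)^2 = \|v_t\|_{\nu_t,*}^2$ for the minimal velocity), and completing the square: $\|\bar c_t\|_{\nu_t}^2 = \|\bar c_t - (\nabla f_t/f_t + \nabla H)\|_{\nu_t}^2 + \ldots$ reorganizes exactly into $g(\nu_t)^2 + |\dot\nu|(t)^2 - 2\frac{d}{dt}H_\mu(\nu_t)$ after integrating in $t$, yielding $\tfrac12 J((\nu_t))$.

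For part (ii), once $\tilde P$ is defined as the law of the closed-loop SDE above, the two inequalities used (conditional Jensen, and the choice of $v_t$ as the minimal-norm velocity for the continuity equation satisfied by $(\nu_t)$) are both equalities, so the chain of estimates collapses to the claimed identity; one must check that this closed-loop SDE is well-posed and indeed has $(\nu_t)$ as its flow of marginals, which follows since $(\nu_t,v_t)$ solves the continuity equation and $v_t$ is the drift of the corresponding Fokker--Planck equation. The main obstacle I anticipate is regularity: justifying the Girsanov decomposition and the chain rule for $t\mapsto H_\mu(\nu_t)$ when $P$ is merely of finite relative entropy (so $f_t$ need not be smooth and $v_t$ need only be in $L^2(\nu_t)$ in $t$), and making rigorous the conditional-expectation/continuity-equation step. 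I would handle this by a truncation and mollification argument, proving the lower bound (i) first for regular $P$ and then passing to the limit using lower semicontinuity of $J$ — guaranteed by assumptions \eqref{assump-tensor1}--\eqref{assump-tensor2} and the lower semicontinuity of entropy — and joint lower semicontinuity of the Girsanov cost, while for (ii) the explicit construction of $\tilde P$ sidesteps most of these difficulties.
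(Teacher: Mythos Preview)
Your proposal is correct and follows essentially the same route as the paper: Girsanov to get $H(P;Q)=H(P_0;Q_0)+\mathbb{E}_P\int\langle A b_t,b_t\rangle\,dt$, conditional expectation $\tilde b_t(x)=\mathbb{E}_P[b_t\mid X_t=x]$ to build the Markovian $\tilde P$ with the same marginals, Jensen for $H(P;Q)\ge H(\tilde P;Q)$, and then the Benamou--Brenier/entropy-derivative computation to identify $\int\langle A\tilde b_t,\tilde b_t\rangle\,d\nu_t\,dt$ with $\tfrac12 J((\nu_t))$. The paper simply cites [Le] and [L] for the finite-entropy Girsanov and continuity-equation facts rather than doing the mollification you sketch, and you should double-check your normalization of $c_t$ (the paper's drift perturbation is $2Ab_t$, giving $\langle Ab_t,b_t\rangle$ with no $\tfrac12$ in the entropy formula), but the architecture is the same.
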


As a direct consequence of this relation and Theorem \ref{thm_mariani}, we can use the functionals $J$ to study large deviations.

We consider a sequence of diffusion processes of the form (\ref{sde}). The parameters we allow to vary are the drift $\nabla H$, the diffusion coefficient $A$, and the dimension of the underlying space $d_n$.

To be able to state a large deviation principle for the laws of these diffusion processes, we need to embed their trajectories into a single space. We therefore implicitly assume that all the spaces $\R^{d_n}$ have been embedded into a single metric space $X$. We then endow the space $\mathcal{C}([0,T],X)$ with some topology that makes it a metric, separable space. A typical choice would be the supremum norm.

\begin{cor} \label{cor_ent}
Let $Q_n$ be the law of a stochastic differential equation of the form $\ref{sde}$, with $Q_{0,n}$ the law of the initial condition, and let $J_n$ be the functional involved in the gradient flow formulation of the flow of marginals of Proposition \ref{def_j}.
Then for any continuous trajectory $x \longrightarrow x_t$, the sequence of normalized relative entropy functionals $a_n^{-1}H(\cdot, Q_n)$ $\Gamma$-converge at point $\delta_{x}$ iff the functionals $\frac{1}{a_n}(H(\cdot,Q_{0,n}) + \frac{1}{2}J_n(\cdot))$ also do, with the same $\Gamma$-limit.

As a consequence, solutions of gradients flows satisfy a large deviations principle with speed $(a_n)$ and rate function $I$ iff the functionals $\frac{1}{a_n}(H(\cdot,Q_{0,n}) + \frac{1}{2}J_n(\cdot))$ $\Gamma$-converge to $I$ at the Dirac measure $\delta_{(x_t)}$, for every continuous trajectory $t \rightarrow x_t$, and if the sequence of laws is exponentially tight.

\end{cor}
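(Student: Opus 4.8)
The plan is to deduce this corollary almost directly from Theorem \ref{min_ent} together with Mariani's Theorem \ref{thm_mariani}, the only real work being to check that the two families of functionals $a_n^{-1}H(\cdot,Q_n)$ and $\frac{1}{a_n}(H(\cdot,Q_{0,n})+\frac12 J_n(\cdot))$ have the same $\Gamma$-behavior at Dirac masses $\delta_{(x_t)}$. First I would note that the natural domain of both functionals is the set of laws of processes; on any law $P$ with finite relative entropy with respect to $Q_n$ and flow of marginals $(\nu_t)$, Theorem \ref{min_ent}(i) gives the pointwise lower bound
\begin{equation}
H(P;Q_n) \geq H(P_0;Q_{0,n}) + \frac{1}{2}J_n((\nu_t)),
\end{equation}
and Theorem \ref{min_ent}(ii) produces, for the same marginals, a law $\tilde P$ achieving equality. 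The key observation is that the quantity $H(P_0;Q_{0,n})+\frac12 J_n((\nu_t))$ depends on $P$ only through its flow of marginals, and that for a sequence $P_n \to \delta_{(x_t)}$ in $\mathcal{C}([0,T],X)$ the marginals converge to $\delta_{x_t}$ for each $t$; conversely modifying a process to have a prescribed flow of marginals does not change whether the law converges to a given Dirac mass. This is what lets the two $\Gamma$-limits coincide.

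For the liminf ($\Gamma$-$\liminf$) inequality: given any $P_n \to \delta_{(x_t)}$, apply the lower bound above termwise, so $\liminf a_n^{-1}H(P_n;Q_n) \geq \liminf \frac{1}{a_n}(H(P_{0,n};Q_{0,n})+\frac12 J_n((\nu^n_t)))$; hence if the right-hand functionals $\Gamma$-$\liminf$ to $I(\delta_{(x_t)})$ so do the left-hand ones, and the reverse implication is trivial since $H(\cdot;Q_n)$ dominates the other functional. For the recovery ($\Gamma$-$\limsup$) sequence: starting from a recovery sequence $(\nu^n)$ for $\frac{1}{a_n}(H(\cdot;Q_{0,n})+\frac12 J_n(\cdot))$ at $\delta_{(x_t)}$ — which I may take, using Remark \ref{rq1}, to range over a convenient dense class of smooth trajectories — I would invoke Theorem \ref{min_ent}(ii) to replace each by a law $\tilde P_n$ with the same marginals and $H(\tilde P_n;Q_n) = H(\tilde P_{0,n};Q_{0,n})+\frac12 J_n((\nu^n_t))$, and check $\tilde P_n \to \delta_{(x_t)}$; this exhibits a recovery sequence for $a_n^{-1}H(\cdot;Q_n)$ with the same limit. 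The symmetric argument gives the converse. Combining both directions yields the claimed equivalence of $\Gamma$-convergence at every $\delta_{(x_t)}$, and then the second assertion follows by feeding this into Theorem \ref{thm_mariani}: the upper bound part of the LDP is equivalent to the $\Gamma$-$\liminf$ condition over Dirac masses plus exponential tightness, and the lower bound part to the existence of recovery sequences, so a full LDP with rate function $I$ holds iff the $J_n$-functionals $\Gamma$-converge to $I$ at every Dirac $\delta_{(x_t)}$ and the laws are exponentially tight.

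The main obstacle I anticipate is not in the $\Gamma$-convergence bookkeeping but in the convergence/measurability technicalities surrounding Theorem \ref{min_ent}(ii): one must be sure that the modified process $\tilde P_n$ (which has the prescribed marginals but a different path law) still converges to $\delta_{(x_t)}$ in the chosen topology on $\mathcal{C}([0,T],X)$, and that $J_n$ is lower semicontinuous and measurable as a functional of the law so that Mariani's theorem applies verbatim. A secondary subtlety is the restriction to Dirac masses: Theorem \ref{thm_mariani} as stated tests $\Gamma$-convergence only at $\delta_x$, so I must make sure every step (lower bound, recovery sequence, exponential tightness) is phrased purely at Dirac measures $\delta_{(x_t)}$ indexed by continuous trajectories, which is exactly the formulation given in the statement; Remark \ref{rq1} is what allows the recovery sequences to be built only on a dense subclass of trajectories rather than all of them.
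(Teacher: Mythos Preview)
Your approach is essentially the paper's: use Theorem \ref{min_ent}(i) for the $\Gamma$-$\liminf$ direction, Theorem \ref{min_ent}(ii) to manufacture recovery sequences, and then feed both into Mariani's Theorem \ref{thm_mariani}. The bookkeeping you outline is correct.

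There is, however, one genuine point the paper singles out as ``the only thing we still have to prove'' and which you gloss over. Theorem \ref{min_ent}(ii) is stated under the hypothesis that $P$ already has \emph{finite relative entropy with respect to $Q$}. When you write ``starting from a recovery sequence $(\nu^n)$ for $\frac{1}{a_n}(H(\cdot;Q_{0,n})+\frac12 J_n(\cdot))$ \ldots\ invoke Theorem \ref{min_ent}(ii)'', you are implicitly assuming that such a recovery sequence is the law of a process with $H(P_n;Q_n)<\infty$. But a priori a law $P_n$ can have finite $H(P_{0,n};Q_{0,n})+\frac12 J_n((\nu^n_t))$ (a quantity depending only on marginals) while $H(P_n;Q_n)=+\infty$, so Theorem \ref{min_ent}(ii) does not apply directly. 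What is actually needed is the statement: \emph{if $(\nu_t)$ is a flow of marginals with $J((\nu_t))<\infty$, then there exists a process whose law is absolutely continuous with respect to $Q$ and has exactly this flow of marginals}. The paper proves this separately at the end of Section 2, using the vector field $v_t$ from the continuity equation (Lisini's theorem) and setting $2b_t = v_t - \nabla\nu_t/\nu_t$ to exhibit the required SDE. Once this is established, your argument goes through as written. The obstacles you flag (convergence of $\tilde P_n$ to $\delta_{(x_t)}$, lower semicontinuity) are secondary; the convergence of $\tilde P_n$ follows because its marginals coincide with those of $P_n$, and convergence to a Dirac on path space is determined by the marginals.
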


Of course, studying relative entropy to understand large deviations is a known technique (see [DG] and [Fo]). Our contribution is to show that instead of studying relative entropy (which depends on the law of the whole trajectory), we can study the functional $J$, which only depends on the flow of marginals, and is easier to manipulate, at least in some cases of interest, due to its connexion with optimal transport. It should be noted that some of the ideas we use here (relative entropy, variational formulations for rate functions) are reminiscent of those used in [DG] to study large deviations for weakly-interacting mean-field models.

\begin{rmq}
We expect the rate function $I$ to be of the form $I(x_t) = I_0(x_0) + J(x_t)$, with $J$ the function involved in the formulation (\ref{eq_grad_flow}) of a gradient flow in a certain metric space. This comes from the fact that often the rate function $I$ will have a unique minimizer, which will be the deterministic limit of our sequence of processes. So we can reformulate the "`correct"' limit as the unique minimizer of a function. But gradient flow formulations also characterize some "`correct"' path as the unique minimizer of a functional. The similarity between these two point of views make us expect that they will be related, and this turns out to often be the case. See [ADPZ1] and [ADPZ2] for a study of this link in the case of sequences of independent processes.
\end{rmq}

\begin{rmq}
An important element of the study of the functional $J$ is the study of the Fischer information, or entropy-production functional. The importance of this functional can be understood in terms of statistical physics. It is a well-known principle in equilibrium statistical physics that equilibrium states can be obtained by optimizing some thermodynamic quantity, such as the free energy. This principle sometimes carries through to \emph{non-equilibrium} statistical physics. Since the system seeks to increase the physical entropy (and therefore decrease the mathematical entropy), we can look at the entropy production functional, which we seek to optimize. Gamma-convergence corresponds to convergence of minimizers, so we can expect the "`correct"' trajectories to be those that, in the limit, make the entropy production functional as small as possible.
\end{rmq}

\subsection{Some questions}

\begin{itemize}

\item Is there a similar phenomenon for the large deviations of discrete systems, such as interacting particle systems? In the recent paper [M], Maas showed that any reversible Markov chain on a finite space can be written as a gradient flow of a relative entropy for a well-chosen Riemannian structure on the space of probability measures. Can we exploit this structure to get the large deviations of systems such as a zero-range process, or exclusion processes?

\item Many partial differential equations can be written as gradient flows for energy functionals which are not the relative entropy, such as porous medium equations (see [O]). The energy production functional $J$ still characterizes such gradient flows. Are there any nice properties implied by Gamma-convergence of the functional $J$ for such systems?

\item We apply in Section 3 this principle to get the large deviations for a system of diffusions with nearest neighbor interaction. It would also be interesting to look at mean-field models, where each diffusion interacts with all the others. In the case of smooth mean-field interactions, the question has been solved in [DG], with a method that is very similar to the one we use here. A natural question is whether this extends to singular interactions. A case of interest is that of Coulomb interactions.

\item Another natural question is whether we can use this principle in a context of modelization. Say we wish to approximate a phenomenon characterized as the unique solution to a partial differential equation of the form $\partial_t \rho = H(\rho, \nabla \rho, ..)$ with a system of $N$ interacting diffusion, with $N$ large. If we can find a sequence of diffusion processes on $\R^N$  and a sequence of positive numbers $a_N$ such that $\frac{1}{a_N}J_N$ Gamma-converges to a lower semicontinuous functional that has the solution to the PDE as sole minimizer, then these diffusion processes form a good approximation. Can this idea be exploited in this context? This would be particularly interesting if we can extend our results to sequences of interacting particle systems.

\item Our method works for reversible diffusion processes. Is their a similar method that works for non reversible processes, such as interacting diffusion processes with a boundary condition, or second-order diffusion processes? A method has been recently developed in [DPZ] in what is called the GENERIC framework.

\end{itemize}

\section{Proof of Theorem \ref{min_ent}}

The proof is a generalization of ideas coming from Part II of [Fo]. It will consist in three steps : first we shall use Girsanov's theorem to give a representation of the law of processes that are absolutely continuous with respect to the law of the gradient flow. In a second step, we shall give a representation of the relative entropy of such a process, and finally we shall use this representation to obtain the lower bound of our theorem.

The following result is a direct application of Girsanov's theorem (see for example [Le, Theorem 2.4]):
\begin{prop}
Let $Q$ be the law of the solution of (\ref{sde}) on $[0,T]$, and $P$ the law of a process that is absolutely continuous with respect to $Q$. Then there exists an adapted process $b_t$ valued in $\mathbb{R}^d$ such that 
\begin{equation} \label{density_process}
\frac{dP}{dQ}((X_t)_{0\leq t \leq T}) = 1_{\frac{dP}{dQ} > 0} \frac{dP_0}{dQ_0}(X_0) \exp \left(\int_0^T{b_t \cdot \sqrt{2A(X_t)}dB^P_t} + \int_0^T{\langle A(X_t)b_t, b_t \rangle dt}\right).
\end{equation}

In this equation, $B^P$ is a P-Brownian motion, that is a local martingale under $P$ which $P$-almost surely has quadratic variation equal to $t$. Moreover, $P$ can be viewed as the law of a solution to the SDE
\begin{equation} \label{gen_sde}
dX_t = A(X_t)(2b_t - \nabla H(X_t))dt + \text{div}(A)(X_t)dt + \sqrt{2A(X_t)}dB_t^P
\end{equation}

As a consequence, the relative entropy is given by 

\begin{equation}
H(P; Q) = H(P_0,Q_0) +  \mathbb{E}_P \left[\int_0^T{\langle A(X_t)b_t, b_t \rangle dt} \right].
\end{equation}
\end{prop}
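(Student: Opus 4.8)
The plan is to apply Girsanov's theorem to the likelihood ratio $dP/dQ$, read off from it the perturbation drift $b$, and then obtain the relative entropy from the logarithm of this density. First I would set $D_T := dP/dQ$ on $\mathcal{F}_T := \sigma((X_s)_{0 \le s \le T})$, let $D_t := \mathbb{E}_Q[D_T \mid \mathcal{F}_t]$ be the associated $Q$-martingale (nonnegative and, since $P \ll Q$, strictly positive on all of $[0,T]$ $P$-almost surely), and record that restricting to $\mathcal{F}_0 = \sigma(X_0)$ gives $D_0 = (dP_0/dQ_0)(X_0)$. Because the diffusion matrix $2A$ in (\ref{sde}) is uniformly elliptic and bounded thanks to (\ref{assump-tensor1}), the canonical filtration of $X$ under $Q$ enjoys the predictable representation property relative to the driving $Q$-Brownian motion $B$, so $D_t = D_0 + \int_0^t \psi_s \cdot dB_s$ for some predictable $\psi$; Ito's formula applied to $\log D_t$ then gives $D_t = D_0 \exp\!\big( \int_0^t \phi_s \cdot dB_s - \tfrac{1}{2} \int_0^t |\phi_s|^2 \, ds \big)$ with $\phi_s := \psi_s / D_s$, which is precisely the cited Girsanov representation [Le, Theorem 2.4]. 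I would then define the adapted process $b_s := \tfrac{1}{\sqrt{2}}\, (\sqrt{A(X_s)})^{-1} \phi_s$ — here $A(x)$ is invertible and $\sqrt{A(x)}$ is symmetric positive by ellipticity — so that $\phi_s = \sqrt{2A(X_s)}\, b_s$ and $\tfrac{1}{2} |\phi_s|^2 = \langle A(X_s) b_s, b_s \rangle$.

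Next, Girsanov's theorem also gives that $B^P_t := B_t - \int_0^t \phi_s \, ds$ is a continuous $P$-local martingale with quadratic variation $t$, hence a $P$-Brownian motion. Substituting $dB_s = dB^P_s + \phi_s \, ds$ into the exponent above converts $\int_0^t \phi_s \cdot dB_s - \tfrac{1}{2} \int_0^t |\phi_s|^2 \, ds$ into $\int_0^t b_s \cdot \sqrt{2A(X_s)}\, dB^P_s + \int_0^t \langle A(X_s) b_s, b_s \rangle \, ds$, which is (\ref{density_process}); the indicator $1_{dP/dQ > 0}$ merely absorbs the $Q$-null set $\{D_T = 0\}$ on which the left-hand side vanishes. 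Making the same substitution in the $Q$-dynamics $dX_t = (\text{div}(A)(X_t) - A(X_t)\nabla H(X_t))\, dt + \sqrt{2A(X_t)}\, dB_t$ of (\ref{sde}) adds the drift $\sqrt{2A(X_t)}\, \phi_t \, dt = 2A(X_t) b_t \, dt$, which is exactly (\ref{gen_sde}).

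Finally, taking logarithms in (\ref{density_process}) and integrating against $P$ gives
\[
H(P;Q) = \mathbb{E}_P\big[\log (dP_0/dQ_0)(X_0)\big] + \mathbb{E}_P\Big[\int_0^T b_t \cdot \sqrt{2A(X_t)}\, dB^P_t\Big] + \mathbb{E}_P\Big[\int_0^T \langle A(X_t) b_t, b_t \rangle \, dt\Big],
\]
whose first term equals $H(P_0;Q_0)$ by definition, so the statement reduces to showing that the stochastic-integral term has zero $P$-expectation. This is the one genuinely delicate point, and the place where the hypothesis $H(P;Q) < \infty$ is used, since a priori $t \mapsto \int_0^t b_s \cdot \sqrt{2A(X_s)}\, dB^P_s$ is only a $P$-local martingale. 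I would handle it by the usual localization argument: stop at the exit times $\tau_n$ of balls of radius $n$ (or truncate $b$) so that each stopped stochastic integral is a true $P$-martingale and the identity holds with zero expectation for the stopped process, then let $n \to \infty$, using the finiteness of $H(P;Q)$ together with monotone convergence on the nonnegative term $\int_0^T \langle A b_t, b_t \rangle \, dt$ to pass to the limit. I expect this passage from a local to a genuine martingale — equivalently, the upgrade of the straightforward inequality to the stated equality — to be the main obstacle; the remaining steps are a routine combination of Ito's formula and Girsanov's theorem, made available by the uniform ellipticity (\ref{assump-tensor1}).
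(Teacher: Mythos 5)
Your proposal is correct and is essentially the argument that the paper is implicitly invoking when it says this is ``a direct application of Girsanov's theorem'' and points to [Le, Theorem 2.4]: predictable representation for the canonical filtration of an elliptic diffusion, It\^o for $\log D_t$ to exhibit the density as a Dol\'eans--Dade exponential, the Girsanov change of Brownian motion to pass from $B$ to $B^P$, and a localization/monotone-convergence step to upgrade the local-martingale identity to the stated entropy formula. The one thing to flag is that you treat $H(P;Q)<\infty$ as a hypothesis of this Proposition, whereas it is only assumed in Theorem~\ref{min_ent}; as stated the Proposition should be read as an identity in $[0,+\infty]$, and your localization argument in fact delivers exactly that (both sides are $+\infty$ simultaneously), so this is a matter of phrasing rather than a gap.
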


Note that the relative entropy doesn't only depend on the flow of marginals $P_t$, but on the law of the whole trajectory, unlike the functional $J$.

\begin{lem}[Markov version of the process]
Let $b_t = b(t,(X_s)_{0 \leq s \leq t})$ be the adapted process associated to a law $P$. Define 
\begin{equation} \label{mark_ver}
\tilde{b}_t(x) := \mathbb{E}_P[ b(t,(X_s)_{0 \leq s \leq t}) | X_t = x ].
\end{equation}

Then the process defined by 
\begin{equation} \label{mark_sde}
dX_t = A(X_t)(2\tilde{b}_t(X_t) - \nabla H(X_t))dt + \text{div}(A)(X_t)dt + \sqrt{2A(X_t)}dB_t
\end{equation}
with initial condition $X_0 \sim P_0$ is a Markov process, and its law has the same flow of marginals as $P$.
\end{lem}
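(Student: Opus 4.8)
The plan is to recognize the statement as an instance of the classical mimicking (Markovian projection) theorem for Itô processes, going back to Gyöngy. The underlying principle is that the one-dimensional time marginals of an Itô process are governed by a Fokker--Planck equation that only sees the conditional expectations of the coefficients given the current position; here, moreover, only the drift needs to be projected, since the diffusion coefficient $\sqrt{2A(X_t)}$ in (\ref{gen_sde}) is already a function of $X_t$ alone. Concretely, I would start from the representation (\ref{gen_sde}) of $P$ as the law of a solution of $dX_t = A(X_t)(2b_t - \nabla H(X_t))dt + \mathrm{div}(A)(X_t)dt + \sqrt{2A(X_t)}dB_t^P$, apply Itô's formula to $\varphi(X_t)$ for $\varphi$ smooth and bounded, and take expectations under $P$ to kill the martingale term. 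In the resulting identity, every term except the one containing $b_t$ is already $\sigma(X_t)$-measurable; for that term, the tower property together with the $\sigma(X_t)$-measurability of $\nabla\varphi(X_t)$ and $A(X_t)$ gives $\mathbb{E}_P[\nabla\varphi(X_t)\cdot A(X_t)\,2b_t] = \mathbb{E}_P[\nabla\varphi(X_t)\cdot A(X_t)\,2\tilde b_t(X_t)]$. Hence $\nu_t := \mathrm{Law}_P(X_t)$ is a weak solution of $\partial_t\nu_t = \mathcal L_t^*\nu_t$, where $\mathcal L_t$ is exactly the generator of the SDE (\ref{mark_sde}).

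It then remains to produce a Markov process whose marginal flow solves this same Fokker--Planck equation with initial datum $P_0$. First I would note that $(t,x)\mapsto\tilde b_t(x)$ can be chosen jointly measurable, as a measurable disintegration of $P$ along $t\mapsto X_t$, and that by the conditional Jensen inequality applied to the convex function $v\mapsto\langle A(X_t)v,v\rangle$, together with the finiteness of $H(P;Q)$ coming from (\ref{density_process}), one has $\int_0^T\!\int\langle A\tilde b_t,\tilde b_t\rangle\,d\nu_t\,dt \leq \mathbb{E}_P[\int_0^T\langle A(X_t)b_t,b_t\rangle\,dt] < \infty$; thus the drift of (\ref{mark_sde}) is square-integrable against $\nu_t\,dt$ and the associated martingale problem is meaningful. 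I would then invoke a superposition principle, in the spirit of Ambrosio, Figalli and Trevisan, to lift the weak solution $(\nu_t)$ of the Fokker--Planck equation to a solution of the martingale problem for $\mathcal L_t$ with one-dimensional marginals $(\nu_t)$, and finally a Markov (or martingale) selection argument, in the spirit of Stroock--Varadhan and Krylov, or the explicit construction of Brunick and Shreve, to extract a Markov solution of this martingale problem, equivalently a weak solution of (\ref{mark_sde}) with initial law $P_0$. Uniform ellipticity of $A$, assumption (\ref{assump-tensor1}), enters here: it makes the Fokker--Planck equation uniformly parabolic with bounded diffusion coefficient, which pins down the marginal flow of \emph{any} such solution to be $(\nu_t)$, so the Markov process we select indeed has the same marginals as $P$.

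The main obstacle is precisely that $\tilde b_t(x)$ is defined only $\nu_t$-almost everywhere and is merely measurable, so none of the classical strong existence, uniqueness, or Markov-property statements for SDEs apply off the shelf; the construction must be carried out entirely at the level of the martingale problem, and one genuinely needs both the superposition principle and a Markov selection theorem to close it. A secondary, more routine, point is the measurable-selection step needed to turn $\tilde b$ into a bona fide function of $(t,x)$ and to justify the conditioning, together with checking that the remaining drift terms $A\nabla H$ and $\mathrm{div}(A)$ are controlled sufficiently against $\nu_t\,dt$ for the martingale problem to be well posed, which again follows from the uniform bounds on $A$ in (\ref{assump-tensor1}) and the finiteness of the relative entropy.
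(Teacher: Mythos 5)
Your core argument---It\^o's formula to derive the Fokker--Planck equation satisfied by $\nu_t$, the tower property to replace $b_t$ by $\tilde b_t(X_t)$ inside the drift expectation, and uniqueness of solutions to that PDE to conclude the marginals agree---is exactly the paper's argument. Where you part ways is in the care you take with the paper's two unproved assertions: that (\ref{mark_sde}) has a solution which is Markov, and that the projected Fokker--Planck equation has a unique solution. The paper dismisses the Markov property as a ``classic result on SDEs'' and takes uniqueness of the parabolic PDE for granted, but as you correctly observe, $\tilde b_t(\cdot)$ is defined only $\nu_t$-a.e.\ and is merely jointly measurable, so neither assertion follows from standard SDE theory; the honest route is precisely the one you sketch, namely a superposition principle (Ambrosio--Figalli--Trevisan) to lift the Fokker--Planck solution to a solution of the martingale problem, a Markov selection argument (Stroock--Varadhan, Krylov, or Brunick--Shreve's explicit mimicking construction) to extract a Markov representative, and uniform ellipticity from (\ref{assump-tensor1}) to pin the marginal flow of that representative to $(\nu_t)$. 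Your proposal is thus a strengthening of the paper's proof rather than an alternative to it: it supplies the technical scaffolding the paper takes for granted while keeping the same backbone.
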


\begin{proof}
The fact that this process is a Markov process is a classic result on SDEs, so we shall concentrate on proving that both processes have same marginals. Let $g$ be a smooth function, $X_t$ be a solution of (\ref{gen_sde}) and $\tilde{X}_t$ a solution of (\ref{mark_sde}). We have
\begin{align}
\mathbb{E}[g(\tilde{X}_t)] &= \int_0^t{\mathbb{E}\left[A(\tilde{X}_s)\nabla g(\tilde{X}_s) \cdot (2\tilde{b}_s(X_s) - \nabla H(X_s))\right]ds} \notag \\
& \hspace{1cm} + \int_0^t{\mathbb{E}\left[\text{div}(A(\tilde{X}_s)\nabla g(\tilde{X}_s))\right]ds} \notag \\
&= \int_0^t{\mathbb{E}\left[A(\tilde{X}_s)\nabla g(\tilde{X}_s) \cdot (2\mathbb{E}_P[ b(s,(X_r)_{0 \leq r \leq s}) | X_s] - \nabla H(X_s))\right]ds} \notag \\
& \hspace{1cm} + \int_0^t{\mathbb{E}\left[\text{div}(A(\tilde{X}_s)\nabla g(\tilde{X}_s))\right]ds} \notag 
\end{align}
and
\begin{align}
\mathbb{E}[g(X_t)] &= \int_0^t{\mathbb{E}\left[A(X_s)\nabla g(X_s) \cdot (2b(s,(X_r)_{0 \leq r \leq s}) - \nabla H(X_s))\right]ds} \notag \\
& \hspace{1cm} + \int_0^t{\mathbb{E}\left[\text{div}(A(X_s)\nabla g(X_s))\right]ds} \notag \\
&= \int_0^t{\mathbb{E}\left[A(X_s)\nabla g(X_s) \cdot (2\mathbb{E}_P[b(s,(X_r)_{0 \leq r \leq s})|X_s] - \nabla H(X_s))\right]ds} \notag \\
& \hspace{1cm} + \int_0^t{\mathbb{E}\left[\text{div}(A(X_s)\nabla g(X_s))\right]ds} \notag \\
&= \int_0^t{\mathbb{E}\left[A(X_s)\nabla g(X_s) \cdot (2\tilde{b}(X_s) - \nabla H(X_s))\right]ds} \notag \\
&\hspace{5mm} + \int_0^t{\mathbb{E}\left[\text{div}(A(X_s)\nabla g(X_s))\right]ds}. \notag
\end{align}

This shows that the marginals of the laws of $X$ and $\tilde{X}$ satisfy the same parabolic PDE
$$\frac{\partial f}{\partial t} = \text{div}(A\nabla f) + \text{div}(A(2\tilde{b} - \nabla H)f).$$ 
Since they have the same initial condition, and since solutions to such PDEs are unique, they are the same.
\end{proof}

\begin{lem}
Let $\tilde{P}$ be the law of the solution of (\ref{mark_ver}).

We have 
\begin{equation}
H(P;Q) \geq H(\tilde{P};Q)
\end{equation}
\end{lem}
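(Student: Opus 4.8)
The plan is to read off both relative entropies from the Girsanov representation given above and then reduce the inequality to a conditional Jensen estimate. By that representation,
$$H(P;Q) = H(P_0,Q_0) + \mathbb{E}_P\left[\int_0^T \langle A(X_t) b_t, b_t\rangle\, dt\right],$$
and since $\tilde{P}$ is, by the previous lemma, the law of the solution of an SDE of the same type with adapted (in fact Markovian) drift built from $\tilde{b}_t$, the same formula applied to $\tilde{P}$ gives
$$H(\tilde{P};Q) = H(\tilde{P}_0,Q_0) + \mathbb{E}_{\tilde{P}}\left[\int_0^T \langle A(X_t) \tilde{b}_t(X_t), \tilde{b}_t(X_t)\rangle\, dt\right].$$
(The hypothesis $H(P;Q)<\infty$ forces $\mathbb{E}_P[\int_0^T\langle A b,b\rangle\,dt]<\infty$; the a posteriori bound below shows the analogous quantity for $\tilde{P}$ is finite as well, so this application of Girsanov is legitimate — otherwise the claimed inequality is trivial.)

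The first step is to use that $P$ and $\tilde{P}$ have the same one-time marginals $(\nu_t)$. In particular $\tilde{P}_0 = P_0 = \nu_0$, so $H(\tilde{P}_0,Q_0) = H(P_0,Q_0)$; and the integrand $x \mapsto \langle A(x)\tilde{b}_t(x), \tilde{b}_t(x)\rangle$ depends only on the current position, so its expectation under $\tilde{P}$ equals its expectation under $P$. Hence, using Tonelli to exchange the (nonnegative) integrals, it suffices to prove that for almost every $t \in [0,T]$,
$$\mathbb{E}_P\left[\langle A(X_t) b_t, b_t\rangle\right] \;\geq\; \mathbb{E}_P\left[\langle A(X_t)\tilde{b}_t(X_t), \tilde{b}_t(X_t)\rangle\right].$$

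The second step is the key (and only substantive) point: fix $t$ and condition on $X_t$. Because $A(x)$ is symmetric positive definite, the quadratic form $v \mapsto \langle A(x)v,v\rangle$ is convex on $\mathbb{R}^{d}$ for each $x$, and conditionally on $\{X_t = x\}$ the matrix $A(X_t) = A(x)$ is deterministic. The conditional Jensen inequality (measurable version, applied to $\Psi(x,v) := \langle A(x)v,v\rangle$ which is convex in $v$ uniformly in the conditioning variable) therefore yields
$$\mathbb{E}_P\left[\langle A(X_t) b_t, b_t\rangle \mid X_t\right] \;\geq\; \left\langle A(X_t)\,\mathbb{E}_P[b_t \mid X_t],\ \mathbb{E}_P[b_t \mid X_t]\right\rangle \;=\; \langle A(X_t)\tilde{b}_t(X_t), \tilde{b}_t(X_t)\rangle,$$
where the last equality is just the definition \eqref{mark_ver} of $\tilde{b}_t$. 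Taking expectations and using the tower property gives the displayed inequality for almost every $t$; integrating over $[0,T]$ and combining with the first step gives $H(P;Q) \geq H(\tilde{P};Q)$. I expect the only obstacle to be routine bookkeeping — checking integrability and the measurable selection needed to define $\tilde{b}_t$, and justifying Fubini/Tonelli and the conditional Jensen inequality in this setting — none of which is deep; the mathematical content is entirely the convexity of $v \mapsto \langle A(x)v,v\rangle$ together with the fact that passing to the Markovian drift is exactly averaging $b_t$ over the current position.
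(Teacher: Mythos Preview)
Your proof is correct and follows essentially the same route as the paper: both arguments read off the two relative entropies from the Girsanov representation, use that $P$ and $\tilde{P}$ share the same one-time marginals (so $\tilde{P}_0=P_0$ and $\mathbb{E}_{\tilde{P}}[\langle A(X_t)\tilde{b}_t,\tilde{b}_t\rangle]=\mathbb{E}_{P_t}[\langle A(X_t)\tilde{b}_t,\tilde{b}_t\rangle]$), and then apply conditional Jensen for the convex map $v\mapsto\langle A(x)v,v\rangle$ to conclude. Your write-up is slightly more explicit about the integrability and Tonelli/Jensen bookkeeping, but the mathematical content is identical.
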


\begin{proof}
We already know that 
\begin{equation} \label{ent_mark_version}
H(\tilde{P}; Q) = H(P_0; Q_0) + \mathbb{E}_{\tilde{P}}\left[\int_0^T{\langle A(X_t)\tilde{b}_t, \tilde{b}_t \rangle dt} \right].
\end{equation}

An application of Jensen's inequality and the definition of $\tilde{b}$ yields
\begin{align}
H(P;Q) - H(P_0,Q_0) &= \mathbb{E}_P\left[\int_0^T{\langle A(X_t)b_t, b_t \rangle dt} \right] \notag \\
&= \int_0^T{\mathbb{E}_P\left[\langle A(X_t)b_t, b_t \rangle \right] dt} \notag \\
&= \int_0^T{\mathbb{E}_{P_t} \left[\mathbb{E}_P\left[\langle A(X_t)b_t, b_t \rangle \middle| X_t  \right]\right] dt} \notag \\
&\geq \int_0^T{\mathbb{E}_{P_t} \left[\langle A(X_t) \tilde{b}(X_t), \tilde{b}(X_t) \rangle \right] dt} \notag \\
&= H(\tilde{P}, Q) - H(P_0,Q_0),
\end{align}
which is the desired lower bound.
\end{proof}

\begin{lem}[Entropy of the Markov process]  The entropy of the Markov version of the process satisfies 
$$H(\tilde{P};Q) = H(\nu_0,Q_0) + \frac{1}{2}J((\nu_t)_t)$$

where $\nu_t$ is the flow of marginals of the process $P$.
\end{lem}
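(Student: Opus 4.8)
The plan is to reduce the claimed equality to a short algebraic identity relating $\int_0^T\!\int\langle A\tilde b_t,\tilde b_t\rangle\,d\nu_t\,dt$ to the three ingredients of $J$. Since $\tilde P$ is the law of the Markov diffusion (\ref{mark_sde}), whose time-$t$ drift $A(2\tilde b_t-\nabla H)+\text{div}(A)$ is a function of $X_t$ only, the Girsanov representation together with the already recorded identity (\ref{ent_mark_version}) and the fact that the time-$t$ marginal of $\tilde P$ is $\nu_t$ gives
$H(\tilde P;Q)=H(\nu_0,Q_0)+\mathbb{E}_{\tilde P}\big[\int_0^T\langle A(X_t)\tilde b_t,\tilde b_t\rangle\,dt\big]=H(\nu_0,Q_0)+\int_0^T\!\int\langle A\tilde b_t,\tilde b_t\rangle\,d\nu_t\,dt$
(using $H(P_0,Q_0)=H(\nu_0,Q_0)$). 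So it remains to prove $\int_0^T\!\int\langle A\tilde b_t,\tilde b_t\rangle\,d\nu_t\,dt=\tfrac12 J((\nu_t))$.

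I would then evaluate the two nonconstant terms of $J$ along $(\nu_t)$, writing $\nu_t=f_t\mu$ with $\mu=e^{-H}dx$. \emph{(a) Entropy dissipation.} From the Markov-version lemma the densities solve weakly $\partial_t(f_t\mu)=\text{div}(A\nabla f_t\,\mu)-2\,\text{div}(\nu_t A\tilde b_t)$; differentiating $H_\mu(\nu_t)=\int f_t\log f_t\,d\mu$, using $\int\partial_t f_t\,d\mu=0$ and integrating by parts yields $\frac{d}{dt}H_\mu(\nu_t)=-\int\langle A\nabla\log f_t,\nabla\log f_t\rangle\,d\nu_t+2\int\langle A\tilde b_t,\nabla\log f_t\rangle\,d\nu_t$, and since $\frac{\nabla\nu_t}{\nu_t}+\nabla H=\nabla\log f_t$ the first integral equals $g(\nu_t)^2$ by (\ref{def_upper_gradient}); integrating over $[0,T]$ gives $H_\mu(\nu_0)-H_\mu(\nu_T)=\int_0^T g(\nu_t)^2\,dt-2\int_0^T\!\int\langle A\tilde b_t,\nabla\log f_t\rangle\,d\nu_t\,dt$. \emph{(b) Metric derivative.} The same equation reads $\partial_t\nu_t+\text{div}(\nu_t V_t)=0$ with $V_t:=A(2\tilde b_t-\nabla\log f_t)$; since $\nabla\log f_t$ is a gradient and (see the last paragraph) $\tilde b_t$ may be taken to be one, $V_t$ is of ``$G$-gradient type'' and is therefore the minimal-norm velocity compatible with this continuity equation, so the Benamou--Brenier description of $|\dot\nu|$ used in [AGS], [L] (and behind the alternate formula for $J$ in the Remark after (\ref{def_j})) gives $|\dot\nu|(t)^2=\int\langle GV_t,V_t\rangle\,d\nu_t=\int\langle A(2\tilde b_t-\nabla\log f_t),\,2\tilde b_t-\nabla\log f_t\rangle\,d\nu_t$. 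Expanding the square and inserting (a) and (b) into $J((\nu_t))=H_\mu(\nu_0)-H_\mu(\nu_T)-\tfrac12\int_0^T(g(\nu_t)^2+|\dot\nu|(t)^2)\,dt$, the $g(\nu_t)^2$ contributions and the cross terms $\int\langle A\tilde b_t,\nabla\log f_t\rangle\,d\nu_t$ cancel, leaving $J((\nu_t))=2\int_0^T\!\int\langle A\tilde b_t,\tilde b_t\rangle\,d\nu_t\,dt$, which with the first paragraph is exactly the assertion.

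The delicate step is (b), namely that the obvious velocity $V_t$ realizes $|\dot\nu|(t)$: for a general competitor one only has $|\dot\nu|(t)^2\le\int\langle GV_t,V_t\rangle\,d\nu_t$, with equality iff $V_t$---equivalently $\tilde b_t$---is orthogonal in $L^2(\nu_t;A)$ to the divergence-free fields, i.e. is a gradient. I would therefore first replace $\tilde b_t$ by its $L^2(\nu_t;A)$-orthogonal projection onto gradient vector fields: this leaves $\text{div}(\nu_t A\tilde b_t)$, and hence the flow of marginals $(\nu_t)$, unchanged---so the resulting diffusion is precisely the process appearing in part (ii) of Theorem \ref{min_ent}---while it does not increase $\int\langle A\tilde b_t,\tilde b_t\rangle\,d\nu_t$; with this projected drift, (b) becomes an identity and the computation above closes. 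The remaining difficulties are routine: justifying the integration by parts (integrability and decay at infinity), the absolute continuity of $t\mapsto H_\mu(\nu_t)$ needed for the fundamental theorem of calculus, and carrying out these manipulations under only the boundedness and lower-semicontinuity hypotheses (\ref{assump-tensor1})--(\ref{assump-tensor2}) on $G$; these can be dealt with by the same approximation scheme from [AGS], [L] already invoked for the proposition defining $J$.
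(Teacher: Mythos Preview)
Your approach is essentially the same as the paper's: both start from the Girsanov identity (\ref{ent_mark_version}), derive the PDE for the marginals and differentiate the relative entropy along it, invoke Benamou--Brenier for the metric derivative, and combine the three pieces algebraically to obtain $J((\nu_t))=2\int_0^T\!\int\langle A\tilde b_t,\tilde b_t\rangle\,d\nu_t\,dt$. Your computations (a) and (b) match the paper's displays (\ref{dif_ent}) and (\ref{prod_metric_der}) once one uses $\nabla\log f_t=\nabla\nu_t/\nu_t+\nabla H$.

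You are in fact more careful than the paper on one point. The paper simply asserts the equality $|\dot\nu|(t)^2=\int\langle GV_t,V_t\rangle\,d\nu_t$ with $V_t=A(2\tilde b_t-\nabla\log f_t)$ ``from the Benamou--Brenier formula'', without checking that this particular velocity is the optimal one; you correctly observe that Benamou--Brenier only yields $\leq$, with equality iff $V_t$ is of $G$-gradient type, and you propose the right fix (project $\tilde b_t$ onto gradients in $L^2(\nu_t;A)$). One small caveat: this projection changes the drift and hence the process, so what you end up proving is the \emph{equality} for the projected process and only the \emph{inequality} $H(\tilde P;Q)\geq H(\nu_0,Q_0)+\tfrac12 J((\nu_t))$ for the conditional-expectation process $\tilde P$ as literally defined in the paper. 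This is harmless for the use made of the lemma---it is exactly what Theorem \ref{min_ent} needs (the inequality gives part (i), and the projected process furnishes the $\tilde P$ of part (ii))---but it is worth saying explicitly that the equality in the lemma statement should be read for the gradient-projected Markov version.
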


\begin{proof}
Let $g$ be a smooth, compactly supported function. It\^o's formula applied to the SDE (\ref{mark_sde}) yields
\begin{align} \label{ito}
\mathbb{E}[g(X_t)] = \mathbb{E}[g(X_0)] &+ \int_0^t{\mathbb{E}[A\nabla g(X_s) \cdot (2\tilde{b}_s(X_s) - \nabla H(X_s))] ds} \notag \\
&+ \int_0^t{\mathbb{E}[(\nabla \cdot A(X_s) \nabla g(X_s))]ds}
\end{align}

It is easy to deduce from the Ito formulation (\ref{ito}) that the flow of marginals $\nu_t$ solves (in a weak sense) the PDE

\begin{align}
\dot{\nu}_t &= -\text{div}\left(2A\tilde{b}_t\nu_t - \nu_tA\nabla H - A\nabla \nu_t \right). \notag \\
\end{align}

Therefore the variation of the entropy of the marginals is given by
\begin{align} \label{dif_ent}
\Ent_{\mu}(\nu_T) &- \Ent_{\mu}(\nu_0) = \int_0^T{\int{A\nabla \nu_t(x) \cdot (2\tilde{b}_t(x) - \nabla H(x))dx}dt} \notag \\
& \hspace{5mm} - \int_0^T{\int{\frac{\langle A(x)\nabla \nu_t(x), \nabla \nu_t(x)\rangle}{\nu_t(x)} dx}dt} \notag \\
& \hspace{5mm} + \int_0^T{\int{A(2\tilde{b}_t - \nabla H) \cdot \nabla H d\nu_t}dt} - \int_0^T{\int{A\nabla H \cdot \nabla \nu_t dx}dt} \notag \\
& = \int_0^T{\int{2A\tilde{b}_t \cdot \nabla \nu_t + \nu_t\nabla H dx}dt} - \int{\frac{A(\nabla \nu_t + \nu_t \nabla H)\cdot (\nabla \nu_t + \nu_t \nabla H)}{\nu_t} dx}
\end{align}

From the Benamou-Brenier formula for $W_2$ (see for example [Vi1]):

$$W_{2,G}^2(\nu_0, \nu_1) = \inf \left\{ \int_0^1{\int{\langle G v, v \rangle d\nu_t }dt}; \hspace{3mm} \dot{\nu} + \text{div}(v\nu_t) = 0 \right\}$$

we deduce 

\begin{align} \label{prod_metric_der}
\frac{1}{2}&\int_0^T{|\dot{\nu}_t|^2dt} \notag \\
&= \frac{1}{2}\int_0^T{\left\langle A\left(2\tilde{b}_t - \nabla H - \frac{\nabla \nu_t}{\nu_t}\right), \left(2\tilde{b}_t - \nabla H + \frac{\nabla \nu_t}{\nu_t}\right) \right\rangle d\nu_t dt} \notag \\
&= 2\int_0^T{\int{ \langle A\tilde{b}_t, \tilde{b}_t \rangle d\nu_t}dt} + \frac{1}{2}\int_0^T{\int{\left\langle A\left(\nabla H + \frac{\nabla \nu_t}{\nu_t}\right), \left(\nabla H + \frac{\nabla \nu_t}{\nu_t}\right) \right\rangle d\nu_t} dt} \notag \\
& \hspace{5mm} - \int_0^T{\int{ \left\langle 2A\tilde{b}_t, \left(\nabla H + \frac{\nabla \nu_t}{\nu_t}\right) \right\rangle d\nu_t} dt}.
\end{align}

By the definition (\ref{def_upper_gradient}) of the upper gradient $g$, we have

\begin{equation} \label{prod_upper_grad}
\frac{1}{2}\int_0^T{g(\nu_t)^2dt} = \frac{1}{2}\int_0^T{\int{ \left\langle \frac{A\nabla \nu_t}{\nu_t} + A\nabla H, \frac{\nabla \nu_t}{\nu_t} + \nabla H \right\rangle d\nu_t}dt}.
\end{equation}

Combining (\ref{dif_ent}), (\ref{prod_metric_der}) and (\ref{prod_upper_grad}), we get 

\begin{equation} 
J((\nu_t)_t) =2\int_0^T{\int{ \langle A\tilde{b}_t, \tilde{b}_t \rangle d\nu_t}dt},
\end{equation}

and then the lemma immediately follows from (\ref{ent_mark_version}).
\end{proof}

To deduce Corollary \ref{cor_ent} from Theorem \ref{min_ent}, the only thing we still have to prove is that, if $(\nu_t)$ is a flow of marginals such that $J((\nu_t))$ is finite, there exists a process whose law is absolutely continuous with respect to $Q$, and with flow of marginals $(\nu_t)$.

Let $(\nu_t)$ be an absolutely continuous flow of marginals such that $J((\nu_t))$ is finite. From [L, Theorem 2.4], we know that there exists a vector field $(v_t)$ such that the continuity equation
\begin{equation}
\dot{\nu}_t = \nabla \cdot (Av_t \nu_t)
\end{equation}
is satisfied. On the other hand, we know that $(\nu_t)$ is the flow of marginals of the solution to an SDE of type 
$$dX_t = 2A(X_t)b_t(X_t)dt + \sqrt{2A(X_t)}dB_t,$$
whose law would then be absolutely continuous with respect to $Q$, if the flow solves in a weak sense the PDE
$$\dot{\nu}_t = \text{div}(A(\nabla \nu_t + 2b_t\nu_t)).$$
Since $J((\nu_t))$ is finite, the upper gradient $g(\nu_t)$ is finite for almost every $t$, and $\nabla \nu_t$ exists. We therefore only have to take $2b_t(x) = v_t(x) - \frac{\nabla \nu_t}{\nu_t}(x)$ to see that the flow solves the above PDE.

\section{Large deviations for the Ginzburg-Landau model}

\subsection{The model}

The (classical) Ginzburg-Landau model equipped with Kawasaki dynamics is a sequence of $N$ diffusions, interacting according to the SDE

$$dX^i_t = N^2(\psi(X^{i+1}_t) + \psi(X^{i-1}_t) - 2\psi(X^i_t))dt + \sqrt{2}N(dB^{i+1}_t - dB^i_t).$$

This diffusion is not ergodic on the whole space $\R^N$, since it preserves the quantity $\sum Xî_t$, but it is ergodic when restricted to a hyperplane 
\begin{equation}
X_{N,m} := \left\{ x \in \R^N; \hspace{3mm} \frac{1}{N} \underset{i = 1}{\stackrel{N}{\sum}} \hspace{1mm} x_i = m \right\}.
\end{equation}
It then has an invariant measure 
\begin{equation}
\mu(dx) := \frac{1}{Z}\exp\left(-\frac{1}{N} \underset{i = 1}{\stackrel{N}{\sum}} \hspace{1mm} \psi(x_i)\right) \mathbbm{1}_{x \in X_{N,m}}\mathcal{L}^{N-1}(dx)
\end{equation}
where $\mathcal{L}$ is the Lebesgue measure on $X_{N,m}$. Equivalently, its law is given by the solution of the PDE

\begin{equation}\label{gl_eq}
\frac{\partial f\mu}{\partial t} = \text{div}(A_0\nabla f \mu),
\end{equation}
where $f$ is the density with respect to $\mu$, and $A_0$ is the discrete Laplacian scaled by $N^2$, that is 
$$(A_0){i,j} := N^2(\delta_{i, j+1} + \delta_{i, j-1} - 2\delta_{i, j}).$$

When $N$ goes to infinity, if the initial condition behaves deterministically in the limit, the (properly rescaled) solutions concentrate around a deterministic profile, called the hydrodynamic limit, which has been studied in [GPV]. Large deviations from this hydrodynamic limit have been studied in [DV]. 

We shall investigate the large deviations as $N$ goes to infinity for two versions of this model, with conductances. The first case will involve random conductances, and the second will involve conductances depending on the configuration of spins. Our method will rely on Corollary \ref{cor_ent}, and reduce the problem to the study of the behavior of the functional $J_N$ associated with the gradient flow formulation of these dynamics.

For technical reasons, we shall assume that the initial data follows a local Gibbs state, that is 
\begin{equation} \label{init_GL}
f_0(x) = \frac{1}{Z}\exp\left(\sum x_i \varphi'(\rho_0(i/N)) \right)
\end{equation}
for some continuous function $\rho_0$. We will later see that this initial data concentrates around the deterministic profile $\rho_0$. It can be shown that, for initial data that behaves deterministically in the limit, solutions at any positive time are close (in the sense of relative entropy) to such a local Gibbs state. See [K] or [F1] for a proof.

We will also assume that the single-site potential $\psi$ is of the form 
\begin{equation} \label{hyp_psi}
\psi(x) = \frac{1}{p}x^p + \delta\psi(x)
\end{equation}
for some $p \geq 0$ and a perturbation $\delta\psi$ that is $C^2$, bounded and with bounded first and second derivative.

It is likely that the results hold for more general functions $\psi$, but such a result would require more general technical tools than those developed in the next section. For example, if $\psi$ doesn't grow at least as $|x|^2$ as $x$ goes to infinity, then the logarithmic Sobolev inequality doesn't hold. In [DV], the LDP is proved for the case where $\psi$ is only superquadractic, and $\psi' = o(\psi)$.

It turns out that the proof of the upper bound in the convergence of $J_N$ is the same as the proof of [Q] of the lower bound for the LDP. We shall therefore only sketch the proofs of the upper bounds, and concentrate on the lower bounds in the $\Gamma$-convergence.

\subsection{Some technical estimates}

In this section, we give a few technical results, collected from various sources, which we shall use in the proofs of the large deviation principles. Most of them are classical results in the study of hydrodynamic limits, and we will often only give a brief sketch of the proofs, or simply refer to the original source.

We will use logarithmic Sobolev inequalities, which we now define: 

\begin{defn}
Let $X$ be a Riemannian manifold. A probability measure $\mu$ on $X$ is said to satisfy a LSI with constant $\rho > 0$ if, for any locally Lipschitz, nonnegative function $f \in \textsl{L}^1(\mu)$,
$$\int{f \log (f) d\mu} - \left(\int{f d\mu}\right)\log \left(\int{f d\mu}\right) \leq \frac{1}{\rho}\int{\frac{|\nabla f|^2}{2f}d\mu}.$$
\end{defn}

The following result was proven in [MO]: 

\begin{thm}
Under the assumption (\ref{hyp_psi}), the measures $\mu_N$ satisfy the logarithmic Sobolev inequality
$$\Ent_{\mu_N}(g) \leq C\int{\frac{|\nabla g|^2}{g} d\mu_N}$$
for any nonnegative, locally Lipschitz function $g$, with constant $C$ independent of the dimension $N$ and the mean spin $m$. Combined with the discrete Poincar\'e inequality, this implies
$$\Ent_{\mu_N}(g) \leq C\int{\frac{\langle A_0 \nabla g, \nabla g \rangle}{g} d\mu_N}$$
for some constant $C$ that is independent of the dimension and the mean spin.
\end{thm}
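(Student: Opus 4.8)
The statement contains two inequalities, and I would dispatch the second first, since it is elementary. For fixed $x \in X_{N,m}$ the intrinsic gradient $\nabla g(x)$ is tangent to the hyperplane, hence satisfies $\sum_{i=1}^N (\nabla g)_i(x) = 0$; the discrete Poincar\'e inequality on the cycle $\mathbb{Z}/N\mathbb{Z}$ applied to this vector gives $|\nabla g(x)|^2 \leq C_0\,\langle A_0\nabla g(x),\nabla g(x)\rangle$, where $C_0 = \big(N^2\cdot 4\sin^2(\pi/N)\big)^{-1} \leq 1/16$ for every $N\geq 2$ (using $\sin t \geq \tfrac{2}{\pi}t$ on $[0,\tfrac{\pi}{2}]$), a bound uniform in $N$ and $m$. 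Dividing by $g(x)$, integrating against $\mu_N$ and inserting the first inequality yields the second (with $C/16$ in place of $C$).

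For the first inequality the plan is the coarse-graining (two-scale) strategy. The one-site marginal $\nu(dx) \propto e^{-\psi(x)}dx$ on $\mathbb{R}$ satisfies a logarithmic Sobolev inequality with a universal constant: by (\ref{hyp_psi}), $\psi$ differs from $x \mapsto x^p/p$ by a bounded $C^2$ function, and $e^{-x^p/p}$ satisfies such an inequality (Gaussian when $p=2$, uniformly log-concave at infinity when $p>2$), so a Holley--Stroock bounded-perturbation argument applies. Tensorization then gives the unconstrained product $\bigotimes_{i=1}^N \nu$ on $\mathbb{R}^N$ the same constant, uniformly in $N$. The difficulty is that $\mu_N$ is this product conditioned on the affine constraint $\tfrac{1}{N}\sum x_i = m$, and conditioning on a macroscopic linear functional can a priori degrade the constant as $N \to \infty$.

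To rule this out I would apply the two-scale logarithmic Sobolev criterion of [GOVW]. Partition $\{1,\dots,N\}$ into consecutive blocks of a fixed size $K$, let $Y$ be the vector of block averages, and bound $\Ent_{\mu_N}(g)$ by the entropy of the conditional law $\mu_N(\cdot\mid Y)$ -- which factorizes over blocks into canonical Ginzburg-Landau measures on $K$ sites, with a LSI constant depending only on $K$ -- plus the entropy of the coarse-grained law $\bar\mu_N$ of $Y$, plus a cross term controlled by the spectral gap of $\mu_N(\cdot\mid Y)$, again depending only on $K$. The measure $\bar\mu_N$ is itself a canonical measure, on $N/K$ sites with effective single-site potential $K\psi_K$, where $e^{-K\psi_K}$ is the density of the average of $K$ i.i.d.\ copies of $\nu$; for $K$ large this effective potential is \emph{uniformly convex}, so $\bar\mu_N$ is strictly log-concave and Bakry--\'Emery gives it a LSI constant bounded below independently of $N/K$. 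Fixing $K$ once and for all large enough, the two-scale criterion then produces a logarithmic Sobolev inequality for $\mu_N$ with a constant that is a fixed positive number, independent of $N$.

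The main obstacle -- and the technical heart of [MO] -- is the convexity claim for the effective potential: since $\psi$ is not itself convex, showing that $\psi_K$ becomes and stays uniformly convex for large $K$ requires quantitative local central limit and moderate-deviation estimates for sums of i.i.d.\ variables with law $\nu$, uniformly over the relevant conditioning values, and this is precisely where the super-quadratic growth in (\ref{hyp_psi}) enters (it forces the tilted one-site laws to concentrate). A second layer of the obstacle is uniformity in the mean spin $m$: the single-site and block LSI constants, the Poincar\'e constant of $\mu_N(\cdot\mid Y)$ and the convexity constant of $\psi_K$ must all be shown insensitive to re-centering the system around an arbitrary typical spin value, again using (\ref{hyp_psi}). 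The remaining ingredient -- a uniform spectral gap for conservative Kawasaki dynamics -- is classical (Lu--Yau martingale method and later refinements), and I would simply invoke it.
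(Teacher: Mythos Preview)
The paper does not actually prove this theorem: it is stated as a result taken from [MO], with no argument given. So there is no ``paper's own proof'' to compare against; you have supplied more than the paper does.

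That said, your sketch is essentially the correct outline of the argument behind the cited result. The reduction of the second inequality to the first via the discrete Poincar\'e inequality on $\mathbb{Z}/N\mathbb{Z}$ is exactly right, including the explicit uniform bound on the constant. For the first inequality, the two-scale decomposition (block conditioning, LSI for fixed-size blocks, LSI for the coarse-grained measure via uniform convexity of the effective potential $\psi_K$) is precisely the [GOVW] scheme, and you correctly locate the hard step: proving that $\psi_K$ becomes uniformly convex for large $K$, uniformly in the tilt parameter, which is where the local CLT and moderate-deviation estimates of [MO] enter and where the growth assumption (\ref{hyp_psi}) is used.

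One small inaccuracy: the ``remaining ingredient'' you call a uniform spectral gap for conservative Kawasaki dynamics is not quite what the two-scale criterion consumes. What is needed is a uniform LSI (or at least Poincar\'e) constant, with respect to the \emph{Euclidean} gradient, for the conditional measure on a single block of fixed size $K$; since $K$ is fixed this follows directly from the single-site LSI and a bounded-perturbation argument, and invoking Lu--Yau is unnecessary. The Lu--Yau/Kawasaki spectral gap is a different (and harder) statement about scaling in the block size, which does not enter here.
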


As a consequence of this result and of [GOVW, Lemma 26], we have the following result: 

\begin{lem} \label{lem_borne_l2}
Let $f_N$ be a sequence of probability densities with respect to $\mu_N$ such that 
$$\underset{N}{\sup} \hspace{1mm} \frac{1}{N}\int{\frac{\langle A_0 \nabla f, \nabla f \rangle}{f} d\mu_N} < +\infty.$$
Then we also have
$$\underset{N}{\sup} \hspace{1mm} \frac{1}{N}\int{\sum |x_i|^2 f(x) \mu_N(dx)} < +\infty.$$
This result still holds if we replace $\mu_N$ by another sequence of measures with bounded second moment and which satisfy a LSI with uniform constant.
\end{lem}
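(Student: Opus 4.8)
The plan is to bound $\frac1N\int\sum_i|x_i|^2 f_N\,d\mu_N$ by transferring an $O(N)$ bound on the second moment of the equilibrium measure $\mu_N$ to the tilted measure $f_N\mu_N$, paying a price measured by $\Ent_{\mu_N}(f_N)$, which the logarithmic Sobolev inequality of [MO] in turn controls by the Fisher-information quantity appearing in the hypothesis.

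The transfer is the content of [GOVW, Lemma 26]: since $\mu_N$ satisfies a logarithmic Sobolev inequality with a constant independent of $N$, it satisfies Talagrand's transport inequality $W_2(f_N\mu_N,\mu_N)^2 \le C\,\Ent_{\mu_N}(f_N)$ for the Euclidean quadratic Wasserstein distance (Otto--Villani), with $C$ dimension-free; combining $\left(\int\|x\|^2 d\rho\right)^{1/2} = W_2(\delta_0,\rho)$ with the triangle inequality for $W_2$ through the point $\mu_N$ yields
\begin{equation*}
\frac1N\int\|x\|^2 f_N\,d\mu_N \le \frac2N\int\|x\|^2 d\mu_N + \frac{2C}{N}\Ent_{\mu_N}(f_N).
\end{equation*}
It then remains to bound both terms on the right uniformly in $N$. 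For the second, apply the logarithmic Sobolev inequality of [MO] in its $A_0$-form, $\Ent_{\mu_N}(f_N) \le C\int \frac{\langle A_0\nabla f_N,\nabla f_N\rangle}{f_N}\,d\mu_N$, so that $\frac1N\Ent_{\mu_N}(f_N)$ is bounded by hypothesis. For the first, linearize the \emph{Euclidean} logarithmic Sobolev inequality of [MO] into a spectral-gap inequality $\var_{\mu_N}(g)\le C\int|\nabla g|^2\,d\mu_N$ with dimension-free $C$, and apply it to the coordinate functions $g=x_i$, whose gradients on the hyperplane $X_{N,m}$ have squared norm $1-1/N\le 1$; this gives $\var_{\mu_N}(x_i)\le C$, and since $\mu_N$ is exchangeable we have $\E_{\mu_N}[x_i]=m$, whence $\int x_i^2\,d\mu_N\le C+m^2$ and $\frac1N\int\|x\|^2 d\mu_N\le C+m^2$. (This last bound is also a standard equivalence-of-ensembles estimate for the canonical measure under assumption (\ref{hyp_psi}).) Combining the three estimates proves the lemma; the generalization follows verbatim, with the first term now bounded by the assumed bounded second moment and the other two steps using only the assumed uniform logarithmic Sobolev inequality.

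I expect the only genuinely substantial input to be the dimension-free logarithmic Sobolev constant --- the cited theorem of [MO] --- so that within this lemma the real work is bookkeeping: the $A_0$-Dirichlet form must be used for the entropy term, but the plain Euclidean form for both the transport inequality and the Poincar\'e estimate on the $x_i$, since $A_0$ penalizes the local functions $x_i$ far too heavily ($\langle A_0\nabla x_i,\nabla x_i\rangle = 2N^2$) to be of any use there.
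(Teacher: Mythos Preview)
Your proposal is correct and follows the paper's approach: the paper simply cites [GOVW, Lemma 26] together with the [MO] logarithmic Sobolev inequality, and what you have written is precisely the content of that lemma (LSI $\Rightarrow$ Talagrand $\Rightarrow$ moment transfer) combined with the uniform second-moment bound on $\mu_N$. Your argument also matches, step for step, the paper's own detailed proof of the $L^p$ analogue in the next lemma, so there is no meaningful difference in route.
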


This can be generalized to the following result: 

\begin{lem} \label{lem_borne_lp}
Assume that $\psi$ is of the form $\frac{1}{p}|x|^p + \delta\psi(x)$. Let $f_N$ be a sequence of probability densities with respect to $\mu_N$ such that 
$$\underset{N}{\sup} \hspace{1mm} \frac{1}{N}\int{\frac{\langle A_0 \nabla f, \nabla f \rangle}{f} d\mu_N} < +\infty.$$
Then we also have
$$\underset{N}{\sup} \hspace{1mm} \frac{1}{N}\int{\sum |x_i|^p f(x) \mu_N(dx)} < +\infty.$$
\end{lem}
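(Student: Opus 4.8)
The plan is to follow the proof of Lemma \ref{lem_borne_l2} almost verbatim, with the second moment replaced throughout by the $p$-th moment; the only ingredient that genuinely has to be redone is an exponential integrability estimate for the reference measures $\mu_N$ adapted to the growth $\psi(x)\sim\frac1p|x|^p$. The two structural inputs are the same as in the $p=2$ case: the dimension- and mean-spin-free logarithmic Sobolev inequality of [MO], which turns the Dirichlet-form hypothesis into a bound on relative entropy, together with the entropy (Donsker--Varadhan) inequality, which turns a relative-entropy bound into a bound on exponential moments.

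Concretely, I would first note that by the logarithmic Sobolev inequality of [MO] (after incorporating the discrete Poincar\'e inequality, exactly as in the theorem quoted above),
$$\frac1N\,\Ent_{\mu_N}(f_N)\ \le\ \frac{C}{N}\int{\frac{\langle A_0\nabla f_N,\nabla f_N\rangle}{f_N}\,d\mu_N}\ \le\ C',$$
with $C,C'$ independent of $N$ and of the mean spin $m$. Next, using the variational characterisation of relative entropy recalled above, which gives the entropy inequality $\int g\,d(f\mu)\le\frac1\lambda\Ent_\mu(f)+\frac1\lambda\log\int e^{\lambda g}\,d\mu$ for every $\lambda>0$ and every density $f$, applied with $g=\sum_i|x_i|^p$, one obtains
$$\frac1N\int{\sum_i|x_i|^p\,f_N\,d\mu_N}\ \le\ \frac1{\lambda N}\,\Ent_{\mu_N}(f_N)\ +\ \frac1{\lambda N}\log\int{e^{\lambda\sum_i|x_i|^p}\,d\mu_N}.$$
In view of the entropy bound, everything reduces to exhibiting one value $\lambda>0$ for which $\sup_N\frac1N\log\int e^{\lambda\sum_i|x_i|^p}\,d\mu_N<\infty$.

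For that last estimate I would exploit the coercivity in (\ref{hyp_psi}): since $\delta\psi$ is bounded, $\psi(x)\ge\frac1p|x|^p-\|\delta\psi\|_\infty$, so for $0<\lambda<1/p$ one has $e^{\lambda\sum_i|x_i|^p-\sum_i\psi(x_i)}\le e^{N\|\delta\psi\|_\infty}\prod_i e^{-(\frac1p-\lambda)|x_i|^p}$, whose single-site integral $\int_\R e^{-(\frac1p-\lambda)|x|^p}\,dx$ is finite. The real difficulty, and the step I expect to be the main obstacle, is that $\mu_N$ is supported on the hyperplane $X_{N,m}$, so one must compare the constrained integral $\int_{X_{N,m}}e^{\lambda\sum|x_i|^p-\sum\psi(x_i)}\,\mathcal{L}^{N-1}$ with the partition function $Z_N=\int_{X_{N,m}}e^{-\sum\psi(x_i)}\,\mathcal{L}^{N-1}$. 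I would handle this with the standard equivalence-of-ensembles argument: tilting by $e^{\theta\sum x_i}$ leaves the measure conditioned on $\{\frac1N\sum x_i=m\}$ unchanged, and choosing $\theta$ so that the tilted single-site law has mean $m$, each constrained integral equals the corresponding product integral over $\R^N$ times the density at $m$ of the empirical mean of $N$ i.i.d. tilted variables, a quantity of order $N^{-1/2}$ that is bounded above and below uniformly for $m$ in a compact set by a local central limit theorem. These prefactors cancel at the exponential scale, leaving $\frac1N\log\int e^{\lambda\sum|x_i|^p}\,d\mu_N\le\|\delta\psi\|_\infty+\log\int_\R e^{-(\frac1p-\lambda)|x|^p}\,dx-\frac1N\log Z_N+o(1)$, and $-\frac1N\log Z_N$ is bounded by the same estimate (or is classical). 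This is exactly the $p$-analogue of the reasoning behind [GOVW, Lemma 26] used for Lemma \ref{lem_borne_l2}, and specialising to $p=2$ recovers that lemma.
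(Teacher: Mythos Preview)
Your argument is correct but follows a genuinely different route from the paper's. After the common first step of using the [MO] logarithmic Sobolev inequality to bound $\frac{1}{N}\Ent_{\mu_N}(f_N\mu_N)$, the paper does \emph{not} generalise the [GOVW, Lemma 26] computation to higher moments; instead it invokes a $W_p$ transport--entropy inequality proved in [F2], namely $W_p^p(\nu_N,\mu_N)\le C\,\Ent_{\mu_N}(\nu_N)$ with $C$ independent of $N$, combines it with the $W_p$-Lipschitz continuity of $p$-th moments ([Vi2, Proposition 7.29]) to get $\int\sum|x_i|^p\,d\nu_N\le C\,\Ent_{\mu_N}(\nu_N)+C\int\sum|x_i|^p\,d\mu_N$, and then observes directly that $\int\sum|x_i|^p\,d\mu_N\le CN$. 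Your approach via the entropy (Donsker--Varadhan) inequality plus an exponential moment bound $\sup_N\frac{1}{N}\log\int e^{\lambda\sum|x_i|^p}\,d\mu_N<\infty$, handled by equivalence of ensembles/local CLT, is the natural $p$-analogue of [GOVW, Lemma 26] and works just as well. The trade-off is that the paper's route is shorter and packages the hard analysis into the single external input from [F2], whereas your route is more self-contained but requires carrying out the local CLT comparison between the constrained and product integrals (with the two different single-site potentials $\psi$ and $\psi-\lambda|x|^p$) explicitly; this is standard but not entirely trivial, and you should state clearly that the two prefactors of order $N^{-1/2}$, though different, both contribute $o(1)$ to $\frac{1}{N}\log(\cdot)$.
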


\begin{proof}
It has been shown in [F2] that, under our assumptions on $\psi$, $\mu_N$ satisfies the following transport-entropy inequality: for any probability measure $\nu_N$, 
$$W_p^p(\nu_N, \mu_N) \leq C\Ent_{\mu_N}(\nu_N)$$
for some constant $C > 0$ that does not depend on $N$, and $W_p$ is the $L^p$ Wasserstein distance
$$W_p^p(\nu, \mu) := \underset{\pi}{\inf} \hspace{1mm} \int{\sum |x_i - y_i|^p \pi(dx, dy)}.$$ 
From the $W_p$-Lipschitz continuity of p-moments (see [Vi2, Proposition 7.29]), we know that
$$\left(\int{\sum |x_i|^p\nu_N(dx)}\right)^{1/p} - \left(\int{\sum |x_i|^p\mu_N(dx)}\right)^{1/p} \leq W_p(\mu_N, \nu_N)$$
so that
$$\int{\sum |x_i|^p\nu_N(dx)} \leq C\Ent_{\mu_N}(\nu_N) + C\int{\sum |x_i|^p\mu_N(dx)}$$
Since $\mu_N$ satisfies a logarithmic Sobolev inequality, $\Ent_{\mu_N}(\nu_N) \leq CN$, and it is also easy to see that 
$$\int{\sum |x_i|^p\mu_N(dx)} \leq CN,$$
which concludes the proof.
\end{proof}

We now give a version the version of the local Cram\'er theorem we shall use:

\begin{thm} \label{lct}
Let $(a_i)$ be some sequence of real numbers. We define
$$\psi_K(m) := -\frac{1}{K}\log \int_{X_{K,m}}{\exp(\sum a_i x_i + \psi(x_i))dx}$$
and 
$$\varphi_{K}(m) := \underset{\sigma \in \mathbb{R}}{\sup} \hspace{1mm} \left(\sigma m - \frac{1}{K} \underset{i = 1}{\stackrel{K}{\sum}} \hspace{1mm} \log \int_{\mathbb{R}}{\exp((\sigma + a_i)x - \psi(x))dx} \right).$$
We then have , for any $L > 0$ and any compact subset E of $\mathbb{R}$,
$$\underset{K \rightarrow \infty}{\lim} \hspace{1mm} \underset{a_1 .. a_K \in [-L, L]}{\sup}  \hspace{3mm} ||\psi_K - \varphi_K||_{\infty, E}  = 0.$$
In particular, if $a_i = \lambda(i/K)$ for some smooth function $\lambda$, then $\psi_K$ converges to 
$$\varphi_{\lambda}(m) := \underset{\sigma \in \mathbb{R}}{\sup} \hspace{1mm} \left(\sigma m - \int_0^1{\log \int_{\R}{\exp((\sigma + \lambda(\theta))x - \psi(x))dx}d\theta} \right).$$
uniformly on compact sets.
\end{thm}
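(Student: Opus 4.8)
The plan is to decompose the statement into three ingredients: (a) a fixed-dimension large deviation estimate, (b) a uniform equicontinuity argument to upgrade pointwise convergence to uniformity in both $m \in E$ and the external field $(a_i) \in [-L,L]^K$, and (c) identification of the limit. First I would fix attention on the function $\psi_K(m)$, which up to the normalization of Lebesgue measure on the hyperplane $X_{K,m}$ is $-\tfrac1K$ times the log of the density, at level $m$, of the empirical mean $\tfrac1K\sum Y_i$ where the $Y_i$ are independent with (tilted) law proportional to $\exp((a_i - \psi)(y))\,dy$. So $\psi_K$ is essentially the log-density of a sum of independent (non-identically distributed, but uniformly controlled) random variables. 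The function $\varphi_K(m)$ is exactly the Legendre–Fenchel transform of the arithmetic mean of the individual log-moment-generating functions $\Lambda_i(\sigma) := \log\int \exp((\sigma+a_i)x - \psi(x))\,dx$. The assumption (\ref{hyp_psi}) that $\psi(x) = \tfrac1p|x|^p + \delta\psi$ with $\delta\psi$ bounded and $C^2$ guarantees that each $\Lambda_i$ is finite, smooth, strictly convex, and — crucially — that these properties hold \emph{uniformly} in $i$ and in $a_i \in [-L,L]$ (uniform finiteness of all moments, uniform bounds on $\Lambda_i', \Lambda_i''$ on compacts, uniform lower bound on $\Lambda_i''$).

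The core estimate is a local (Edgeworth-type or Laplace/saddle-point) central limit estimate for the density of $\tfrac1K\sum Y_i$ at the point $m$: one writes the density via Fourier inversion, performs the change of measure that centers the sum at $m$ (i.e. tilts by the optimal $\sigma = \sigma(m)$ solving $\tfrac1K\sum \Lambda_i'(\sigma) = m$), and shows the remaining integral is $\exp(o(K))$, with the $o(K)$ controlled uniformly in $m\in E$ and $(a_i)\in[-L,L]^K$. This yields $\psi_K(m) = \varphi_K(m) + \tfrac1K R_K(m)$ with $\sup_{m\in E}\sup_{a\in[-L,L]^K}|R_K(m)| = o(K)$. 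I would get the uniformity in $(a_i)$ from the uniform (in $i$, in $a_i$) control of the $\Lambda_i$ and their derivatives noted above, which makes all the constants in the local CLT depend only on $L$, $E$, $p$, and $\|\delta\psi\|$. An alternative, perhaps cleaner route avoiding sharp local CLT asymptotics: prove matching upper and lower bounds for $\tfrac1K\log$ of the density directly — the upper bound by Chebyshev/exponential tilting (giving $\psi_K \geq \varphi_K - o(1)$), the lower bound by tilting to make $m$ typical and then using only a \emph{weak} local CLT (the tilted sum has a density bounded below on a neighborhood of its mean, uniformly), which suffices since we only need the logarithmic rate. Since $E$ is compact and $\sigma(m)$ ranges over a compact set uniformly, one then covers $E$ by finitely many balls and uses equicontinuity of both $\psi_K$ and $\varphi_K$ (again from uniform derivative bounds) to pass from pointwise to uniform convergence.

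For the final assertion, when $a_i = \lambda(i/K)$ with $\lambda$ smooth (hence bounded, so $L := \|\lambda\|_\infty$ works), the arithmetic mean $\tfrac1K\sum_{i=1}^K \Lambda_{\lambda(i/K)}(\sigma)$ is a Riemann sum for $\int_0^1 \log\int_\R \exp((\sigma+\lambda(\theta))x - \psi(x))\,dx\,d\theta$, converging uniformly for $\sigma$ in compacts by smoothness of $\lambda$ and continuity of $\sigma \mapsto \Lambda_{a}(\sigma)$ jointly in $(\sigma, a)$; Legendre transform is continuous with respect to uniform convergence on compacts of uniformly convex functions, so $\varphi_K \to \varphi_\lambda$ uniformly on $E$, and combined with the first part $\psi_K \to \varphi_\lambda$ uniformly on compact sets.

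The main obstacle I expect is making the $o(K)$ error term in the local estimate \emph{genuinely uniform} over the external fields $(a_1,\dots,a_K) \in [-L,L]^K$ — an uncountable family of inhomogeneous product measures in growing dimension. This is handled by never estimating any individual $\Lambda_i$ more precisely than through bounds (finiteness of moments, $c \le \Lambda_i'' \le C$ on the relevant compact $\sigma$-range, uniform integrability of $|x|^q e^{(\sigma+a)x - \psi(x)}$) that are themselves uniform in $a \in [-L,L]$; these follow from (\ref{hyp_psi}) because the dominant term $\tfrac1p|x|^p$ is unaffected by $a$ and a bounded perturbation $\delta\psi$, and a bounded linear tilt $a x$, only shift these uniform constants. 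One must also keep the saddle point $\sigma(m)$ in a fixed compact set: this holds because $m \in E$ compact and the map $\sigma \mapsto \tfrac1K\sum\Lambda_i'(\sigma)$ is, uniformly in $K$ and $(a_i)$, a bi-Lipschitz bijection onto an interval containing $E$ in its interior (for $L$ large relative to $E$; otherwise one restricts to the effective domain, which is harmless for the stated limit). With these uniformities in hand the argument reduces to a standard inhomogeneous local limit theorem, and the remaining steps (Riemann sum convergence, continuity of Legendre transform, finite cover of $E$) are routine.
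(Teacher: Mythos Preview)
The paper does not actually prove this theorem: immediately after the statement it writes ``A proof of this result can be found in [FM] or [K, Appendix A]'' and moves on. So there is no ``paper's own proof'' to compare against beyond those citations.

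That said, your plan is the standard route to such local Cram\'er theorems and is essentially what one finds in the cited references (Kosygina's Appendix A carries out exactly the tilting-plus-local-CLT argument you describe, and [FM] extends it to the inhomogeneous, uniformly-in-the-field setting you need here). The decomposition into (a) exponential tilting at the saddle point $\sigma(m)$, (b) a local limit estimate for the recentred density giving an $o(K)$ correction, and (c) uniformity via bounds on the $\Lambda_i$ that depend only on $L$, $E$, and $\psi$, is correct. Your identification of the main difficulty---making the $o(K)$ term uniform over the uncountable family $(a_1,\dots,a_K)\in[-L,L]^K$---and its resolution via uniform control on moments and on $\Lambda_i''$ coming from the superlinear growth of $\psi$ in (\ref{hyp_psi}) is exactly right. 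The final Riemann-sum-and-Legendre-transform step for $a_i=\lambda(i/K)$ is routine as you say.

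One small point: your parenthetical ``for $L$ large relative to $E$; otherwise one restricts to the effective domain'' is slightly garbled. The correct statement is that for fixed $E$ compact and $L$ fixed, the saddle point $\sigma(m)$ ranges over a compact set uniformly in $K$ and in $(a_i)\in[-L,L]^K$; this follows because $\sigma\mapsto\tfrac1K\sum\Lambda_i'(\sigma)$ is an increasing diffeomorphism of $\R$ onto $\R$ with derivative bounded below uniformly (by the uniform lower bound on $\Lambda_i''$), and its value at $\sigma=0$ is bounded uniformly (since $a_i\in[-L,L]$). There is no restriction needed.
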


A proof of this result can be found in [FM] or [K, Appendix A]. Roughly speaking, it says that local averages of a large number $K$ of spins behave like random variables satisfy a large deviation principle of speed $K$ and rate function $\varphi$.

The following proposition is a consequence of [GPV, Theorem 4.1]. 

\begin{prop} \label{prop_averaging}
Let $f_N$ be a sequence of probability densities with respect to $\mu_N$ which weakly converges to a deterministic profile $\rho$. Assume that
$$\frac{1}{N}\int{\frac{\langle A_0\nabla f, \nabla f \rangle}{f}d\mu_N} \leq C.$$
Then, for any smooth function $J : \T \rightarrow \R$ and bounded continuous function $F : \R^{2k+1} \rightarrow \R$, we have
$$\frac{1}{N}\int{\sum J(i/N)F(x_{i-k},..,x_{i+k})f_N(x)\mu_N(dx)} \longrightarrow \int_{\T}{J(\theta)\tilde{F}(\rho(\theta))d\theta}$$
where 
\begin{equation}
\tilde{F}(y) := \int{F(x_1,..,x_{2k+1})\mu^{\lambda, \otimes 2k+1}(dx)},
\end{equation}
with $\mu^{\lambda}(dx) = \frac{1}{Z}\exp(\lambda x - \psi(x))dx$ and $\lambda = \varphi'(y)$.
\end{prop}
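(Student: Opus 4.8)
The plan is to reduce the claim to the replacement (averaging) estimate of [GPV, Theorem 4.1], and then to feed in the assumed weak convergence of $f_N$ to the deterministic profile $\rho$. For $\ell\geq 1$ write $\bar{x}_{i,\ell} := \frac{1}{2\ell+1}\sum_{|j-i|\leq \ell} x_j$ for the empirical mean spin in the box of radius $\ell$ around site $i$, and for $\epsilon>0$ take $\ell=\lfloor \epsilon N\rfloor$. The first step is to establish
$$\limsup_{\epsilon\to 0}\ \limsup_{N\to\infty}\ \mathbb{E}_{f_N\mu_N}\!\left[\,\Bigl|\tfrac{1}{N}\sum_i J(i/N)\bigl(F(x_{i-k},\dots,x_{i+k})-\tilde{F}(\bar{x}_{i,\epsilon N})\bigr)\Bigr|\,\right]=0.$$
This is precisely the content of [GPV, Theorem 4.1]: the one-block estimate replaces $F(x_{i-k},\dots,x_{i+k})$, locally averaged over a mesoscopic box of radius $\ell$, by $\tilde{F}$ evaluated at the micro-canonical mean spin of that box, using the dimension-free logarithmic Sobolev inequality of [MO] together with the local Cram\'er theorem (Theorem \ref{lct}) to pass from micro-canonical to grand-canonical averages; the two-block estimate then replaces the mesoscopic mean spin by the macroscopic mean spin $\bar{x}_{i,\epsilon N}$. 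Both estimates are powered by the Dirichlet-form bound assumed on $f_N$, and the $L^p$-moment bound of Lemma \ref{lem_borne_lp} provides the uniform integrability needed to handle the non-compact spin space and the merely superlinear growth of $\psi$. Throughout one uses that $\tilde{F}$ is bounded, with $|\tilde{F}|\leq \|F\|_\infty$, and continuous, since $\lambda\mapsto \int F\,d\mu^{\lambda,\otimes(2k+1)}$ is smooth and $y\mapsto\varphi'(y)$ is continuous.

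\textbf{Passing to the profile.} Next I would use the hypothesis that $f_N$ converges weakly to $\rho$, i.e.\ that the empirical measure $\frac{1}{N}\sum_i \delta_{i/N}\, x_i$ converges to $\rho(\theta)\,d\theta$ in $\mathbb{P}_{f_N\mu_N}$-probability. This makes $\bar{x}_{i,\epsilon N}$ close, in an $i$-averaged sense, to $(\rho * \iota_\epsilon)(i/N)$, where $\iota_\epsilon$ is the normalized indicator of $[-\epsilon,\epsilon]$. Since $\tilde{F}$ is bounded and uniformly continuous and $J$ is bounded, this yields
$$\mathbb{E}_{f_N\mu_N}\!\left[\,\Bigl|\tfrac{1}{N}\sum_i J(i/N)\tilde{F}(\bar{x}_{i,\epsilon N}) - \int_{\mathbb{T}} J(\theta)\,\tilde{F}\bigl((\rho*\iota_\epsilon)(\theta)\bigr)\,d\theta\Bigr|\,\right]\xrightarrow[N\to\infty]{}0,$$
the error term also being absorbed by the moment bound of Lemma \ref{lem_borne_lp}. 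Letting $\epsilon\to 0$, one has $(\rho*\iota_\epsilon)(\theta)\to\rho(\theta)$ at every Lebesgue point of $\rho$, so by bounded convergence $\int_{\mathbb{T}} J\,\tilde{F}(\rho*\iota_\epsilon)\to\int_{\mathbb{T}} J\,\tilde{F}(\rho)$. Combining this with the first step and taking $N\to\infty$ then $\epsilon\to 0$ gives the asserted convergence.

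\textbf{Main obstacle.} The genuinely substantial step is the first one — the one-block and two-block estimates — but since this is exactly [GPV, Theorem 4.1], the work here is to verify that their hypotheses are met in the present setting. The delicate point is that the spins live on the non-compact space $\mathbb{R}$ with a potential growing only like $x^p$: this is precisely why the uniform LSI of [MO], the local Cram\'er theorem in the form of Theorem \ref{lct}, and the dimension-normalized $L^p$-moment bound of Lemma \ref{lem_borne_lp} are all required, in place of the simpler compact-spin version of the replacement lemma. The remaining ingredients — the regularity and boundedness of $\tilde{F}$, and the passage to the profile in the second step — are routine.
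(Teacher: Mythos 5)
Your proposal is correct and takes essentially the same route as the paper: the paper's own treatment consists of citing [GPV, Theorem 4.1] and noting that the superlinear moment bound $\int \sum \omega(x_i) f\,d\mu_N \leq CN$ required by GPV follows, under the assumptions on $\psi$, from the Dirichlet-form bound via Lemma \ref{lem_borne_l2} (the paper uses the $L^2$-moment bound with $\omega(x)=|x|^2$, where you invoke the stronger $L^p$-bound of Lemma \ref{lem_borne_lp}; either suffices). Your two-step decomposition into the one-/two-block replacement and then the passage to the profile is a faithful, more detailed account of why the proposition is "a consequence of" GPV's result, which the paper leaves implicit.
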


Note that, in [GPV], it was also required that there exists a superlinear function $\omega$ such that $\int{\sum \omega(x_i) f(x)\mu_N(dx)} \leq CN$. However, under our assumptions on $\psi$, the bound on $\int{\frac{\langle A_0\nabla f, \nabla f \rangle}{f}d\mu_N}$ implies that $\int{\sum |x_i|^2 f(x)\mu_N(dx)} \leq CN$, as we have seen in Lemma \ref{lem_borne_l2}.

\begin{prop} \label{technical_lgs}
Let $\rho$ be a smooth function on the torus, and define the probability density with respect to $\mu$
$$G_N(x) = \frac{1}{Z}\exp \left( \underset{i = 1}{\stackrel{N}{\sum}} \hspace{1mm} \varphi'(\rho(i/N))x_i \right).$$
Then 

(i) The measures $G_N\mu_N$ weakly converge to the deterministic profile $\rho$;

(ii) They satisfy a logarithmic Sobolev inequality, with a constant that only depends on $\rho$ and $\psi$, but which is uniform in $N$;

(iii) For any sequence of probability measures $\mu_N$ on $\R^N$, if 
$$\frac{1}{N}\Ent_{G_N\mu_N}(\nu_N) \longrightarrow 0,$$
then the sequence weakly converges to the deterministic profile $\rho$. Moreover, we then have
$$\frac{1}{N}\Ent_{\mu_N}(\nu_N) \longrightarrow \int{\varphi(\rho)d\theta} - \varphi\left(\int{\rho d\theta}\right).$$
\end{prop}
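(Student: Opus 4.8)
The plan is to prove the three assertions in the order (ii), (i), (iii), since the uniform logarithmic Sobolev inequality is the engine behind both the law of large numbers in (i) and the entropy estimates in (iii). \textbf{Part (ii).} I would observe that $G_N\mu_N$ is itself a Ginzburg--Landau Gibbs measure, on the same hyperplane $X_{N,m}$ and with the same discrete-Laplacian Dirichlet structure, but with single-site potentials $\psi_{N,i}(x) := \psi(x) - \varphi'(\rho(i/N))\,x$. Since $\rho$ is continuous on the compact torus and $\varphi'$ is continuous (indeed smooth, being the derivative of the Legendre transform of the smooth, strictly convex function $\sigma\mapsto\log\int e^{\sigma x-\psi(x)}dx$), the slopes $\varphi'(\rho(i/N))$ are bounded uniformly in $i$ and $N$; and subtracting a linear function leaves the Hessian unchanged, so each $\psi_{N,i}$ is again of the form ``uniformly convex plus a uniformly $C^2$-bounded perturbation'' as in (\ref{hyp_psi}). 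Hence the hypotheses of [MO] hold for $G_N\mu_N$ with constants independent of $N$ and $m$, which gives the claimed uniform LSI; in particular $G_N\mu_N$ satisfies a uniform spectral gap and, via [F2] (as in the proof of Lemma \ref{lem_borne_lp}), a uniform $W_p$ transport--entropy inequality.

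\textbf{Part (i).} It suffices to prove that $\frac1N\sum_i J(i/N)x_i \to \int_{\T}J\rho$ in $(G_N\mu_N)$-probability for every smooth $J$. Concentration is immediate from (ii): the function $g(x)=\frac1N\sum_i J(i/N)x_i$ has $|\nabla g|^2\le\|J\|_\infty^2 N^{-1}$, so the spectral gap gives $\var_{G_N\mu_N}(g)\le C\|J\|_\infty^2 N^{-1}\to 0$ --- indeed the LSI yields the stronger sub-Gaussian bound $(G_N\mu_N)(|g-\mathbb{E}_{G_N\mu_N}g|>\varepsilon)\le 2e^{-cN\varepsilon^2}$, which I will reuse in (iii). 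For the mean, note that $\mathbb{E}_{G_N\mu_N}[g]$ is the derivative at $\varepsilon=0$ of $\varepsilon\mapsto \frac1N\log\mathbb{E}_{\mu_N}[\exp(\sum_i(\varphi'(\rho(i/N))+\varepsilon J(i/N))x_i)]$; by the local Cram\'er theorem (Theorem \ref{lct}) this family of convex functions of $\varepsilon$ converges pointwise, hence its derivatives converge, and the Legendre duality $\varphi'(m)=\sigma\iff\mathbb{E}_{\mu^\sigma}[x]=m$ identifies the limit as $\int_{\T}J\rho$. (Equivalently, one may invoke equivalence of ensembles, [GPV], directly, $G_N\mu_N$ being the local Gibbs state attached to $\rho$.)

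\textbf{Part (iii), the main obstacle.} Assume $\frac1N\Ent_{G_N\mu_N}(\nu_N)\to 0$. For the weak convergence, combine the entropy inequality $\nu_N(B)\le\frac{\log 2+\Ent_{G_N\mu_N}(\nu_N)}{-\log (G_N\mu_N)(B)}$ with the exponential concentration of $G_N\mu_N$ around $\rho$ from (i)--(ii): for $B=\{|\frac1N\sum_i J(i/N)x_i-\int_{\T}J\rho|>\varepsilon\}$ the numerator is $o(N)$ while the denominator is $\ge cN\varepsilon^2$ for $N$ large, so $\nu_N(B)\to0$. For the entropy, use the chain rule $\tfrac{d\nu_N}{d\mu_N}=\tfrac{d\nu_N}{d(G_N\mu_N)}\,G_N$, which gives
\begin{equation}
\frac1N\Ent_{\mu_N}(\nu_N)=\frac1N\Ent_{G_N\mu_N}(\nu_N)+\frac1N\sum_i\varphi'(\rho(i/N))\,\mathbb{E}_{\nu_N}[x_i]-\frac1N\log Z_N,
\end{equation}
with $Z_N=\mathbb{E}_{\mu_N}[\exp(\sum_i\varphi'(\rho(i/N))x_i)]$. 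The first term vanishes by hypothesis. For the second, the weak convergence just proved gives $\frac1N\sum_i\varphi'(\rho(i/N))x_i\to\int_{\T}\varphi'(\rho)\rho$ in $\nu_N$-probability, and to pass to expectations I need uniform integrability: this follows by applying Lemma \ref{lem_borne_lp} to the density $G_N$ --- whose Dirichlet energy $\frac1N\int\frac{\langle A_0\nabla G_N,\nabla G_N\rangle}{G_N}d\mu_N$ stays bounded because $\nabla\log G_N$ is deterministic with $\langle A_0\nabla\log G_N,\nabla\log G_N\rangle=O(N)$ --- to get $\sup_N\frac1N\mathbb{E}_{G_N\mu_N}[\sum_i|x_i|^p]<\infty$, and then transporting this bound to $\nu_N$ through the $W_p$ transport--entropy inequality exactly as in the proof of Lemma \ref{lem_borne_lp}. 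Finally, writing $\frac1N\log Z_N$ as a difference of normalized log-partition functions with external fields $a_i=\varphi'(\rho(i/N))$ and $a_i=0$, Theorem \ref{lct} identifies its limit; combining this with the Legendre identity $\log\int e^{\varphi'(y)x-\psi(x)}dx=\varphi'(y)y-\varphi(y)$ and the compatibility $\int_{\T}\rho\,d\theta=m$ collapses the three limits into $\int_{\T}\varphi(\rho)\,d\theta-\varphi(\int_{\T}\rho\,d\theta)$. The genuinely delicate points are the uniform-integrability step and the bookkeeping of the chemical-potential term in the local Cram\'er computation; everything else is a routine combination of LSI-concentration and the entropy inequality.
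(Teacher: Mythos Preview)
Your proof is correct and follows the same overall architecture as the paper: (i) and (ii) are dispatched by citation (the paper cites [Y] and [FM] rather than arguing from scratch as you do, but your direct arguments are fine), and (iii) is obtained from the chain rule $\Ent_{\mu_N}(\nu_N)=\Ent_{G_N\mu_N}(\nu_N)+\int\log G_N\,d\nu_N$ together with the local Cram\'er theorem for the partition function.

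The one genuine methodological difference is in how you pass from $\frac1N\Ent_{G_N\mu_N}(\nu_N)\to 0$ to weak convergence of $\nu_N$ \emph{and} convergence of the linear moment $\frac1N\sum\varphi'(\rho(i/N))\,\mathbb{E}_{\nu_N}[x_i]$. You do this in two steps: sub-Gaussian concentration plus the entropy inequality for the convergence in probability, and then a separate uniform-integrability argument via the $W_p$ transport--entropy inequality of [F2] to upgrade to convergence of expectations. The paper instead goes through the Talagrand inequality in the $A_0^{-1}$ metric,
\[
\frac1N\,W_{2,A_0^{-1}}(\nu_N,G_N\mu_N)^2 \;\le\; \frac{C}{N}\,\Ent_{G_N\mu_N}(\nu_N)\longrightarrow 0,
\]
and observes that $\frac1N\langle A_0^{-1}x,x\rangle$ controls the $H^{-1}$ norm of the empirical profile. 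This is slicker: the linear functionals $x\mapsto\frac1N\sum J(i/N)x_i$ are Lipschitz for this metric, so the $W_2$ bound already yields convergence of expectations without a separate uniform-integrability step. Your route works, but the paper's $T_2$ argument buys both weak convergence and the moment convergence in a single stroke.
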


\begin{proof}
(i) is a classic large deviation result. See for example [Y]. (ii) was proven in [FM]. (iii) is a consequence of these two results, and we can prove it as follows.

Since the measures $G_N\mu_N$ satisfy a logarithmic Sobolev inequality, they also satisfy a transport entropy inequality, that is
$$W_{2,A_0^{-1}}(\nu_N, G_N\mu_N)^2 \leq C\Ent_{G_N\mu_N}(\nu_N).$$
The fact that we can use the inner product given by $A_0$ rather than the usual inner product follows from the discrete Poincar\'e inequality. Therefore, we have
$$\frac{1}{N}W_{2,A_0^{-1}}(\nu_N, G_N\mu_N)^2 \longrightarrow 0.$$
The result then follows from the fact that $(G_N\mu_N)$ weakly converges to $\rho$, and that $\frac{1}{N}\langle A_0 x, x \rangle \leq C||\bar{x}||_{H^{-1}}^2$.

The second part is a consequence of the identity
$$\frac{1}{N}\Ent_{\mu_N}(\nu_N) = \frac{1}{N}\Ent_{G_N\mu_N}(\nu_N) + \frac{1}{N}\int{\log G_N d\nu_N}$$
and the convergence
\begin{align}
\frac{1}{N}\int{\log G_N d\nu_N} &= \frac{1}{N}\int{\underset{i = 1}{\stackrel{N}{\sum}} \hspace{1mm} \varphi'(\rho_i)x_i \nu_N(dx)} \notag  \\
&- \frac{1}{N}\log\int{\exp\left(\underset{i = 1}{\stackrel{N}{\sum}} \hspace{1mm} \varphi'(\rho(i/N))x_i - \psi(x_i)\right)dx} + \frac{1}{N}\log\int{\exp\left(\underset{i = 1}{\stackrel{N}{\sum}} \hspace{1mm} - \psi(x_i)\right)dx} \notag \\
&\longrightarrow \int{\varphi(\rho(\theta))d\theta} - \varphi \left(\int{\rho(\theta)d\theta}\right).
\end{align}
This last convergence follows from the convergence of $(\nu_N)$ to the deterministic profile $\rho$ and Theorem \ref{lct}. A complete proof is given in Lemma 7.1 of [K].
\end{proof}

\begin{prop} \label{conv_renforcees}
Let $f_N$ be a sequence of probability densities with respect to $\mu_N$ which weakly converges to a deterministic profile $\rho$. Assume that
$$\frac{1}{N}\int{\frac{\langle A_0\nabla f, \nabla f \rangle}{f}d\mu_N} \leq C$$
and
$$\frac{1}{N}\int{\sum \omega(x_i)f_N(x)\mu_N(dx)} \leq C.$$
Then, for any sequence $(J^N)$ of step functions on the torus that are constant on the intervals $[(i-1)/N, i/N)$ and which converges in $H^1$ to a function $J$, we have
\begin{equation} \label{conv_ren1}
\frac{1}{N}\int{\sum J^N(i/N)x_i f_N\mu_N(dx)} \longrightarrow \int_{\T}{J(\theta)\rho(\theta)d\theta}
\end{equation}
and 
\begin{equation} \label{conv_ren2}
\frac{1}{N}\int{\sum J^N(i/N)\psi'(x_i) f_N\mu_N(dx)} \longrightarrow \int_{\T}{J(\theta)\varphi'(\rho(\theta))d\theta}.
\end{equation}
\end{prop}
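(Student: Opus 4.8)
The strategy is to deduce both convergences from the averaging result of Proposition \ref{prop_averaging}, using a regularization argument to pass from the test function $J$ to the local-averaging test functions allowed there, and the moment bounds of Lemmas \ref{lem_borne_l2} and \ref{lem_borne_lp} to control the error terms coming from the tails. Proposition \ref{prop_averaging} already gives, for a \emph{smooth} $J$ and a bounded continuous $F$, convergence of $\frac{1}{N}\sum J(i/N)F(x_{i-k},\dots,x_{i+k})$ against $f_N\mu_N$ to $\int J\tilde F(\rho)$. The functions we must handle, $x_i$ and $\psi'(x_i)$, are not bounded, and $J^N$ is only a step function converging to $J$ in $H^1$, so the proof proceeds in two reductions.

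First I would reduce the step functions $J^N$ to the limit $J$. Writing $\frac{1}{N}\sum J^N(i/N) x_i - \frac{1}{N}\sum J(i/N)x_i = \frac{1}{N}\sum (J^N-J)(i/N)x_i$, I bound this by Cauchy--Schwarz by $\big(\frac{1}{N}\sum (J^N-J)(i/N)^2\big)^{1/2}\big(\frac{1}{N}\sum \E_{f_N\mu_N}[x_i^2]\big)^{1/2}$; the first factor tends to $0$ since $J^N\to J$ in $H^1$ hence in $L^2$, and the second is bounded uniformly in $N$ by Lemma \ref{lem_borne_l2}. For \eqref{conv_ren2} the same manipulation works with $\psi'(x_i)$ in place of $x_i$; since $\psi(x)=\frac1p|x|^p+\delta\psi(x)$ with $\delta\psi$ having bounded derivative, $|\psi'(x_i)|\le C(1+|x_i|^{p-1})$, so I need $\frac1N\sum\E_{f_N\mu_N}[|x_i|^{2(p-1)}]$ bounded — this follows from Lemma \ref{lem_borne_lp} provided $2(p-1)\le p$, and otherwise (for $p>2$) from the extra hypothesis $\frac1N\int\sum\omega(x_i)f_N\mu_N\le C$ with $\omega$ superlinear, which via a de la Vallée-Poussin/uniform-integrability argument upgrades the $L^p$ bound; alternatively one truncates $\psi'$ at level $M$, handles the bounded part by Proposition \ref{prop_averaging}, and sends $M\to\infty$ using the moment bound to kill the tail uniformly in $N$. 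So it suffices to prove \eqref{conv_ren1} and \eqref{conv_ren2} with $J$ smooth.

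Second, with $J$ smooth, I apply Proposition \ref{prop_averaging} directly with $k=0$ and $F(x)=x$ (respectively $F(x)=\psi'(x)$), after truncating $F$ to a bounded continuous $F_M$ agreeing with $F$ on $[-M,M]$ and vanishing outside $[-M-1,M+1]$. Proposition \ref{prop_averaging} gives $\frac1N\sum J(i/N)F_M(x_i)\,f_N\mu_N(dx)\to\int_\T J(\theta)\widetilde{F_M}(\rho(\theta))\,d\theta$ with $\widetilde{F_M}(y)=\int F_M\,d\mu^{\varphi'(y)}$. Then I let $M\to\infty$: the error $\frac1N\sum J(i/N)(F-F_M)(x_i)$ is controlled by $\|J\|_\infty\,\frac1N\sum\E_{f_N\mu_N}[|F(x_i)|\mathbbm{1}_{|x_i|>M}]$, which goes to $0$ uniformly in $N$ by the moment bounds above (Markov's inequality plus the uniform $L^p$ or $L^\omega$ control), and on the limit side $\widetilde{F_M}(y)\to\int x\,d\mu^{\varphi'(y)}(x)=\rho$-consistency: indeed for $F(x)=x$ one has $\int x\,d\mu^{\varphi'(y)}(x)=y$ by the Legendre duality defining $\varphi$ (this is exactly the relation $\varphi'(y)=\lambda\iff \int x\,d\mu^\lambda=y$), giving the right-hand side $\int_\T J\rho$; and for $F(x)=\psi'(x)$ one has $\int\psi'(x)\,d\mu^{\varphi'(y)}(x)=\varphi'(y)$ after an integration by parts against the density $\propto e^{\lambda x-\psi(x)}$ (the boundary terms vanish by the growth of $\psi$), giving $\int_\T J\,\varphi'(\rho)$.

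The main obstacle is the lack of boundedness of $x_i$ and especially $\psi'(x_i)$: Proposition \ref{prop_averaging} is stated only for bounded continuous $F$, so the whole argument hinges on having \emph{uniform-in-$N$} integrability of these quantities under $f_N\mu_N$ strong enough to justify the truncation limit $M\to\infty$. For $\psi'(x_i)$ when $p$ is large this genuinely requires the superlinear-moment hypothesis $\frac1N\int\sum\omega(x_i)f_N\mu_N\le C$ rather than just the Dirichlet-form bound; checking that this hypothesis indeed yields the needed uniform integrability (via de la Vallée-Poussin) is the delicate point, and is the reason this proposition carries that extra assumption while Proposition \ref{prop_averaging} does not.
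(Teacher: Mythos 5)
Your overall strategy is correct and the two convergences follow from the same inputs the paper uses, but you take a genuinely different route for \eqref{conv_ren1}. The paper avoids Proposition \ref{prop_averaging} entirely for the first limit: it rewrites $\frac1N\sum J^N(i/N)x_i = \int J^N\bar x\,d\theta$, passes from $J^N$ to $J$ using the $H^1$--$H^{-1}$ duality pairing (so the error is $\|J^N-J\|_{H^1}\,(\int\|\bar x\|_{H^{-1}}^2 f_N\mu_N)^{1/2}$), and then reduces the remaining term to proving that $\int\|\bar x-\rho\|_{L^2}f_N\mu_N\to 0$, which is a $W_1$-Wasserstein convergence of $f_N\mu_N$ to $\delta_\rho$; this is deduced from the assumed weak convergence together with a tightness estimate supplied by Lemma \ref{lem_borne_l2} and cited from [Vi1, Theorem 7.12]. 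Your version instead feeds a truncated identity $F_M(x)=x\wedge M\vee(-M)$ into Proposition \ref{prop_averaging} and uses the second-moment bound to control the tail $M\to\infty$; this is valid and arguably more uniform with your treatment of \eqref{conv_ren2}, but the paper's Wasserstein argument is more direct and does not require checking that $\widetilde{F_M}(y)\to y$. Your $L^2$ Cauchy--Schwarz for the $J^N\to J$ reduction is a cosmetic variant of the paper's duality pairing.

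For \eqref{conv_ren2} you and the paper do essentially the same thing: cutoff $\psi'$ at level $\ell$, apply Proposition \ref{prop_averaging} to the bounded part, control the tail with the $L^p$ moment from Lemma \ref{lem_borne_lp}, and identify the limit of $\tilde\psi_\ell$ with $\varphi'$. One cautionary remark on ordering: your first reduction --- replacing $J^N$ by $J$ \emph{before} truncating --- requires $\frac1N\sum\E[|\psi'(x_i)|^2]$ bounded, i.e.\ an $L^{2(p-1)}$ moment, which the available Lemma \ref{lem_borne_lp} does not supply when $p>2$; you flag this and propose truncating first, and indeed that is the paper's order (truncate to $\psi'_\ell$, which is bounded, so only $\|J^N-J\|_{L^1}\to 0$ is needed to pass from $J^N$ to $J$, and the $L^p$ bound is used only once, uniformly in $N$, to remove the cutoff). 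Your explicit integration-by-parts verification that $\int\psi'\,d\mu^{\varphi'(y)}=\varphi'(y)$ is a correct shortcut where the paper simply invokes [GPV, Lemma 6.4] for the convergence of $\tilde\psi_\ell$ to $\varphi'$. Finally, note that after the reduction $J^N\to J$ the function $J$ is only $H^1(\T)$, not smooth as Proposition \ref{prop_averaging} requires; since $H^1(\T)\hookrightarrow C(\T)$ a further density-and-approximation step (or an extension of the proposition to continuous $J$) is needed, though this gap is also implicitly present in the paper's treatment of \eqref{conv_ren2}.
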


\begin{proof}
For the first part, notice that 
$$\left|\frac{1}{N}\int{\sum J^N(i/N)x_i f_N\mu_N(dx)} - \int{\int_{\T}{J(\theta)\bar{x}(\theta)d\theta}f_N(x)\mu_N(dx)}\right|$$
$$\leq ||J^N - J||_{H^1}\left(\int{||\bar{x}||_{H^{-1}}^2f_N(x)\mu_N(dx)}\right)^{1/2} \longrightarrow 0$$
and 
\begin{align}
&\left|\int{\int_{\T}{J(\theta)\bar{x}(\theta)d\theta}f_N(x)\mu_N(dx)}- \int_{\T}{J(\theta)\rho(\theta)d\theta}\right| \notag \\
&\hspace{1cm}\leq ||J||_{H^1}\left(\int{||\bar{x} - \rho||_{H^{-1}} f_N(x)\mu_N(dx)}\right) \notag \\
&\hspace{1cm} ||J||_{H^1}\left(\int{||\bar{x} - \rho||_{L^2} f_N(x)\mu_N(dx)}\right) \notag 
\end{align}
so we just have to show that 
$$\int{||\bar{x} - \rho||_{L^2} f_N(x)\mu_N(dx)} \longrightarrow 0.$$
This quantity is the Wasserstein distance $W_1$ between $f_N\mu_N$ and $\delta_{\rho}$ for the $L^2$ distance. Since we already have weak convergence, to show that there is convergence for $W_1$, according to [Vi1, Theorem 7.12], we just have to prove the following tightness estimate
$$\underset{R \rightarrow \infty}{\lim} \hspace{1mm} \underset{N \rightarrow \infty}{\limsup} \hspace{1mm} \int_{\sum |x_i|^2 \geq NR^2}{\sqrt{\frac{1}{N}\sum |x_i|^2} f_N(x)\mu_N(dx)} = 0.$$
This estimate automatically follows from the bound
$$\underset{N}{\sup} \frac{1}{N}\int{\sum |x_i|^2f_N(x)\mu_N(dx)} < +\infty$$
that was given by Lemma \ref{lem_borne_l2}.

For the second part, we give a very brief sketch of the method of proof that was used in [GPV]. Let $\psi_{\ell}$ be a cutoff of $\psi'$ at level $\ell > 0$, that is 
$$\psi_{\ell}(x) = \psi'(x) \text{ if } |\psi'(x)| \leq \ell, \psi_{\ell}(x) = \pm \ell \text{ if not.} $$
From the bound of Lemma \ref{lem_borne_lp}, we can deduce 
\begin{equation}
\frac{1}{N}\int{\sum J^N(i/N)\psi'_{\ell}(x_i) f_N\mu_N(dx)} \underset{\ell \longrightarrow \infty}{\lim} \hspace{1mm} \frac{1}{N}\int{\sum J^N(i/N)\psi'(x_i) f_N\mu_N(dx)}
\end{equation}
uniformly in $N$, since $\psi$ goes to infinity faster than $|\psi'|$. Moreover, from Proposition \ref{prop_averaging}, we obtain
\begin{equation}
\frac{1}{N}\int{\sum J^N(i/N)\psi'_{\ell}(x_i) f_N\mu_N(dx)} \underset{N \longrightarrow \infty}{\lim} \hspace{1mm}  \int_{\T}{J(\theta)\tilde{\psi}_{\ell}(\rho(\theta))d\theta},
\end{equation}
so all we need to do is show that $\tilde{\psi}_{\ell}$ converges to $\varphi'$, which was done in [GPV, Lemma 6.4].
\end{proof}

\begin{rmq}
Similarly, under the assumption that $\int_0^T{\int{\frac{\langle A_0 \nabla f, \nabla f \rangle}{f}f\mu_N}dt} \leq CN$ uniformly in $N$, then (\ref{conv_ren1}) and (\ref{conv_ren2}) hold in a time-integrated sense.
\end{rmq}

We also give a priori estimates on weak limits of sequences of probability measures, obtained as direct consequences of [GPV], Lemmas 6.3 and 6.6:

\begin{lem} \label{limites_dans_h1}
Under our assumptions on $\psi$, for any sequence of probability $f_N$ with respect to $\mu_N$ that weakly converges to a deterministic trajectory $\rho$, such that 
$$\underset{N}{\sup} \hspace{1mm} \frac{1}{N}\int{\frac{\langle A_0 \nabla f, \nabla f \rangle}{f} d\mu_N} \leq C$$
we have
$$\int_{\T}{\varphi(\rho(\theta))d\theta} \leq C$$
and
$$\int_{\T}{(\partial_{\theta}\varphi(\rho)(\theta))^2d\theta} \leq C.$$
\end{lem}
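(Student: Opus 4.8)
The plan is to obtain both inequalities from the classical a priori estimates for the Ginzburg--Landau model, namely [GPV, Lemmas 6.3 and 6.6], transported to the present continuous-spin setting; the work reduces to checking that the hypotheses of those lemmas are available here, which is exactly what the technical results collected in this section provide.

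For the first bound I would argue as follows. By the logarithmic Sobolev inequality for $\mu_N$ (the theorem of [MO] recalled above), together with the discrete Poincar\'e inequality, the hypothesis $\sup_N N^{-1}\int\frac{\langle A_0\nabla f,\nabla f\rangle}{f}\,d\mu_N\le C$ gives $N^{-1}\Ent_{\mu_N}(f_N)\le C$. On the other hand, the local Cram\'er theorem (Theorem \ref{lct}) shows that the empirical spin profile under $\mu_N$ satisfies a large deviation principle of speed $N$ with good rate function $\eta\mapsto\int_\T\varphi(\eta)\,d\theta-\varphi(m)$, where $m=\int_\T\rho\,d\theta$ is the (fixed) mean spin. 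A standard consequence of this principle---[GPV, Lemma 6.3], or part (i) of Theorem \ref{thm_mariani} applied to the reference sequence $(\mu_N)$---is that $\liminf_N N^{-1}\Ent_{\mu_N}(f_N)\ge\int_\T\varphi(\rho)\,d\theta-\varphi(m)$ whenever $f_N\mu_N\to\rho$ weakly. Since $\varphi$ is finite and $m$ is fixed, comparing the two bounds yields $\int_\T\varphi(\rho)\,d\theta\le C$.

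For the second bound I would follow the coarse-graining argument of [GPV, Lemma 6.6]. Grouping the $N$ sites into blocks of an intermediate size $K$ and using the contraction of Dirichlet forms under the block-averaging map, one transfers the uniform bound on $N^{-1}\int\frac{\langle A_0\nabla f,\nabla f\rangle}{f}\,d\mu_N$ to a corresponding bound for the law of the block averages; by the local Cram\'er theorem the associated coarse-grained reference measure is, at leading order, of Ginzburg--Landau type with single-site free energy $\varphi$, and an energy-comparison (convexity) argument then shows that this coarse-grained Dirichlet form controls the discrete $\dot H^1$ seminorm of the block profile of $\varphi'$. Passing to the limit---first $N\to\infty$, then $K\to\infty$---and replacing the block averages of $\psi'$ by $\varphi'(\rho)$ by means of Propositions \ref{prop_averaging} and \ref{conv_renforcees} (which is where the moment bound of Lemma \ref{lem_borne_lp} enters), one obtains $\varphi'(\rho)\in H^1(\T)$ together with a uniform bound on $\|\partial_\theta\varphi'(\rho)\|_{L^2}$. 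Since $H^1(\T)\hookrightarrow L^\infty$ this forces $\rho$ to be bounded, so that $\big|\partial_\theta\varphi(\rho)\big|=\big|\tfrac{\varphi'(\rho)}{\varphi''(\rho)}\big|\,\big|\partial_\theta\varphi'(\rho)\big|\le C\,\big|\partial_\theta\varphi'(\rho)\big|$ on the range of $\rho$, which gives the stated estimate.

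The step I expect to be the real obstacle is the one hidden inside Propositions \ref{prop_averaging}--\ref{conv_renforcees}: the replacement of the microscopic observable $\psi'(x_i)$, which is unbounded under our assumptions on $\psi$, by the macroscopic quantity $\varphi'(\rho)$. This is precisely the one-block/two-block estimates of [GPV], and their extension to an unbounded single-site potential is handled here by cutting $\psi'$ off at a level $\ell$ and letting $\ell\to\infty$, the cutoff being controlled uniformly in $N$ by the $L^p$ moment bound of Lemma \ref{lem_borne_lp}. The remaining ingredients---the contraction and energy-comparison lower bound for the coarse-grained Dirichlet form, and the passage between the $A_0$-weighted form and the plain discrete $H^1$ seminorm via the discrete Poincar\'e inequality---are routine.
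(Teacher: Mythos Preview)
Your proposal is correct and follows exactly the route the paper takes: the paper gives no argument of its own here and simply defers to [GPV, Lemmas 6.3 and 6.6], which is precisely the entropy/LDP lower bound plus the coarse-graining (one-block/two-block) estimate you sketch, with the adaptations to the present unbounded-$\psi$ setting supplied by the LSI of [MO] and the moment bounds of Lemmas \ref{lem_borne_l2}--\ref{lem_borne_lp}. One small remark: the displayed conclusion in the paper reads $\partial_\theta\varphi(\rho)$ but is used later (and in [GPV]) as $\partial_\theta\varphi'(\rho)\in L^2$, so your detour through the chain rule is unnecessary---the intended statement is the $H^1$ bound on $\varphi'(\rho)$ you obtained directly.
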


Finally, we shall need the following lower bound on the slope of absolutely continuous curves.

\begin{lem} \label{lem_h-1}
Let $(\nu_t)$ be an absolutely continuous curve of probability measures on $\R^n$, which is equipped with a Riemannian tensor $(A^{-1}(x))$ satisfying the assumptions (\ref{assump-tensor1}) and (\ref{assump-tensor2}). Then we have, for any smooth function $V : [0,T] \times \R^n \longrightarrow \R$, 
\begin{align}
\int{|\dot{\nu}_t|^2dt} &\geq 2\int{V(T,x)\nu_T(dx)} - 2\int{V(0,x)\nu_0(dx)} \notag \\
& -2\int_0^T{\int{\frac{\partial V}{\partial t}(t,x)\nu_t(dx)}dt} - \int_0^T{\int{\langle A(x)\nabla V, \nabla V \rangle \nu_t(dx)}dt}
\end{align}
\end{lem}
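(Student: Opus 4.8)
The plan is to combine the Benamou--Brenier representation of the metric derivative (quoted above) with the continuity equation attached to the curve $(\nu_t)$, and then to apply a pointwise Young inequality adapted to the metric tensor $G=A^{-1}$. First I would dispose of the trivial case: if $\int_0^T|\dot\nu_t|^2\,dt=+\infty$ there is nothing to prove, so I assume this quantity is finite (and think of $V$ as smooth with controlled spatial growth — e.g.\ bounded derivatives, the general case following by truncation, or being subsumed in the trivial case when the right-hand side equals $-\infty$). Under this finiteness assumption, the characterization of absolutely continuous curves in Wasserstein space (see [AGS] and [L, Theorem 2.4]) furnishes a Borel velocity field $(w_t)$ with $\dot\nu_t+\mathrm{div}(w_t\nu_t)=0$ in the distributional sense and $\int\langle G(x)w_t,w_t\rangle\,d\nu_t=|\dot\nu_t|^2$ for almost every $t$, where $G=A^{-1}$.

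Next I would test this continuity equation against the time-dependent function $V$. The standard argument (differentiation under the integral sign for the continuity equation with a time-dependent smooth test function, as in [AGS]) shows that $t\mapsto\int V(t,x)\,\nu_t(dx)$ is absolutely continuous with
\[
\frac{d}{dt}\int V(t,x)\,\nu_t(dx)=\int\frac{\partial V}{\partial t}(t,x)\,\nu_t(dx)+\int\langle\nabla V(t,x),w_t(x)\rangle\,\nu_t(dx)
\]
for a.e.\ $t$. Integrating over $[0,T]$ gives
\[
\int V(T,x)\,\nu_T(dx)-\int V(0,x)\,\nu_0(dx)-\int_0^T\!\!\int\frac{\partial V}{\partial t}\,\nu_t(dx)\,dt=\int_0^T\!\!\int\langle\nabla V,w_t\rangle\,\nu_t(dx)\,dt.
\]

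Then comes the elementary inequality: since $G=A^{-1}$ is symmetric positive definite by (\ref{assump-tensor1}), for any vectors $a,b$ one has $\langle a,b\rangle=\langle G^{-1/2}a,G^{1/2}b\rangle\le\tfrac12\langle Aa,a\rangle+\tfrac12\langle Gb,b\rangle$ (and likewise with $-\langle a,b\rangle$ on the left, so the sign convention in the continuity equation is immaterial). Taking $a=\nabla V(t,x)$, $b=w_t(x)$, integrating against $\nu_t$ and over $t$, and using $\int\langle Gw_t,w_t\rangle\,d\nu_t=|\dot\nu_t|^2$, I get
\[
\int_0^T\!\!\int\langle\nabla V,w_t\rangle\,\nu_t(dx)\,dt\le\tfrac12\int_0^T\!\!\int\langle A\nabla V,\nabla V\rangle\,\nu_t(dx)\,dt+\tfrac12\int_0^T|\dot\nu_t|^2\,dt.
\]
Substituting this into the previous display and rearranging yields precisely the asserted lower bound on $\int_0^T|\dot\nu_t|^2\,dt$.

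The only genuinely delicate point is the second step: making rigorous the chain rule for the \emph{time-dependent} test function $V$, which rests on the a.e.\ weak differentiability of $t\mapsto\nu_t$ along the continuity equation and on an approximation of $V$ by functions with compact support (or adequate decay) in order to keep all the integrals meaningful when $\int_0^T|\dot\nu_t|^2\,dt$ is finite but the spatial growth of $V$ is not a priori under control; once this is set up, the remainder is the one-line Young inequality above. The hypotheses (\ref{assump-tensor1})--(\ref{assump-tensor2}) are used only to ensure that $A=G^{-1}$ is uniformly bounded and measurable, so that the quadratic forms $\langle A\nabla V,\nabla V\rangle$ and $\langle Gw_t,w_t\rangle$ are well defined and the velocity-field theory applies.
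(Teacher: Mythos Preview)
Your proposal is correct and follows essentially the same route as the paper: invoke the velocity-field representation from [L, Theorem 2.4] for the absolutely continuous curve, use the continuity equation to relate $\int\langle\nabla V,w_t\rangle\,d\nu_t$ to the boundary and $\partial_t V$ terms, and apply the pointwise Young inequality $\langle A^{-1}w,w\rangle\geq 2\langle w,\nabla V\rangle-\langle A\nabla V,\nabla V\rangle$. The only cosmetic difference is that the paper applies the pointwise inequality first and then integrates by parts, whereas you integrate by parts first and then apply Young; the ingredients are identical.
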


\begin{proof}
From [L, Theorem 2.4], we know that there exists a vector field $v_t$ such that 
\begin{equation} \label{celem}
\dot{\nu}_t + \text{div}(v_t\nu_t) = 0
\end{equation}
and
\begin{equation}
\int_0^T{|\dot{\nu}_t|^2dt} = \int_0^T{\int{\langle A^{-1}(x)v_t(x), v_t(x) \rangle \nu_t(dx)}dt}.
\end{equation}
Since we have 
$$\langle A^{-1}(x)v_t(x), v_t(x) \rangle \geq 2\langle v_t(x), \nabla V(t,x) \rangle - \langle A(x) \nabla V(t,x), \nabla V(t,x) \rangle$$
for any $t$ and $x$, we get
\begin{align}
\int_0^T{|\dot{\nu}_t|^2dt} &\geq 2\int_0^T{\int{\langle v_t(x), \nabla V(t,x) \rangle \nu_t(dx)}dt} \notag \\
 \hspace{1cm} - \int_0^T{\int{\langle A(x) \nabla V(t,x), \nabla V(t,x) \rangle\nu_t(dx)}dt} \notag 
\end{align}
Using (\ref{celem}) to do an integration by parts on the first term, the result immediately follows.
\end{proof}

\subsection{Large deviations for the GL model in a random environment}

In this section, we shall be interested in the large deviations for a version of the process (\ref{gl_eq}) in a random environment, where the operator $A$ is replaced by a realization of the symmetric random matrix

\begin{equation}
A_{i,j}(\omega) := N^2a_{i+1}(\omega)(\delta_{i, j-1} - \delta_{i, j}) - N^2 a_i(\omega)(\delta_{i, j} - \delta_{i, j+1})
\end{equation}
where the $a_i$ are iid random variable defined on a probability space $\Omega$, and we assume there exists a constant $c > 0$ such that we almost surely have

\begin{equation}
c \geq a_i \geq 1/c.
\end{equation}
This assumption corresponds to an ellipticity assumption on the operator $A$ that is uniform in the realization of the random field. Therefore, for any $x$ and any realization of the random field, we have
\begin{equation}
\frac{1}{c}\langle A_0 x, x \rangle \leq \langle Ax, x \rangle \leq c \langle A_0 x, x \rangle.
\end{equation}

Under these assumptions, the quantity
\begin{equation}
\bar{a}:= \mathbb{E}\left[\frac{1}{a_1}\right]
\end{equation}
is well defined and finite.

The associated SDE is 

\begin{equation} \label{sde_random}
dX^i_t = N^2\left(a_i(\psi(X_{i+1}) - \psi(X_i)) - a_{i-1} (\psi(X_i) - \psi(X_{i-1}))\right) +\sqrt{2a_i}dB^i_t - \sqrt{2a_{i-1}}dB^{i-1}_t.
\end{equation}

Given a realization of the random environment, we denote by $L_{a,N}$ the generator of this diffusion.

The following hydrodynamic limit result for the random environment model has been proven in [Fr]:

\begin{thm} Assume that the sequence of initial data $f_{0,N}\mu_N$ weakly converges to a deterministic profile $\rho_0 \in H^1(\T)$. Then, for any time $t > 0$ and any smooth function $J : \T \longrightarrow \R$, the random variable $\frac{1}{N}\sum J(i/N)X_t^i$ converges in probability to $\int_{\T}{J(\theta)\rho(t,\theta)d\theta}$, where $\rho(t,\theta)$ is the unique solution to the PDE
$$\frac{\partial \rho}{\partial t} = \bar{a}\Delta \varphi'(\rho)$$
with initial condition $\rho_0$. This convergence holds for almost every realization of the random field.
\end{thm}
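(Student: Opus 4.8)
The plan is to deduce the theorem from the $\Gamma$-convergence machinery of Section 1 rather than by a direct implementation of the entropy method. Let $Q_N^\omega$ be the law, on $\mathcal C([0,T],X)$ (with $X$ a space of macroscopic density fields on $\T$, say with the $H^{-1}$-topology), of the diffusion (\ref{sde_random}) started from the local Gibbs state $f_{0,N}\mu_N$; by the SDE representation (\ref{sde}) of gradient flows this is the gradient flow of $\Ent_{\mu_N}$ for the Riemannian tensor $A(\omega)^{-1}$. A key preliminary observation is that $\mu_N$ is reversible and invariant for (\ref{sde_random}) for \emph{every} realisation $\omega$ (the conductances change the rates, not detailed balance), so the dissipation identity yields $\tfrac1N\int_0^T\!\int\tfrac{\langle A_0\nabla f_t,\nabla f_t\rangle}{f_t}\,d\mu_N\,dt\le\tfrac CN\,\Ent_{\mu_N}(f_{0,N}\mu_N)\le C$, uniformly in $N$ and $\omega$, after using the uniform ellipticity $\langle A_0x,x\rangle\le c\,\langle A(\omega)x,x\rangle$ and $\Ent_{\mu_N}(f_{0,N}\mu_N)\le CN$ for the local Gibbs data. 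By Corollary \ref{cor_ent} it then suffices to (a) establish exponential tightness of $(Q_N^\omega)_N$ at speed $N$ and (b) bound from below the $\Gamma$-$\liminf$ of $\tfrac1N\bigl(H(\cdot,Q_{0,N})+\tfrac12 J_N^\omega(\cdot)\bigr)$ at every $\delta_{(\rho_t)}$; Theorem \ref{thm_mariani}(i) then gives a quenched large-deviation upper bound at speed $N$ with a good rate function $I$ dominating this $\liminf$.

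For (a): the logarithmic Sobolev inequality for $\mu_N$ is $\omega$-free, so combined with Lemmas \ref{lem_borne_l2}--\ref{lem_borne_lp} it controls uniformly in $\omega$ the occupation of large balls, and a standard Garsia--Rodemich--Rumsey estimate for $t\mapsto\langle\pi^N_t,J\rangle$ (whose drift is controlled by the Dirichlet-form bound) upgrades this to exponential tightness in $\mathcal C([0,T],X)$, exactly as in the treatments of [DV] and [Q], now uniformly in the environment.

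Step (b) is the heart of the matter. Given $\nu_N\to\delta_{(\rho_t)}$ with the functional bounded, the a priori bound above and Lemma \ref{limites_dans_h1} force $(\rho_t)\in L^\infty([0,T],H^1(\T))$ and $\varphi'(\rho)\in L^2([0,T],H^1(\T))$. I would bound $\tfrac1N\int_0^T g_N^\omega(\nu_t)^2\,dt$ from below using the definition (\ref{def_upper_gradient}) of the upper gradient, Proposition \ref{technical_lgs}(iii) for the entropic part, and Propositions \ref{prop_averaging} and \ref{conv_renforcees} to replace the microscopic spin functions $\psi'(x_i)$ by the macroscopic $\varphi'(\rho)$; and I would bound $\tfrac1N\int_0^T|\dot\nu|^2\,dt$ from below by Lemma \ref{lem_h-1} applied with smooth test potentials $V$. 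The new ingredient relative to the classical model is that both bounds produce random quadratic forms built from $A(\omega)$ evaluated on discretisations of smooth fields, and one-dimensional stochastic homogenisation for the i.i.d.\ environment --- equivalently the ergodic theorem applied to the explicit one-dimensional corrector --- identifies their limit through the effective coefficient $\bar a=\E[1/a_1]$; this is precisely where the constant $\bar a$ and the restriction to a.e.\ realisation appear. Collecting everything gives $\liminf_N\tfrac1N\bigl(H(\nu_N,Q_{0,N})+\tfrac12 J_N^\omega(\nu_N)\bigr)\ge I_0((\rho_t))+\tfrac12 J_\infty((\rho_t))$, where $J_\infty$ is the gradient-flow action (\ref{eq_grad_flow}) of the homogenised free energy $(\rho_t)\mapsto\int\varphi(\rho_t)$ in the limiting Wasserstein-type metric, and $I_0$, by Proposition \ref{technical_lgs}(iii) with $\rho=\rho_0$ (so that the reference $G_N\mu_N$ is $f_{0,N}\mu_N=Q_{0,N}$), is a nonnegative initial functional vanishing only at $\rho_0$.

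Finally, by the limiting analogue of Proposition \ref{def_j}, $J_\infty((\rho_t))=0$ iff $(\rho_t)$ is the gradient flow of $\int\varphi(\rho)$, i.e.\ the weak solution of $\partial_t\rho=\bar a\Delta\varphi'(\rho)$; since $\varphi''>0$ this uniformly parabolic quasilinear equation has a unique weak solution with datum $\rho_0$, so $I$ is a good rate function whose unique zero is the PDE solution $\rho^\ast$. Hence for every neighbourhood $U$ of $\rho^\ast$ one has $\limsup_N\tfrac1N\log Q_N^\omega(U^c)\le-\inf_{U^c}I<0$, i.e.\ convergence in probability of the density field to $\rho^\ast$ for a.e.\ $\omega$; testing against $\pi\mapsto\langle\pi_t,J\rangle$ yields the stated convergence of $\tfrac1N\sum J(i/N)X_t^i$. \emph{The main obstacle} is step (b): carrying out the GPV-type replacement of microscopic functions by macroscopic ones and the stochastic homogenisation of the random forms built from $A(\omega)$ simultaneously, with enough uniformity (supplied by the $\omega$-independent log-Sobolev inequality and the uniform ellipticity) to pin down the effective coefficient $\bar a=\E[1/a_1]$; this is also the only place where ``almost every realisation'' is genuinely used.
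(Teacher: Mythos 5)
The paper does not prove this statement: it is quoted as a known result and attributed to [Fr] (Fritz, \emph{Hydrodynamics in a symmetric random medium}, 1989), whose proof is a direct implementation of the GPV entropy method in a random medium. Your proposal is therefore, by construction, a genuinely different route. What you do is derive the hydrodynamic limit as a corollary of the paper's own new result for this model, the quenched large-deviation \emph{upper bound} implicit in Theorem \ref{thm_ldp_rand}: once one has exponential tightness at speed $N$ and the $\Gamma$-$\liminf$ bound $\liminf \frac{1}{N}(H(\nu_N,Q_{0,N})+\tfrac12 J_N^\omega(\nu_N))\ge I((\rho_t))$ for a good rate function $I$ whose unique zero (by uniqueness for the uniformly parabolic quasilinear equation $\partial_t\rho=\bar a\Delta\varphi'(\rho)$) is the PDE solution $\rho^\ast$, the upper bound $\limsup\frac1N\log Q_N^\omega(U^c)\le-\inf_{U^c}I<0$ gives convergence in probability to $\rho^\ast$, quenched for a.e.\ $\omega$ because the homogenisation of the random quadratic forms built from $A(\omega)$ holds $\omega$-a.s. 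The ingredients you invoke — the $\omega$-free reversibility of $\mu_N$ and the resulting Dirichlet-form a priori bound, the $\omega$-free LSI, the second-moment controls of Lemmas \ref{lem_borne_l2}--\ref{lem_borne_lp}, the averaging/replacement results of Propositions \ref{prop_averaging} and \ref{conv_renforcees}, Lemma \ref{lem_h-1} for the metric derivative, and the ergodic identification of $\bar a=\E[1/a_1]$ through the corrector, together with exponential tightness as in [DV] — are exactly the ones the paper uses to prove the LDP itself, so the outline is internally consistent with the paper's framework. Compared to Fritz's original argument this is more indirect (it carries the full weight of exponential tightness and the $\Gamma$-$\liminf$ where a direct entropy-method proof needs less), but it makes the hydrodynamic limit a clean byproduct of the $\Gamma$-convergence analysis rather than a separate input. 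One point worth checking, which you flag implicitly: the paper's lower-bound lemmas for $\frac1N\int|\dot\nu|^2$ and for the upper gradient must not themselves presuppose the hydrodynamic limit; they do not (they rely only on the a priori Dirichlet-form bound, the two-block estimates, and Lemma \ref{lem_random}), so the argument closes without circularity. Your step (b) remains a sketch, but as an outline it is sound and faithful to the machinery of Sections 1 and 3.
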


We are interested in the following quenched large deviation principle for this model, using the gradient flow approach we developed in section 1. 

\begin{thm} \label{thm_ldp_rand} Assume that the sequence of initial data is of the form (\ref{init_GL}) for some smooth initial profile $\rho_0$. Then, for almost every realization of the random field, the sequence of random functions satisfies a LDP in $L^{\infty}(H^{-1})$ with speed $N$ and rate function
\begin{align}
I(\rho) &:= \int{\varphi(\rho(0,\theta)) - \varphi(m_0(\theta)) - \varphi'(m_0(\theta))(\rho(\theta) - m_0(\theta))d\theta} \notag \\
& \hspace{1cm} + \frac{1}{4\bar{a}}\int_0^T{\left|\left| \frac{\partial \rho}{\partial t} - \bar{a}\frac{\partial^2}{\partial \theta^2}\varphi'(\rho)\right|\right|_{H^{-1}}^2dt}. \notag
\end{align}
\end{thm}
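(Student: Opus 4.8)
The plan is to apply Corollary \ref{cor_ent}. Thus, for a fixed realization $\omega$ of the conductances outside a single null set, it suffices to establish two things: (a) the laws $Q_N$ of the solutions of (\ref{sde_random}) started from the local Gibbs state (\ref{init_GL}) are exponentially tight with speed $N$ in the path space $\mathcal{C}([0,T],H^{-1}(\T))$ into which the empirical profiles $t\mapsto\frac1N\sum_i X^i_t\,\delta_{i/N}$ are mapped; and (b) for every continuous trajectory $t\mapsto\rho(t,\cdot)$, the functionals $\frac1N\bigl(H(\,\cdot\,,Q_{0,N})+\frac12 J_N(\,\cdot\,)\bigr)$ $\Gamma$-converge at $\delta_\rho$ to $I(\rho)$, where $J_N$ is the functional (\ref{def_j}) attached to the gradient-flow formulation of (\ref{sde_random}). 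The exceptional null set is the union of those coming from the quenched hydrodynamic limit of [Fr] and from the ergodic theorem applied to $(a_i)$; everything below is for $\omega$ in its complement. Exponential tightness is classical and does not really use the gradient-flow machinery: the uniform logarithmic Sobolev inequality for $\mu_N$ ([MO]) — uniform in $\omega$ since the ellipticity bounds on $A$ are — together with the standard a priori entropy-production bound $\int_0^T\frac1N\int\frac{\langle A_0\nabla f_t,\nabla f_t\rangle}{f_t}d\mu_N\,dt\le C$ gives sub-Gaussian control of $\frac1N\sum_i|X^i_t|^2$ (Lemma \ref{lem_borne_l2}) and, via a modulus-of-continuity estimate in $H^{-1}$, the required exponential confinement to compacts; see [DV] or [KL].

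Next I would identify the two pieces of $I$. The initial term $I_0(\rho):=\int_\T\varphi(\rho(0,\theta))-\varphi(m_0(\theta))-\varphi'(m_0(\theta))(\rho(0,\theta)-m_0(\theta))\,d\theta$ is the $\Gamma$-limit of $\frac1N H(\,\cdot\,,Q_{0,N})$: since $Q_{0,N}$ is the local Gibbs state with profile $m_0=\rho_0$, this is its static large-deviation rate function, obtained by tilting by $m_0$ the local Cramér rate function $\int_\T\varphi(\rho)$ of $\mu_N$ (Theorem \ref{lct}); the $\Gamma$-$\liminf$ and a recovery sequence (the local Gibbs state with profile $\rho(0,\cdot)$) are furnished by Proposition \ref{technical_lgs}. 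It then remains to prove that $\frac1N J_N$ $\Gamma$-converges at $\delta_\rho$ to $2\bigl(I(\rho)-I_0(\rho)\bigr)=\frac1{2\bar{a}}\int_0^T\|\partial_t\rho-\bar{a}\,\partial_\theta^2\varphi'(\rho)\|_{H^{-1}(\T)}^2\,dt$.

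For the $\liminf$ inequality, take $\nu^N\to\rho$ with $\liminf_N\frac1N\bigl(H(\nu^N_0,Q_{0,N})+\frac12 J_N(\nu^N)\bigr)<\infty$ (else there is nothing to prove). Finiteness forces the entropy-production bound $\frac1N\int_0^T\int\frac{\langle A_0\nabla f^N_t,\nabla f^N_t\rangle}{f^N_t}d\mu_N\,dt\le C$ (using $A\asymp A_0$ and that the entropy difference in $J_N$ is controlled via Lemma \ref{limites_dans_h1}), which puts $\rho$ in the right class and, through Lemmas \ref{lem_borne_lp}, \ref{limites_dans_h1} and Propositions \ref{prop_averaging}, \ref{conv_renforcees}, yields convergence of the space–time averages of $X^i_\cdot$ and of $\psi'(X^i_\cdot)$ to $\rho$ and $\varphi'(\rho)$. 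I would then use the dual representation $J_N((\nu_t))=\frac12\int_0^T\|\dot\nu_t-\nabla\!\cdot(A\nabla f_t\mu_N)\|_{\nu_t,*}^2\,dt$ and the elementary bound $\|\rho\|_{\nu,*}^2\ge 2\langle\rho,V\rangle-\int\langle A\nabla V,\nabla V\rangle\,d\nu$ (as in Lemma \ref{lem_h-1}) with test functions $V_t(x)=\sum_i\bigl(J(t,i/N)+\frac1N\chi_i\,\partial_\theta J(t,i/N)\bigr)x_i$, where $\chi$ is the (explicit, in dimension one) homogenization corrector for $(a_i)$. Inserting the continuity equation, integrating by parts in space and time, and letting $N\to\infty$ — the corrector being exactly what replaces the sums $\frac1N\sum_i a_i(\cdots)$ and $\frac1N\sum_i a_i^{-1}(\cdots)$ by the harmonic mean $\bar{a}=\mathbb{E}[1/a_1]$, and Proposition \ref{conv_renforcees} being what turns $\frac1N\sum_i J(i/N)\psi'(X^i)$ into $\int_\T\varphi'(\rho)$ — gives, for every smooth $J:[0,T]\times\T\to\R$,
$$\liminf_{N}\frac1N J_N(\nu^N)\ \ge\ \int_0^T\!\Bigl(\langle\partial_t\rho-\bar{a}\,\partial_\theta^2\varphi'(\rho),\ J(t,\cdot)\rangle-\tfrac{\bar{a}}{2}\,\|\partial_\theta J(t,\cdot)\|_{L^2(\T)}^2\Bigr)\,dt.$$
Taking the supremum over $J$ produces $\frac1{2\bar{a}}\int_0^T\|\partial_t\rho-\bar{a}\,\partial_\theta^2\varphi'(\rho)\|_{H^{-1}}^2\,dt=2(I(\rho)-I_0(\rho))$, and with $\liminf_N\frac1N H(\nu^N_0,Q_{0,N})\ge I_0(\rho)$ this is the $\Gamma$-$\liminf$ at $\delta_\rho$.

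For the recovery sequence, by Remark \ref{rq1} it is enough to treat $\rho$ with $\varphi'(\rho)$ and $\partial_t\rho-\bar{a}\partial_\theta^2\varphi'(\rho)$ smooth (a class dense with convergence of $I$); for such $\rho$, let $J^\ast$ realize the supremum above, take the perturbed diffusion obtained from (\ref{sde_random}) by adding the drift $A\nabla V^\ast$ with $V^\ast$ the corrected lift of $J^\ast$ and starting from the local Gibbs state with profile $\rho(0,\cdot)$, and compute that its normalized relative entropy tends to $I_0(\rho)+(I(\rho)-I_0(\rho))=I(\rho)$ using the identity $H(\tilde P,Q)=H(P_0,Q_0)+\frac12 J_N$ of Theorem \ref{min_ent} and the estimates of Section 3.2; this is the construction of the lower bound of [Q] carried out with the homogenized drift, so I would only sketch it. The main obstacle is precisely this quenched homogenization: one must show that the rescaled Dirichlet forms $\xi\mapsto\frac1N\langle A(\omega)\xi,\xi\rangle$ and their duals converge, for $\omega$ off a null set, to $\bar{a}$ times the continuum form on $H^1(\T)$ in a topology strong enough to survive inside the $\Gamma$-limit of $J_N$ — the one-dimensional random-conductance homogenization underlying [Fr], with the harmonic mean $\bar{a}=\mathbb{E}[1/a_1]$ appearing as the series-resistance effect — and one must check that the inputs \ref{prop_averaging}–\ref{conv_renforcees} all hold along a single null set; the remainder is bookkeeping on top of Sections 1 and 2.
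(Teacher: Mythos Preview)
Your plan is correct and follows the same overall architecture as the paper: invoke Corollary \ref{cor_ent}, handle exponential tightness via the [DV] estimates (which are insensitive to the bounded conductances), identify the initial-time contribution through the local Gibbs state and Proposition \ref{technical_lgs}, establish a $\Gamma$-$\liminf$ for $\frac1N J_N$ using corrected test functions built from the one-dimensional homogenization corrector $\chi_i=\sum_{j\le i}(\bar a/a_j-1)$, and build the recovery sequence as a tilted diffusion with the homogenized drift, exactly as in [Q].

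The one genuine difference is in how you organize the $\liminf$ for $J_N$. You work with the single dual-norm representation $J_N=\frac12\int_0^T\|\dot\nu_t-\nabla\!\cdot(A\nabla f_t\mu_N)\|_{\nu_t,*}^2\,dt$ (the formula of Remark 1.2) and bound it by inserting one corrected test function $V$, after which the sup over $J$ gives $\frac{1}{2\bar a}\int_0^T\|\partial_t\rho-\bar a\,\partial_\theta^2\varphi'(\rho)\|_{H^{-1}}^2$. The paper instead keeps $J_N$ in its four-term form $\Ent_{\mu_N}(\nu_T)-\Ent_{\mu_N}(\nu_0)+\frac12\int|\dot\nu|^2+\frac12\int g^2$ and proves \emph{three separate} sub-lemmas: (i) $\frac1N\Ent_{\mu_N}$ $\Gamma$-converges to $\int\varphi(\rho)-\varphi(\int\rho)$; (ii) $\liminf\frac1N\int|\dot\nu_t|^2\ge \bar a^{-1}\int\|\partial_t\rho\|_{H^{-1}}^2$ via Lemma \ref{lem_h-1} with the corrected linear test function; and (iii) $\liminf\frac1N\int g(\nu_t)^2\ge \bar a\int(\partial_\theta\varphi'(\rho))^2$ via a Fisher-information inequality with the tilted local Gibbs density $G_N(x)=\exp\bigl(\sum_i(\sum_{j\le i}\bar a a_j^{-1}J(i/N))x_i\bigr)$. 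The three pieces are then recombined (the entropy increment cancels the cross term) to give the same $H^{-1}$ quadratic form.

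What each buys: your route is shorter and makes the homogenization corrector do all the work at once, but it relies on the identification of $J_N$ with the dual-norm expression, which the paper cautions holds ``at least for smooth functions''; you should make explicit that the variational lower bound $\|\cdot\|_*^2\ge 2\langle\cdot,V\rangle-\|V\|_\nu^2$ is valid distributionally for any curve with finite $J_N$ (the finiteness of the Fisher information providing the needed regularity of $\nabla f_t$). The paper's term-by-term decomposition avoids this point entirely, since each of the four terms is well defined by construction, and has the side benefit of isolating the static $\Gamma$-convergence of the entropy (Lemma \ref{gamma_con_ent}) as a reusable statement. The recovery sequence and the quenched ergodic input (your null set coming from Lemma \ref{lem_random}) are the same in both approaches.
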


This generalizes the large deviations principle of [DV] to the case of random environment.

In terms of gradient flows, this result follows from two facts : 

\begin{itemize}
\item The relative entropy with respect to the invariant measure, divided by $N$, $\Gamma$-converges to $\rho \longrightarrow \int{\varphi(\rho)} - \varphi\left(\int{\rho} \right)$. This corresponds to a large deviations principle for the sequence of invariant measures $\mu_N$;

\item The sequence of metrics given by $A^{-1}(w)$ almost surely converge to the $H^{-1}$ norm, divided by a factor $\bar{a}$.
\end{itemize}

As a technical tool, we shall need the following convergence result, which will be used to formalize the convergence of the discrete norms.

\begin{lem} \label{lem_random}
Let $(a_q)_{q \in \Q}$ be a sequence of positive, bounded, iid random variables, and let $a_i^N := a_{i/N}$. With probability one, for any sequence $(h^N)$ of step functions  on $\T$ that converges to a function $h$ in $L^1$, such that $h^N$ is constant on $\left(\frac{i-1}{N}, \frac{i}{N}\right]$, and denoting by $h^N_i$ the value of $h^N$ on such an interval, we have
$$\lim \frac{1}{N}\underset{i = 1}{\stackrel{N}{\sum}} \hspace{1mm} a_i^Nh_i^N = \mathbb{E}(a)\int{h(\theta)d\theta}.$$
\end{lem}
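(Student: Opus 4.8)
The statement is a reinforced strong law of large numbers: the almost-sure convergence of the weighted average $\frac{1}{N}\sum_i a_i^N h_i^N$ must hold \emph{simultaneously} for every admissible sequence $(h^N)$, so the null set on which it fails cannot be allowed to depend on the test sequence. The plan is therefore to first establish the convergence on a fixed countable dense family of limits, use a single almost-sure event valid for all of them, and then bootstrap to arbitrary $L^1$-convergent sequences by a density/truncation argument exploiting the uniform bound $a_q \le c$.

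\emph{Step 1 (a countable generating family).} Fix a countable set $\mathcal{D}$ of functions on $\T$ that is dense in $L^1(\T)$ and consists of, say, indicator functions of dyadic subintervals and their finite rational linear combinations; for each $h \in \mathcal{D}$ choose a canonical approximating sequence of step functions $h^{N,h}$ constant on the $\left(\frac{i-1}{N},\frac{i}{N}\right]$ and converging to $h$ in $L^1$ (and with $\|h^{N,h}\|_\infty$ bounded by $\|h\|_\infty$). For each such $h$, the variables $a_i^N h_i^{N,h}$ are independent, uniformly bounded by $c\|h\|_\infty$, and have means $\mathbb{E}(a)\,h_i^{N,h}$; a standard Hoeffding/Bernstein bound gives $\mathbb{P}\big(|\frac{1}{N}\sum_i (a_i^N h_i^{N,h} - \mathbb{E}(a) h_i^{N,h})| > \varepsilon\big) \le 2\exp(-c'N\varepsilon^2)$, so Borel--Cantelli yields $\frac{1}{N}\sum_i a_i^N h_i^{N,h} \to \mathbb{E}(a)\int h\,d\theta$ almost surely. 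Intersecting over the countably many $h \in \mathcal{D}$ and over rational $\varepsilon$ produces a single event $\Omega_0$ of full probability on which the convergence holds for every member of the generating family.

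\emph{Step 2 (bootstrapping to arbitrary sequences).} Work on $\Omega_0$. Let $(h^N)$ be any sequence of step functions of the prescribed form with $h^N \to h$ in $L^1$. Given $\varepsilon > 0$, pick $g \in \mathcal{D}$ with $\|h - g\|_{L^1} < \varepsilon$. Writing the difference $\frac{1}{N}\sum_i a_i^N h_i^N - \frac{1}{N}\sum_i a_i^N g_i^{N,g}$ and using $|a_i^N| \le c$, it is bounded by $\frac{c}{N}\sum_i |h_i^N - g_i^{N,g}| = c\,\|h^N - g^{N,g}\|_{L^1}$, which converges to $c\,\|h - g\|_{L^1} < c\varepsilon$ as $N \to \infty$. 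Since $\frac{1}{N}\sum_i a_i^N g_i^{N,g} \to \mathbb{E}(a)\int g\,d\theta$ on $\Omega_0$, and $|\int g\,d\theta - \int h\,d\theta| \le \varepsilon$, we get $\limsup_N |\frac{1}{N}\sum_i a_i^N h_i^N - \mathbb{E}(a)\int h\,d\theta| \le (c+1)\varepsilon + |\mathbb{E}(a)|\varepsilon$. Letting $\varepsilon \to 0$ gives the claim, on the single full-measure event $\Omega_0$ that did not depend on $(h^N)$.

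\emph{Main obstacle.} The only real subtlety is the uniformity in the test sequence: if one naively applies the SLLN for each sequence separately one gets a null set depending on $(h^N)$, and there are uncountably many admissible sequences. The truncation-by-a-dense-countable-family device in Steps 1--2 is exactly what circumvents this, and it works precisely because the weights $a_i^N$ are uniformly bounded, so that $L^1$-closeness of the step functions transfers to closeness of the weighted averages with a dimension-free constant. A minor point to check is that the $a_i^N = a_{i/N}$ for distinct $N$ do reuse the same underlying randomness $(a_q)_{q\in\Q}$, but this only helps: the concentration estimate in Step 1 is for fixed $N$, and no independence across $N$ is needed.
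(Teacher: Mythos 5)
The proposal is correct and follows essentially the same two-step strategy as the paper: establish almost-sure convergence simultaneously on a countable collection of test functions (the paper takes, for each integer $M$, all step functions on the $M$-mesh, handled by finitely many applications of the law of large numbers per $M$; you take a countable dense family in $L^1$ with canonical approximating step sequences), then transfer to arbitrary $L^1$-convergent sequences $(h^N)$ via the uniform bound $a_i^N \le c$. Your explicit Hoeffding/Borel--Cantelli estimate is in fact a slightly more careful way to get the Step 1 convergence than the paper's bare invocation of the SLLN, since the summands $a_{i/N}$ form a triangular array whose indexing rationals change with $N$; otherwise the two arguments are the same.
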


\begin{proof} Let $M$ be an integer. The strong law of large numbers implies that, with probability 1, for any step function $h$ that is constant on the intervals $\left(\frac{i-1}{M}, \frac{i}{M}\right]$, we have 
$$\lim \frac{1}{N}\underset{i = 1}{\stackrel{N}{\sum}} \hspace{1mm} a_i^Nh(i/N) = \mathbb{E}(a)\int{h(\theta)d\theta}.$$
This then remains true simultaneously for every integer $M$, still with probability 1. An approximation argument in $L^1(\T)$ then yields the desired result.
\end{proof}

\begin{proof} [Proof of Theorem \ref{thm_ldp_rand}]

Given a realization of our random environment, the functional $J_N$ is given by
\begin{equation}
J_N(\nu_t) = \Ent_{\mu}(\nu_T) - \Ent_{\mu}(\nu_0) + \frac{1}{2}\int_0^T{||\partial_t \nu_t||_{H^{-1}(A^{-1})}dt} + \frac{1}{2}\int_0^T{\int{\frac{\langle A\nabla g_t, \nabla g_t \rangle}{g_t} d\mu}dt}
\end{equation}
where $g_t$ is the density of $\nu_t$ with respect to $\mu$. 

\begin{lem} \label{gamma_con_ent}
The functional $\frac{1}{N}\Ent_{\mu_N}$ $\Gamma$-converges at every Dirac mass to $\rho \rightarrow \int{\varphi(\rho)d\theta} - \varphi \left(\int{\rho d\theta} \right)$.
\end{lem}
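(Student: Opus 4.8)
The plan is to verify, at an arbitrary Dirac mass $\delta_\rho$ (with $I(\rho):=\int_{\T}{\varphi(\rho(\theta))d\theta}-\varphi\big(\int_{\T}{\rho(\theta)d\theta}\big)$), the two conditions in the definition of $\Gamma$-convergence, reducing everything to Proposition \ref{technical_lgs}, the local Cram\'er theorem (Theorem \ref{lct}), and the moment bounds coming from the logarithmic Sobolev inequality. Throughout I identify a configuration $x\in\R^N$ with the step function $\sum_i x_i\mathbbm{1}_{[(i-1)/N,i/N)}$ and view $\mu_N$ as a measure on the ambient space; since $\mu_N$ lives on $X_{N,m}$, only profiles of mean $m$ are relevant. (One could shortcut the whole lemma by invoking the classical Donsker--Varadhan large deviation principle for $\mu_N$ together with Theorem \ref{thm_mariani}; I prefer to sketch a self-contained argument built from the tools of this section.)

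\textbf{Recovery sequence.} For $\rho$ smooth, take $\nu_N:=G_N\mu_N$ with $G_N$ the local Gibbs density of Proposition \ref{technical_lgs} associated to $\rho$. Part (i) of that proposition gives $\nu_N\to\delta_\rho$, and part (iii) applied to the sequence $\nu_N$ itself (for which $\tfrac1N\Ent_{G_N\mu_N}(\nu_N)=0$) gives $\tfrac1N\Ent_{\mu_N}(\nu_N)\to I(\rho)$. For a general $\rho$ with $I(\rho)<\infty$, i.e.\ $\varphi(\rho)\in L^1(\T)$, I would mollify, $\rho_k:=\rho\ast\eta_k$: then $\rho_k\to\rho$ in the ambient space, $\int\rho_k=\int\rho$, and by convexity of $\varphi$ (Jensen gives $\varphi(\rho_k)\le\varphi(\rho)\ast\eta_k$) together with Fatou one gets $\int\varphi(\rho_k)\to\int\varphi(\rho)$, hence $I(\rho_k)\to I(\rho)$, and Remark \ref{rq1} reduces the recovery condition at $\delta_\rho$ to the smooth case. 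If $I(\rho)=+\infty$, take $\nu_N:=\delta_{x^N}$ for $x^N\in X_{N,m}$ a discretization of $\rho$ converging to it: then $\Ent_{\mu_N}(\nu_N)=+\infty$ for every $N$, so $\tfrac1N\Ent_{\mu_N}(\nu_N)\to+\infty=I(\rho)$ and $\nu_N\to\delta_\rho$.

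\textbf{Lower bound.} Let $\nu_N\to\delta_\rho$ weakly; we may assume $\liminf_N\tfrac1N\Ent_{\mu_N}(\nu_N)<\infty$ and, along a subsequence, that it converges to a finite $L$. Since $\mu_N$ satisfies a logarithmic Sobolev inequality uniformly in $N$, hence the transport--entropy inequality $W_p^p(\nu_N,\mu_N)\le C\,\Ent_{\mu_N}(\nu_N)$ used in the proof of Lemma \ref{lem_borne_lp}, the bound $\Ent_{\mu_N}(\nu_N)\le CN$ yields $\tfrac1N\int{\sum|x_i|^2\,d\nu_N}\le C$. Fix an arbitrary smooth profile $\sigma$ with $\int\sigma=m$ and let $G_N^{\sigma}$ be the associated density; dropping the nonnegative term in $\tfrac1N\Ent_{\mu_N}(\nu_N)=\tfrac1N\Ent_{G_N^{\sigma}\mu_N}(\nu_N)+\tfrac1N\int{\log G_N^{\sigma}\,d\nu_N}$ gives
\[
\tfrac1N\Ent_{\mu_N}(\nu_N)\ \ge\ \tfrac1N\sum_i\varphi'(\sigma(i/N))\int{x_i\,d\nu_N}\ -\ \tfrac1N\log\mathcal{Z}_N^{\sigma},
\]
where $\mathcal{Z}_N^{\sigma}=\int e^{\sum_i\varphi'(\sigma(i/N))x_i}\,d\mu_N$. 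By the second-moment bound and weak convergence (the $W_1$-convergence argument of Proposition \ref{conv_renforcees}) the first term tends to $\int_{\T}{\varphi'(\sigma)\rho\,d\theta}$; by Theorem \ref{lct} the quantity $\tfrac1N\log\mathcal{Z}_N^{\sigma}$ converges, and running the recovery-sequence computation with $\nu_N=G_N^{\sigma}\mu_N$ (where the left side tends to $I(\sigma)$ while $\tfrac1N\sum_i\varphi'(\sigma(i/N))\int x_i\,d(G_N^{\sigma}\mu_N)\to\int\varphi'(\sigma)\sigma\,d\theta$, using Proposition \ref{technical_lgs}(ii)) identifies $\lim_N\tfrac1N\log\mathcal{Z}_N^{\sigma}=\int{\varphi'(\sigma)\sigma\,d\theta}-I(\sigma)$. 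Hence
\[
L\ \ge\ \int_{\T}{\varphi'(\sigma)(\rho-\sigma)\,d\theta}\ +\ \int_{\T}{\varphi(\sigma)\,d\theta}\ -\ \varphi\big(\textstyle\int_{\T}{\sigma\,d\theta}\big),
\]
and letting $\sigma=\rho_k$ run through the mollifications of $\rho$ (when $I(\rho)=\infty$, using instead the Fenchel representation $\varphi(t)=\sup_s\big(\varphi'(s)t-(\varphi'(s)s-\varphi(s))\big)$ and monotone convergence in the resulting supremum over smooth $\sigma$) the right side converges to $I(\rho)$, which is the desired inequality.

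\textbf{Expected main obstacle.} The recovery sequence is essentially immediate from Proposition \ref{technical_lgs}; the work is in the $\liminf$, and there the two delicate points are: (a) upgrading a bound on the \emph{normalized relative entropy} to the moment/tightness estimates required to pass to the limit in the linear functional $\tfrac1N\sum_i\varphi'(\sigma(i/N))\int x_i\,d\nu_N$ — this is exactly what forces the use of the transport--entropy inequality rather than the bare logarithmic Sobolev inequality, and explains why $\nu_N$ (a priori an arbitrary probability measure, not absolutely continuous) causes no trouble; and (b) exchanging the limits $N\to\infty$ and $\sigma\to\rho$, i.e.\ recognizing $I(\rho)$ as the supremum over smooth $\sigma$ of the affine lower bounds obtained above and checking $I(\rho_k)\to I(\rho)$, where convexity and lower semicontinuity of $\varphi$ are essential. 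The asymptotics of the hyperplane partition functions $\mathcal{Z}_N^{\sigma}$ present no new difficulty, being already packaged into Theorem \ref{lct}.
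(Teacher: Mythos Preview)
Your argument is correct and rests on the same device as the paper: the decomposition
\[
\Ent_{\mu_N}(\nu_N)=\Ent_{G_N\mu_N}(\nu_N)+\int\log G_N\,d\nu_N
\]
for a local Gibbs density $G_N$, nonnegativity of the first term for the $\liminf$ bound, and the choice $\nu_N=G_N\mu_N$ for the recovery sequence. The difference is one of economy. The paper's proof is about ten lines: it takes $G_N$ associated to the limiting profile $\rho$ directly (through $\rho_{i,N}=\int_{(i-1)/N}^{i/N}\rho$), asserts the convergence $\tfrac1N\int\log G_N\,d\nu_N\to I(\rho)$ by reference to the computation already carried out in the proof of Proposition~\ref{technical_lgs}, and is done. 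There is no auxiliary smooth $\sigma$, no optimization, no separate treatment of the case $I(\rho)=+\infty$, and the moment bound you extract from the transport--entropy inequality is not made explicit.

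What your longer route buys is rigor at profiles $\rho$ that are not smooth: the paper's claim that $\tfrac1N\int\log G_N\,d\nu_N$ converges for \emph{every} weakly convergent sequence $\nu_N\to\delta_\rho$ tacitly uses a moment control (to handle the unbounded linear test function) and some regularity of $\varphi'(\rho)$, neither of which is spelled out. Your variational detour through smooth $\sigma$ and the Fenchel representation of $\varphi$ makes this honest. So: same idea, your version is more careful, the paper's is shorter.
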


\begin{proof}
Let $\nu_N$ be a sequence that weakly converges to a deterministic profile $\rho : \T \rightarrow \R$, and let 
\begin{equation}
G_N(x) := \frac{1}{Z_N}\exp\left(\underset{i = 1}{\stackrel{N}{\sum}} \hspace{1mm} \varphi'(\rho_{i,N})x_i \right)
\end{equation}
be a local Gibbs profile with respect to $\mu$, where $\rho_{i,N} = \int_{(i-1)/N}^{i/N}{\rho(\theta)d\theta}$ and $Z_N$ is the normalization constant such that $G\mu$ is a probability measure.

We then have the decomposition

\begin{equation} \label{dec_ent}
\Ent_{G_N\mu}(\nu_N) = \Ent_{\mu}(\nu_N) - \int{\log G_N d\nu_N}.
\end{equation}

By definition of $G_N$, since $\nu_N$ weakly converges to the deterministic profile $\rho$, as we have seen in the proof of Proposition \ref{technical_lgs}, $\frac{1}{N}\int{\log G_N d\nu_N}$ converges to $\int{\varphi(\rho)d\theta} - \varphi \left(\int{\rho d\theta} \right)$. Since the relative entropy is nonnegative, we can deduce from (\ref{dec_ent}) the inequality
$$\liminf \hspace{1mm} \frac{1}{N} \Ent_{\mu}(\nu_N) \geq \int{\varphi(\rho)d\theta} - \varphi \left(\int{\rho d\theta} \right).$$

Moreover, the measures $G_N \mu$ weakly converge to the deterministic profile $\rho$, so that they provide the recovery sequence for this Gamma-convergence result.
\end{proof}

\begin{lem} \label{conv_ent_init}
Let $(\nu_N)$ be a sequence of probability measures that converges to a profile $\rho$, such that, for any $N$, $\nu_N$ is absolutely continuous mith respect to $\mu_N$.

Then
$$\frac{1}{N}\Ent_{f_{0,N}\mu_N}(\nu_N) - \frac{1}{N}\Ent_{\mu_N}(\nu_N) \longrightarrow \int_{\T}{\varphi'(m_0)(m_0 - \rho) -\varphi(m_0)d\theta}.$$
\end{lem}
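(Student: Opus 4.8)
The starting point is the elementary change-of-reference identity for relative entropy. Writing $g_N := d\nu_N/d\mu_N$, the density of $\nu_N$ with respect to $f_{0,N}\mu_N$ is $g_N/f_{0,N}$ (well defined since $f_{0,N}>0$), and expanding $\int (g_N/f_{0,N})\log(g_N/f_{0,N})\,f_{0,N}\,d\mu_N$ gives
\[
\Ent_{f_{0,N}\mu_N}(\nu_N) = \Ent_{\mu_N}(\nu_N) - \int \log f_{0,N}\, d\nu_N ,
\]
so the difference to be analyzed is $-\frac1N\int\log f_{0,N}\,d\nu_N$. Recalling (\ref{init_GL}), we have $\log f_{0,N}(x) = \sum_i \varphi'(m_0(i/N))x_i - \log Z_{0,N}$ with $Z_{0,N}$ the normalizing constant, where $m_0=\rho_0$ is the initial profile. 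Rather than evaluate $\log Z_{0,N}$ directly, I will use the same identity applied with $\nu_N$ replaced by $f_{0,N}\mu_N$ itself, namely $\Ent_{\mu_N}(f_{0,N}\mu_N) = \int\log f_{0,N}\,d(f_{0,N}\mu_N)$, to eliminate that constant: subtracting,
\[
-\frac1N\int\log f_{0,N}\,d\nu_N = -\frac1N\Ent_{\mu_N}(f_{0,N}\mu_N) - \frac1N\sum_{i=1}^N \varphi'(m_0(i/N))\left(\int x_i\, d\nu_N - \int x_i\, d(f_{0,N}\mu_N)\right).
\]

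For the first term, observe that $f_{0,N}$ is precisely the local Gibbs density $G_N$ of Proposition \ref{technical_lgs} attached to the profile $m_0$. Applying part (iii) of that proposition to the sequence $f_{0,N}\mu_N$ — its relative entropy with respect to $G_N\mu_N=f_{0,N}\mu_N$ is identically zero, so the hypothesis holds trivially, and part (i) gives its convergence to the profile $m_0$ — yields
\[
\frac1N\Ent_{\mu_N}(f_{0,N}\mu_N) \longrightarrow \int_{\T}\varphi(m_0(\theta))\,d\theta - \varphi\Big(\int_{\T}m_0(\theta)\,d\theta\Big).
\]

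For the second term, both $\nu_N$ and $f_{0,N}\mu_N$ converge weakly to deterministic profiles ($\rho$ and $m_0$), and $\theta\mapsto\varphi'(m_0(\theta))$ is a fixed smooth function on $\T$; it therefore suffices to upgrade this weak convergence to convergence of the (unbounded) linear statistic $\frac1N\sum_i\varphi'(m_0(i/N))x_i$, which is done exactly as in the first half of the proof of Proposition \ref{conv_renforcees}, by passing through the $W_1$-distance for the $L^2$ metric on $\T$, once one has the uniform second-moment bound $\sup_N\frac1N\sum_i\int|x_i|^2$ for each of the two sequences. For $f_{0,N}\mu_N$ this is immediate: it is a local Gibbs state whose single-site tilts $\varphi'(m_0(i/N))$ are uniformly bounded, hence so are the single-site second moments. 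For $\nu_N$ — assuming, as we may, $\sup_N\frac1N\Ent_{\mu_N}(\nu_N)<\infty$, the statement being vacuous otherwise — it follows from the uniform transport–entropy inequality for $\mu_N$ (a consequence of the uniform log-Sobolev inequality, Proposition \ref{technical_lgs}(ii)) together with $\sup_N\frac1N\sum_i\int|x_i|^2\,d\mu_N<\infty$, exactly as in the proof of Proposition \ref{technical_lgs}(iii). This gives
\[
\frac1N\sum_{i=1}^N\varphi'(m_0(i/N))\left(\int x_i\, d\nu_N - \int x_i\, d(f_{0,N}\mu_N)\right) \longrightarrow \int_{\T}\varphi'(m_0(\theta))\big(\rho(\theta) - m_0(\theta)\big)\,d\theta .
\]

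Combining the two limits, and using the mass constraint $\int_{\T}m_0\,d\theta = m$ together with the normalization of the single-site potential under which $\varphi(m)=0$, I get
\[
\frac1N\Ent_{f_{0,N}\mu_N}(\nu_N) - \frac1N\Ent_{\mu_N}(\nu_N) \longrightarrow -\int_{\T}\varphi(m_0)\,d\theta - \int_{\T}\varphi'(m_0)(\rho - m_0)\,d\theta = \int_{\T}\big(\varphi'(m_0)(m_0-\rho) - \varphi(m_0)\big)\,d\theta,
\]
which is the claim. The main obstacle is the second term: extracting convergence of the unbounded empirical functional $\frac1N\sum_i\varphi'(m_0(i/N))x_i$ from the mere weak convergence of $\nu_N$ to a profile requires the uniform second-moment tightness above, which is exactly where the log-Sobolev and transport-entropy machinery of Section 3.2 is needed; the first term, by contrast, is a direct invocation of Proposition \ref{technical_lgs}(iii) applied to the local Gibbs state itself.
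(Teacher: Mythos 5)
Your argument is correct and reaches the stated limit, but it follows a genuinely different route from the paper's own proof, and it is worth comparing them. Both begin from the change-of-reference identity $\Ent_{f_{0,N}\mu_N}(\nu_N) - \Ent_{\mu_N}(\nu_N) = -\int\log f_{0,N}\,d\nu_N$ and the expansion $\log f_{0,N}(x) = \sum_i\varphi'(m_0(i/N))x_i - \log Z_{0,N}$. At this point the paper writes $\frac{1}{N}\log Z_{0,N}$ as a difference of two log-partition functions over $X_{N,m}$, one with tilt $a_i=\varphi'(m_0(i/N))$ and one with tilt $a_i=0$, and applies the local Cram\'er Theorem \ref{lct} to each directly. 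You instead eliminate $\log Z_{0,N}$ by applying the same identity a second time, to $f_{0,N}\mu_N$ itself, which reduces the problem to $-\frac{1}{N}\Ent_{\mu_N}(f_{0,N}\mu_N)$ plus a difference of linear statistics, and then you invoke Proposition \ref{technical_lgs}(iii). This is a clean maneuver, though Proposition \ref{technical_lgs}(iii) is itself proved by Theorem \ref{lct}, so the two proofs rest on the same engine. The extra cost of your route is that you must also pass to the limit in $\frac{1}{N}\sum_i\varphi'(m_0(i/N))\int x_i\,d(f_{0,N}\mu_N)$, and more generally you have to supply the moment-tightness needed to turn weak convergence of profiles into convergence of the unbounded linear statistic; you do this correctly via the transport-entropy inequality, and in fact you make explicit an ingredient the paper treats tersely (it too needs such tightness to pass to the limit in $\frac{1}{N}\sum\varphi'(m_0(i/N))\int x_i\,d\nu_N$).

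One point deserves a flag: your final step invokes a normalization under which $\varphi(m)=0$, which the paper nowhere states. What is actually happening is that both your computation and the paper's carry the same extra additive constant $\varphi(m) = \varphi\big(\int m_0\,d\theta\big)$: in the paper's proof, the local Cram\'er theorem applied to the ratio of partition functions yields $\int\big(\varphi'(m_0)m_0 - \varphi(m_0)\big)d\theta + \varphi(m)$, and the $\varphi(m)$ is silently dropped; in yours, it comes in through Proposition \ref{technical_lgs}(iii) as the $-\varphi\big(\int m_0\big)$ term. Since $m$ is the conserved mean spin, $\varphi(m)$ is just a constant, and it cancels exactly against the $-\varphi\big(\int\rho\,d\theta\big)$ in Lemma \ref{gamma_con_ent} when the two contributions are summed to form the rate function, so the discrepancy is immaterial for Theorem \ref{thm_ldp_rand}. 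It would nonetheless be more accurate to observe that the constant cancels downstream than to assert a normalization convention the paper does not make.
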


\begin{proof}
Since $\nu_N$ is absolutely continuous with respect to $\mu_N$ (and therefore to $f_{0,N}\mu_N$), we have
\begin{align}
\frac{1}{N}&\Ent_{f_{0,N}\mu_N}(\nu_N) - \frac{1}{N}\Ent_{\mu_N}(\nu_N) \notag \\
&= -\frac{1}{N}\int{\log f_{0,N} d\nu_N} \notag \\
&= -\frac{1}{N}\int{\sum \varphi'(m_0(i/N))x_i \nu_N(dx)} + \frac{1}{N} \log \int{\exp\left(\sum \varphi'(m_0(i/N))x_i \right) \mu_N(dx)} \notag \\
&= -\frac{1}{N}\int{\sum \varphi'(m_0(i/N))x_i \nu_N(dx)} + \frac{1}{N} \log \int{\exp\left(\sum \varphi'(m_0(i/N))x_i - \psi(x_i) \right) dx} \notag \\
&- \hspace{5mm} \frac{1}{N} \log \int{\exp\left(\sum  - \psi(x_i) \right) dx} \notag \\
&\longrightarrow -\int{\varphi'(m_0(\theta))\rho(\theta)d\theta} + \int{\varphi'(m_0(\theta))m_0(\theta)d\theta} \notag \\
&\hspace{1cm} - \int{\varphi(m_0(\theta)d\theta} \notag
\end{align}
since $(\nu_N)$ has asymptotic profile $\rho$, and applying Theorem \ref{lct}.
\end{proof}

We will now investigate the behavior of the slope:

\begin{lem}[Lower bound for the time-derivative]
For any time $t$ and subsequence such that $$\underset{N}{\sup} \hspace{1mm} \frac{1}{N} J_N(f_N) < +\infty,$$
we have
$$\liminf \frac{1}{N}\int_0^T{|\dot{\nu}_{N,t}|^2_{\nu_{N,t}}dt} \geq \frac{1}{\bar{a}}\int_0^T{||\partial \rho/ \partial t ||_{H^{-1}}^2dt}.$$

\end{lem}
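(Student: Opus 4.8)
The plan is to use Lemma~\ref{lem_h-1} applied to a well-chosen sequence of smooth test potentials, and to pass to the limit using the convergence results of Section~2.2 (in particular Proposition~\ref{conv_renforcees}, Lemma~\ref{lem_random}, and the a priori $H^1$-bounds of Lemma~\ref{limites_dans_h1}). First I would observe that the hypothesis $\sup_N \frac{1}{N} J_N(f_N) < +\infty$ forces, via the definition of $J_N$ and the fact that the entropy difference is controlled (using the LSI and Lemma~\ref{gamma_con_ent}), a uniform bound of the form $\frac{1}{N}\int_0^T \int \frac{\langle A_0 \nabla g_t, \nabla g_t\rangle}{g_t}\, d\mu_N\, dt \leq C$ and $\frac{1}{N}\int_0^T |\dot\nu_{N,t}|^2\, dt \leq C$; this is what lets me invoke the averaging lemmas along the trajectory. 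After passing to a subsequence, $\nu_{N,t}$ converges (for a.e.\ $t$, or in a time-integrated weak sense) to a deterministic profile $\rho(t,\cdot)$, and by Lemma~\ref{limites_dans_h1} one has $\varphi'(\rho) \in L^2([0,T],H^1(\T))$, so $\frac{\partial\rho}{\partial t}$ makes sense in $H^{-1}$ and the right-hand side is finite.

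Next, fix a smooth function $J : [0,T]\times\T \to \R$ and, for each $N$, build the test potential $V_N(t,x) := \frac{1}{N}\sum_i J(t,i/N)\, x_i$ on $\R^N$ (viewed through the embedding into $X$). Applying Lemma~\ref{lem_h-1} with tensor $A^{-1}(\omega)$ and potential $V_N$ gives
\begin{align}
\int_0^T |\dot\nu_{N,t}|^2\, dt &\geq 2\int V_N(T,x)\,\nu_{N,T}(dx) - 2\int V_N(0,x)\,\nu_{N,0}(dx) \notag \\
&\quad - 2\int_0^T \int \partial_t V_N(t,x)\,\nu_{N,t}(dx)\, dt - \int_0^T \int \langle A(\omega)\nabla V_N, \nabla V_N\rangle\,\nu_{N,t}(dx)\, dt. \notag
\end{align}
Now I divide by $N$ and take $N\to\infty$ term by term. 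The first three terms involve $\frac{1}{N}\sum_i J(t,i/N) x_i$ integrated against $f_{N,t}\mu_N$ (or the initial data), which by Proposition~\ref{conv_renforcees} (and the remark following it, for the time-integrated version) converge to $\int_\T J(t,\theta)\rho(t,\theta)\,d\theta$ and its time-derivative analogues. The quadratic term is the delicate one: $\nabla V_N$ has components $\frac{1}{N}(J(t,i/N) - J(t,(i-1)/N)) \approx \frac{1}{N^2} J'(t,i/N)$ in the right discrete sense, and $\langle A(\omega)\nabla V_N, \nabla V_N\rangle = \frac{1}{N}\sum_i a_i(\omega)\, (\text{discrete gradient of }J)^2 \cdot N^2$, which after the $\frac1N$ normalization is a Riemann-type sum of the form $\frac{1}{N}\sum_i a_i(\omega)\, J'(t,i/N)^2$; by Lemma~\ref{lem_random} this converges almost surely to $\bar a^{-1}$ times\footnote{Here one must track that $\bar a = \E[1/a_1]$ is the \emph{harmonic} mean appearing after the homogenization, i.e.\ the relevant constant is $\E[a]$ at this level before inverting the metric; the bookkeeping of which mean appears where is exactly the random-conductance homogenization point, and must be done carefully.} $\int_\T J'(t,\theta)^2\,d\theta = \int_\T \langle \text{(something)}, \nabla J\rangle$. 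Assembling, the limit inequality reads
$$\liminf \frac{1}{N}\int_0^T |\dot\nu_{N,t}|^2\, dt \geq 2\langle\!\langle \partial_t\rho, J\rangle\!\rangle - \bar a \int_0^T \|\nabla J(t,\cdot)\|_{L^2(\T)}^2\, dt,$$
where $\langle\!\langle\cdot,\cdot\rangle\!\rangle$ is the $H^{-1}$ duality pairing in space, time-integrated, realized via the identity $\int \partial_t\rho\, J = [\int \rho J] - \int \rho\,\partial_t J$.

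Finally, I optimize over $J$. Taking the supremum of the right-hand side over all smooth $J$ reproduces, by the standard Legendre-duality characterization of the (weighted) $H^{-1}$ norm, the quantity $\frac{1}{\bar a}\int_0^T \|\partial_t\rho\|_{H^{-1}}^2\, dt$ — this is precisely where the factor $\bar a$ moves from multiplying $\|\nabla J\|^2$ to dividing $\|\partial_t\rho\|_{H^{-1}}^2$. The main obstacle I anticipate is twofold: (i) justifying the almost-sure convergence of the quadratic term uniformly enough in $J$ to commute the supremum with the limit — one handles this by first proving the inequality for each fixed $J$ (where Lemma~\ref{lem_random} applies on a full-measure event depending on $J$, then extended to a countable dense family of $J$'s and finally to all smooth $J$ by density, since the bound $\frac1N \int_0^T|\dot\nu|^2 \le C$ gives equicontinuity of both sides in $J$); and (ii) making rigorous the passage from discrete quantities $\frac1N\sum_i J(t,i/N)x_i$ to their continuum limits \emph{in the time-integrated sense} requires the uniform-in-time entropy-production bound extracted in the first step, so that the time-integrated version of Proposition~\ref{conv_renforcees} (stated in the remark after it) applies. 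Everything else is routine manipulation of the Benamou–Brenier/Lemma~\ref{lem_h-1} inequality and Riemann sums.
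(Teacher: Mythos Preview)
Your overall strategy---apply Lemma~\ref{lem_h-1} with a linear test potential, pass to the limit in each term, then optimize over $J$---is exactly the paper's, and the handling of the linear terms is fine. The gap is in the quadratic term, and it is not just bookkeeping: your choice $V_N(t,x)=\sum_i J(t,i/N)x_i$ (drop the spurious $1/N$, which would make every term vanish) gives
\[
\frac{1}{N}\int\langle A(\omega)\nabla V_N,\nabla V_N\rangle\,d\nu_t
=\frac{1}{N}\sum_i a_i\bigl(N(J(t,\tfrac{i+1}{N})-J(t,\tfrac{i}{N}))\bigr)^2
\;\longrightarrow\;\E[a]\int_{\T}(\partial_\theta J)^2\,d\theta,
\]
by Lemma~\ref{lem_random}. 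After optimizing in $J$ this yields the lower bound $\tfrac{1}{\E[a]}\int_0^T\|\partial_t\rho\|_{H^{-1}}^2dt$, which is \emph{strictly smaller} than the claimed $\tfrac{1}{\bar a}\int_0^T\|\partial_t\rho\|_{H^{-1}}^2dt$ whenever the environment is genuinely random (arithmetic mean versus harmonic mean). Your footnote notices that some homogenization is at stake but does not supply the missing ingredient.

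What the paper does, and what you are missing, is the standard one-dimensional random-conductance \emph{corrector}: it takes
\[
V_N(t,x)=\sum_i\Bigl(J(t,i/N)+\frac{1}{N}\sum_{j<i}\partial_\theta J(t,j/N)\,b_j\Bigr)x_i,\qquad b_j:=\frac{\bar a}{a_j}-1.
\]
The point is that the discrete gradient of the coefficient then becomes (to leading order) $\tfrac{\bar a}{a_i}\partial_\theta J$ rather than $\partial_\theta J$, so the quadratic term is $\tfrac{1}{N}\sum_i a_i(\bar a/a_i)^2(\partial_\theta J)^2=\tfrac{1}{N}\sum_i(\bar a^2/a_i)(\partial_\theta J)^2\to\bar a\int(\partial_\theta J)^2$, which after optimization produces the correct constant $1/\bar a$. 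The corrector contributes only $O(1/N)$ to the linear terms (thanks to the second-moment bounds of Lemma~\ref{lem_borne_l2}), so those limits are unchanged. Without this corrector your argument proves a true but insufficient inequality.
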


\begin{proof}

Let $J : [0, T] \times \T \longrightarrow \R$ be a smooth function. Applying Lemma \ref{lem_h-1} with $J_N(t,x) := \frac{1}{N}\underset{i}{\sum} J(t, i/N)x_i + \underset{j = 1}{\stackrel{i-1}{\sum}} \frac{\partial J}{\partial \theta}(t, j/N)b_j x_i $ and $b_i = \frac{\bar{a}}{a_i} - 1$, we have
\begin{align}
\frac{1}{N}&\int_0^T{|\dot{\nu}_{N,t}|^2_{\nu_{N,t}}dt} \geq \frac{2}{N}\int{\sum J(T,i/N)x_i\nu_{N,T}(dx)} + \frac{2}{N^2} \int{\underset{i}{\sum} \underset{j = 1}{\stackrel{i-1}{\sum}} \frac{\partial J}{\partial \theta}(T, j/N)b_j x_i \nu_{N,T}(dx)} \notag \\
& \hspace{5mm} - \frac{2}{N}\int{\sum J(0,i/N)x_i\nu_{N,T}(dx)} - \frac{2}{N^2} \int{\underset{i}{\sum} \underset{j = 1}{\stackrel{i-1}{\sum}} \frac{\partial J}{\partial \theta}(0, j/N)b_j x_i \nu_{N,0}(dx)}\notag \\
& \hspace{5mm} - \frac{2}{N}\int_0^T{\int{\sum \frac{\partial J}{\partial t}(t, i/N) x_i \nu_{N,t}(dx)}dt} - \frac{2}{N^2} \int_0^T{\int{\underset{i}{\sum} \underset{j = 1}{\stackrel{i-1}{\sum}} \frac{\partial^2 J}{\partial t \partial \theta}(t, j/N)b_j x_i \nu_{N,t}(dx)}dt}\notag \\
& \hspace{5mm} - \frac{1}{N}\int_0^T{\sum a_i\left(NJ(t, \frac{i+1}{N}) - NJ(t, \frac{i}{N}) + b_i \frac{\partial J}{\partial \theta}(t, i/N)\right)^2dt}. \notag
\end{align}
Taking the limit $N \longrightarrow +\infty$ and using the second-moment bounds of Lemma \ref{lem_borne_l2}, we get 
\begin{align}
\frac{2}{N}\int{\sum J(T,i/N)x_i\nu_{N,T}(dx)}& + \frac{2}{N^2} \int{\underset{i}{\sum} \underset{j = 1}{\stackrel{i-1}{\sum}} \frac{\partial J}{\partial \theta}(T, j/N)b_j x_i \nu_{N,T}(dx)} \notag \\
&= \frac{2}{N}\int{\sum J(T,i/N)x_i\nu_{N,T}(dx)} + O\left(\frac{1}{N}\right) \notag \\
& \longrightarrow \int_{\T}{J(T,\theta)\rho(T,\theta)d\theta}. \notag
\end{align}
In the same way,
$$\frac{2}{N}\int{\sum J(0,i/N)x_i\nu_{N,T}(dx)} + \frac{2}{N^2} \int{\underset{i}{\sum} \underset{j = 1}{\stackrel{i-1}{\sum}} \frac{\partial J}{\partial \theta}(0, j/N)b_j x_i \nu_{N,0}(dx)}$$
$$\longrightarrow \int_{\T}{J(0,\theta)\rho(0,\theta)d\theta}$$
and 
$$\frac{2}{N}\int_0^T{\int{\sum \frac{\partial J}{\partial t}(t, i/N) x_i \nu_{N,t}(dx)}dt} + \frac{2}{N^2} \int_0^T{\int{\underset{i}{\sum} \underset{j = 1}{\stackrel{i-1}{\sum}} \frac{\partial^2 J}{\partial t \partial \theta}(t, j/N)b_j x_i \nu_{N,t}(dx)}dt}$$
$$\longrightarrow \int_0^T{\int_{\T}{\frac{\partial J}{\partial t}(t,\theta)\rho(t,\theta)d\theta}dt}.$$
Finally, using Lemma \ref{lem_random}, we get
$$\frac{1}{N}\int_0^T{\sum a_i\left(NJ(t, \frac{i+1}{N}) - NJ(t, \frac{i}{N}) + b_i \frac{\partial J}{\partial \theta}(t, i/N)\right)^2dt} $$
$$\longrightarrow \int_0^T{\int_{\T}{\bar{a}\left(\frac{\partial J}{\partial \theta}(t,\theta)\right)^2d\theta}dt}.$$
Combining these lower bounds, we get
$$\liminf \frac{1}{N}\int_0^T{|\dot{\nu}_{N,t}|^2_{\nu_{N,t}}dt} \geq 2\int{J(T,\theta)\rho(T,\theta)d\theta} - 2\int_{\T}{J(0,\theta)\rho(0,\theta)d\theta}$$
$$-2\int_0^T{\int_{\T}{\frac{\partial J}{\partial t}(t,\theta)\rho(t,\theta)d\theta}dt} - \int_0^T{\int_{\T}{\bar{a}\left(\frac{\partial J}{\partial \theta}(t,\theta)\right)^2d\theta}dt}.$$
Taking the supremum over all smooth functions $J$ yields the result.
\end{proof}

\begin{lem}[Lower bound for the upper gradient]
For any time $t$ and subsequence such that $$\underset{N}{\sup} \hspace{1mm} \frac{1}{N} \int{\frac{\langle A_0(\nabla \nu_t + \nu_t\nabla H), (\nabla \nu_t + \nu_t\nabla H) \rangle}{\nu_t}} < +\infty,$$
we almost surely have
$$\underset{N}{\liminf} \hspace{1mm} \frac{1}{N} \int{\frac{\langle A(\nabla \nu_t + \nu_t\nabla H), (\nabla \nu_t + \nu_t\nabla H) \rangle}{\nu_t}} \geq \bar{a} \int_{\T}{(\partial_{\theta}\varphi'(\rho(t, \theta)))^2d\theta}.$$
\end{lem}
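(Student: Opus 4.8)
The plan is to bound the weighted Dirichlet form from below by a supremum over macroscopic test functions and then homogenise in one dimension. Writing $g_{N,t}:=d\nu_{N,t}/d\mu_N$ and using $\tfrac{\nabla\nu_{N,t}}{\nu_{N,t}}+\nabla H=\nabla\log g_{N,t}$ (this identity does not involve the diffusion matrix), the left-hand side is $\tfrac1N\int\tfrac{\langle A\nabla g_{N,t},\nabla g_{N,t}\rangle}{g_{N,t}}\,d\mu_N$, while the hypothesis is exactly the uniform bound $\sup_N\tfrac1N\int\tfrac{\langle A_0\nabla g_{N,t},\nabla g_{N,t}\rangle}{g_{N,t}}\,d\mu_N<+\infty$. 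Since $A$ is symmetric and positive, completing the square in $\tfrac1g\langle A(\nabla g-g\xi),\nabla g-g\xi\rangle\ge0$ gives $\tfrac{\langle A\nabla g,\nabla g\rangle}{g}\ge 2\langle A\nabla g,\xi\rangle-g\langle A\xi,\xi\rangle$ for every vector field $\xi$; integrating against $\mu_N$, integrating by parts in the first term, and taking $\xi=\xi^N$ independent of $x$ (so $\text{div}(A\xi^N)=0$), one obtains
\begin{equation*}
\frac1N\int\frac{\langle A\nabla g_{N,t},\nabla g_{N,t}\rangle}{g_{N,t}}\,d\mu_N\;\ge\;\frac2N\int\langle A\xi^N,\nabla H\rangle\,d\nu_{N,t}\;-\;\frac1N\langle A\xi^N,\xi^N\rangle .
\end{equation*}

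Next I would feed in test fields carrying the one-dimensional homogenisation corrector, just as in the lower bound for the time-derivative proved above (which uses $b_i=\bar a/a_i-1$). Fix a smooth $\bar G:\T\to\R$ with $\int_{\T}\bar G\,d\theta=0$ — this is harmless, since the optimiser appearing at the end has mean zero — and let $\xi^N$ be the field whose discrete flux is the constant-per-bond quantity $N^2 a_i(\xi^N_{i+1}-\xi^N_i)=N\big(\bar G(i/N)-R_N\big)$, with $a_i$ the conductance of the $i$-th bond and $R_N\to0$ the constant making $\xi^N$ single-valued on the discrete torus. Then $(A\xi^N)_i$ is, up to sign, $N\big(\bar G(i/N)-\bar G((i-1)/N)\big)=\partial_\theta\bar G(i/N)+O(1/N)$, of macroscopic order one, and $\langle A\xi^N,\xi^N\rangle=\sum_i(\bar G(i/N)-R_N)^2/a_i$. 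The corrector is exactly what keeps $A\xi^N$ bounded: a naive macroscopic field $\xi^N_i=\bar G(i/N)$ would instead make $(A\xi^N)_i$ of size $N$, proportional to the microscopic increment $a_i-a_{i+1}$, which cannot be controlled once paired with $\psi'(x_i)$ — and this is why the harmonic mean $\bar a=1/\E[1/a_1]$, rather than $\E[a_1]$, is forced upon us.

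Then I would let $N\to\infty$ term by term. For the energy term, the $1/a_i$ are bounded positive i.i.d.\ variables, so Lemma \ref{lem_random} gives, almost surely, $R_N\to0$ and $\tfrac1N\langle A\xi^N,\xi^N\rangle=\tfrac1N\sum_i(\bar G(i/N)-R_N)^2/a_i\to\E[1/a_1]\int_{\T}\bar G^2\,d\theta=\tfrac1{\bar a}\int_{\T}\bar G^2\,d\theta$. For the cross term, $(\nabla H)_i=\psi'(x_i)$, so summing by parts and Taylor-expanding $\bar G$ turns $\tfrac1N\langle A\xi^N,\nabla H\rangle$ into $\tfrac1N\sum_i\partial_\theta\bar G(i/N)\,\psi'(x_i)$ plus a remainder whose $\nu_{N,t}$-expectation vanishes thanks to the $L^p$-moment bound of Lemma \ref{lem_borne_lp} applied to $g_{N,t}$ (legitimate by the bound recalled in the first paragraph). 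Using the weak convergence $\nu_{N,t}\rightharpoonup\rho(t,\cdot)$, the one-block estimate of Proposition \ref{prop_averaging} (with the truncation of $\psi'$ from the proof of Proposition \ref{conv_renforcees}, and the identity $\int\psi'\,d\mu^{\varphi'(y)}=\varphi'(y)$), the main term converges to $\int_{\T}\partial_\theta\bar G\,\varphi'(\rho(t,\cdot))\,d\theta=-\int_{\T}\bar G\,\partial_\theta\varphi'(\rho(t,\cdot))\,d\theta$, the integration by parts being licit since $\varphi'(\rho(t,\cdot))\in H^1(\T)$ by Lemma \ref{limites_dans_h1}. Putting the two limits together yields, for each fixed smooth mean-zero $\bar G$ and almost surely,
\begin{equation*}
\liminf_N\;\frac1N\int\frac{\langle A(\nabla\nu_{N,t}+\nu_{N,t}\nabla H),\nabla\nu_{N,t}+\nu_{N,t}\nabla H\rangle}{\nu_{N,t}}\;\ge\;-2\int_{\T}\bar G\,\partial_\theta\varphi'(\rho(t,\cdot))\,d\theta-\frac1{\bar a}\int_{\T}\bar G^2\,d\theta .
\end{equation*}

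Finally I would make this hold almost surely for all admissible $\bar G$ simultaneously by running the above along a countable family of smooth mean-zero functions dense in $\{f\in L^2(\T):\int_{\T}f=0\}$ (Lemma \ref{lem_random} already holds simultaneously along such a family), and then optimise over $\bar G$: $\sup_{\bar G}\big(-2\int_{\T}\bar G h-\tfrac1{\bar a}\int_{\T}\bar G^2\big)=\bar a\int_{\T}h^2$ with $h:=\partial_\theta\varphi'(\rho(t,\cdot))\in L^2(\T)$, the supremum being attained at $\bar G=-\bar a\,h$ (which has mean zero, $h$ being a periodic derivative). This is exactly $\bar a\int_{\T}(\partial_\theta\varphi'(\rho(t,\theta)))^2\,d\theta$. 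The step I expect to be the main obstacle is the limit of the cross term: the combination of the homogenisation corrector (which keeps $A\xi^N$ bounded and brings in the harmonic-mean constant $\bar a$) with a GPV-type replacement of the microscopic quantity $\psi'(x_i)$ by its local-equilibrium value $\varphi'(\rho(t,\cdot))$; the remaining ingredients — the duality inequality, the quenched law of large numbers, and the optimisation over $\bar G$ — are routine.
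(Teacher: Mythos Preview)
Your proof is correct and follows essentially the same route as the paper. The paper packages the constant test field as $\xi^N=\nabla\log G_N$ for an exponential density $G_N(x)=\exp\big(\sum_i c_i x_i\big)$ with $c_{i+1}-c_i\propto \bar a/a_i\cdot J(i/N)$, and obtains your completing-the-square inequality by expanding the nonnegative Fisher information of $g_{N,t}/G_N$ with respect to $G_N\mu_N$; your choice $N^2 a_i(\xi^N_{i+1}-\xi^N_i)=N(\bar G(i/N)-R_N)$ is the same field up to the reparametrisation $\bar G=\bar a\,J$, and the ensuing limits (Lemma \ref{lem_random} for the energy term, summation by parts plus Proposition \ref{conv_renforcees} for the cross term, optimisation over the test function) are identical.
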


\begin{proof}
From Lemma \ref{limites_dans_h1}, we know that, under these assumptions, $\varphi'(\rho)$ lies in $H^1(\T)$. 

Let $J : \T \rightarrow \R$ be a smooth function, and define
\begin{equation}
G_{N}(x) := \exp\left( \underset{i = 1}{\stackrel{N}{\sum}}\left(\underset{j = 1}{\stackrel{i}{\sum}} \hspace{1mm} \frac{\bar{a}}{a_j}J(i/N)\right) x_i \right).
\end{equation}

Since the upper gradient takes value $+\infty$ when $\nu_N$ is not absolutely continuous with respect to $\mu_N$, and we are looking for a lower bound, we can assume without loss of generality that $\nu_N$ is absolutely continuous with respect to $\mu_N$, and therefore to $G_{N}\mu_N$. Let $g_{N,t}$ be the density of $\nu_{N,t}$ with respect to $\mu_N$.

We consider the quantity $\int{\frac{\langle A\nabla \left(\frac{g_N}{G_{N}}\right), \nabla \left(\frac{g_N}{G_{N}}\right)\rangle}{g_{N,t}/G_{N}} G_Nd\mu_N}$, which is nonnegative. We have

\begin{align}
\int&{\frac{\langle A\nabla \left(\frac{g_N}{G_{N}}\right), \nabla \left(\frac{g_N}{G_{N}}\right)\rangle}{g_{N,t}/G_{N}} G_{N}d\mu_N} \notag \\
&= \int{\frac{\langle A\nabla g_{N,t}, \nabla g_{N,t} \rangle}{g_{N,t}} d\mu_N} - 2 \int{\frac{\langle A \nabla g_{N,t}, \nabla G_{N} \rangle}{G_{N}} d\mu_N} \notag \\
& \hspace{1cm} + \int{\frac{\langle A\nabla G_{N}, \nabla G_{N} \rangle}{G_{N}^2} d\mu_N} \notag \\
&= \int{\frac{\langle A\nabla g_{N,t}, \nabla g_{N,t} \rangle}{g_{N,t}} d\mu_N} - 2 \int{\frac{\langle A\nabla H,  \nabla G_{N} \rangle}{G_{N}} g_{N,t}d\mu_N}  \notag \\
& \hspace{1cm} + \int{\frac{\langle A\nabla G_{N}, \nabla G_{N} \rangle}{G_{N}^2} d\mu_N}. \notag 
\end{align}

Therefore, we have

\begin{equation}
\int{\frac{\langle A\nabla g_{N,t}, \nabla g_{N,t} \rangle}{g_{N,t}} d\mu_N} \geq 2 \int{\frac{\langle A\nabla H,  \nabla G_{N} \rangle}{G_{N}} g_{N,t}d\mu_N}  - \int{\frac{\langle A\nabla G_{N}, \nabla G_{N} \rangle}{G_{N}^2} d\mu_N}
\end{equation}
for any realization of the random field, any $N$ and any $t$.

Applying Lemma \ref{lem_random}, we have
\begin{align} 
\frac{1}{N}\int{\frac{\langle A\nabla G_{N,t}, \nabla G_{N,t} \rangle}{G_{N,t}^2} d\mu_N} &= \frac{1}{N}\underset{i = 1}{\stackrel{N}{\sum}} \hspace{1mm} \frac{\bar{a}^2}{a_i}J(i/N)^2\notag \\
&\longrightarrow \bar{a}\int_{\T}{J(\theta)^2d\theta}.
\end{align}

We also have
\begin{align}
\frac{1}{N}\int{\frac{\langle A\nabla H,  \nabla G_{N} \rangle}{G_{N}} g_{N,t}d\mu_N} &= \frac{\bar{a}}{N}\int{\sum (\psi'(x_{i+1}) - \psi'(x_i))J(i/N)g_N(x)\mu_N(dx)} \notag \\
&= \frac{\bar{a}}{N}\int{\sum \psi'(x_i)(J((i-1)/N) - J(i/N))g_N(x)\mu_N(dx)} \notag \\
&= \frac{\bar{a}}{N}\int{\sum \psi'(x_i)J'(i/N)g_N(x)\mu_N(dx)} \notag \\
& \hspace{5mm} + O\left(\frac{1}{N^2}\int{\sum |\psi'(x_i)|g_N(x)\mu_N(dx)}\right) \notag \\
&\longrightarrow \bar{a}\int_{\T}{\varphi'(\rho(\theta))J'(\theta)d\theta}
\end{align}
Combining these two lower bounds and taking the supremum over smooth functions $J$, we get the lower bound of our Lemma.
\end{proof}

From the previous Lemma and Fatou's Lemma, we can then deduce that, for a sequence that converges to a Dirac mass, and such that $J_N(f_N) \leq CN$, we have
$$\liminf \frac{1}{N}\int_0^T{\int{\frac{\langle A(\nabla \nu_t + \nu_t\nabla H), (\nabla \nu_t + \nu_t\nabla H) \rangle}{\nu_t}}dt} \geq \int_0^T{\bar{a} \int_{\T}{(\partial_{\theta}\varphi'(\rho(t, \theta)))^2d\theta}dt}$$
which was the last element we needed for the lower bound of the $\Gamma$-convergence.

We now turn to the recovery sequence. Given a profile $\rho(t,\theta)$ that is weakly continuous in time, and such that $\int_0^T{||\bar{a}\partial_{\theta}^2\varphi'(\rho) - \partial_t \rho ||_{H^{-1}}^2 dt}$ is finite, there exists a sequence of smooth profiles $\rho_k$ that converge to $\rho$, and such that $\int_0^T{||\bar{a}\partial_{\theta}^2\varphi'(\rho_k) - \partial_t \rho_k ||_{H^{-1}}^2 dt}$ converges to $\int_0^T{||\bar{a}\partial_{\theta}^2\varphi'(\rho) - \partial_t \rho ||_{H^{-1}}^2 dt}$. Therefore, in view of Remark \ref{rq1}, we only have to prove the existence of a recovery sequence for profiles $\rho$ that are smooth.

Given such a smooth profile $\rho$, there exists a continuous function $h : [0, T] \times \T \longrightarrow \R$ such that 
\begin{equation} \label{eq_recov_rand}
\frac{\partial \rho}{\partial t} = \bar{a}\frac{\partial}{\partial \theta}\left(h(t,\theta) + \frac{\partial}{\partial \theta} \varphi'(\rho) \right).
\end{equation}

We now consider a dynamic with law given by the time-dependent generator 
\begin{equation}
\tilde{L}_{a,N} = L_{a,N} + N\underset{i = 1}{\stackrel{N}{\sum}} \hspace{1mm} \frac{\bar{a}}{a_i}h(t,i/N)\left(\frac{\partial}{\partial x_{i+1}} - \frac{\partial}{\partial x_i}\right)
\end{equation}
and initial condition given by the local Gibbs state associated to $\rho(0,\cdot)$.

We need to prove two things : that the solutions to such dynamics converge to the deterministic profile $\rho$, and that $N^{-1}J_N(f_N)$ has the correct limit (where $f_N$ are the marginals of the law of the solution). The first part can be done in the same way as in [Q, Section 3], so we concentrate on the second part. We have
\begin{align}
\frac{1}{N}J_N(f_N) &= \frac{1}{2}\int_0^T{\left|\left|\frac{\partial f_N}{\partial t} - L_{a,N}f_N \right|\right|_{H^{-1}(A)}^2dt} \notag \\
&= \frac{1}{2}\int_0^T{\left|\left|\tilde{L}_{a,N}f_N - L_{a,N}f_N \right|\right|_{H^{-1}(A)}^2dt} \notag \\
&= \frac{1}{2}\int_0^T{\underset{i = 1}{\stackrel{N}{\sum}} \hspace{1mm} \frac{\bar{a}^2}{a_i}h(t,i/N)^2 dt} \notag \\
&\longrightarrow \frac{1}{2}\int_0^T{\int_{\T}{\bar{a}h(t,\theta)^2d\theta}dt}
\end{align}
and, using (\ref{eq_recov_rand}), it is easy to see that this is equal to $\frac{1}{2\bar{a}}\int_0^T{\left|\left| \frac{\partial \rho}{\partial t} - \bar{a}\frac{\partial^2}{\partial \theta^2}\varphi'(\rho)\right|\right|_{H^{-1}}^2dt}$, which was what we needed to prove.

We still have to prove exponential tightness for the laws of solutions to (\ref{sde_random}). It is given by the following two results: 

\begin{lem} \label{lem_tightness1} Let $\mathbb{P}_N$ be the law of a solution to the SDE (\ref{sde_random}) with initial condition $X_0$ having a distribution $f_0\mu_N$ that satisfies $\Ent_{\mu_N}(f_0) \leq CN$. Then
$$\underset{\ell \longrightarrow +\infty}{\lim} \hspace{1mm} \underset{N \longrightarrow +\infty}{\lim} \hspace{1mm} \frac{1}{N}\log \mathbb{P}_N\left(\underset{0 \leq t \leq T}{\sup} \hspace{1mm} \frac{1}{N} \sum |X^i_t| \geq \ell \right) = -\infty.$$

\end{lem}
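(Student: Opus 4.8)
The plan is a Chernoff bound built on an exponential supermartingale. Fix once and for all a smooth convex function $\phi:\R\to\R$ with $\phi(x)\ge|x|$, $\|\phi'\|_\infty\le 1$ and $\|\phi''\|_\infty\le 1$ (for instance $\phi(x)=\sqrt{1+x^2}$), and set $\Phi(x):=\sum_{i=1}^{N}\phi(x_i)$, so that $\sum_i|x_i|\le\Phi(x)$ pointwise. Since
$$\Big\{\sup_{0\le t\le T}\tfrac1N\sum_i|X^i_t|\ge\ell\Big\}\ \subseteq\ \Big\{\sup_{0\le t\le T}\Phi(X_t)\ge N\ell\Big\},$$
it suffices to bound $\mathbb{P}_N(\sup_{t\le T}\Phi(X_t)\ge N\ell)$ by $e^{-N c(\ell)}$ with $c(\ell)\to\infty$ as $\ell\to\infty$. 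For $\lambda>0$ introduce
$$M_t:=\exp\!\Big(\lambda\big(\Phi(X_t)-\Phi(X_0)\big)-\int_0^{t}\big(\lambda L_{a,N}\Phi+\lambda^2\langle A\nabla\Phi,\nabla\Phi\rangle\big)(X_s)\,ds\Big);$$
by It\^o's formula applied to \eqref{sde_random} (whose generator is $L_{a,N}$ and whose diffusion matrix is $2A$), $M$ is a nonnegative local martingale with $M_0=1$, hence a supermartingale, and Doob's maximal inequality gives $\mathbb{P}_N(\sup_{t\le T}M_t\ge a)\le 1/a$ for all $a>0$.

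The core of the proof — and the step I expect to be the main obstacle — is a uniform control of the exponent correction. A discrete summation by parts shows that the drift part of $L_{a,N}\Phi$ equals $-N^2\sum_i a_i\big(\psi'(x_{i+1})-\psi'(x_i)\big)\big(\phi'(x_{i+1})-\phi'(x_i)\big)$, which is nonpositive: $\phi$ is convex and, by \eqref{hyp_psi}, $\psi$ is convex up to the bounded $C^2$ perturbation $\delta\psi$, so the two increments have the same sign and the $\delta\psi$-part is absorbed into the dominating principal part (for $p>2$ one argues separately near the zeros of $\psi''$, where $\phi'(x_{i+1})-\phi'(x_i)=O(|x_{i+1}-x_i|)$). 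The remaining second-order part of $L_{a,N}\Phi$ and the carr\'e-du-champ term $\langle A\nabla\Phi,\nabla\Phi\rangle$ involve only $\phi',\phi''$ and the conductances, which are bounded by the uniform ellipticity hypothesis; choosing $\lambda$ small enough, the nonpositive dissipative term also absorbs the $\lambda^2$ carr\'e-du-champ term, and one obtains
$$\lambda L_{a,N}\Phi(x)+\lambda^2\langle A(\omega)\nabla\Phi(x),\nabla\Phi(x)\rangle\ \le\ C\,N\qquad\text{uniformly in }x,\,N,\,\omega.$$
The delicate point is that, because of the $N^2$ hydrodynamic scaling of the generator, the second-order and carr\'e-du-champ terms are individually of order $N^3$, so this bound cannot be read off term by term: the dissipative term must do the absorbing, which forces one to pass to a time-integrated version of the estimate and to close it with the uniform logarithmic Sobolev inequality of [MO] together with the gradient-flow energy identity $\tfrac12\int_0^{T}g(\nu_{N,t})^2\,dt\le\Ent_{\mu_N}(\nu_{N,0})\le CN$ obtained by integrating \eqref{def_gf} — in the spirit of the tightness estimates of [DV] and [Q].

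Granting the exponent bound, on $\{\sup_{t\le T}M_t<a\}$ one has $\lambda\Phi(X_t)\le\lambda\Phi(X_0)+CNT+\log a$ for every $t\le T$, whence
$$\mathbb{P}_N\big(\sup_{t\le T}\Phi(X_t)\ge N\ell\big)\ \le\ \mathbb{P}_N\big(\Phi(X_0)\ge\tfrac{N\ell}{2}\big)+\mathbb{P}_N\big(\sup_{t\le T}M_t\ge e^{\lambda N\ell/2-CNT}\big)\ \le\ \mathbb{P}_N\big(\Phi(X_0)\ge\tfrac{N\ell}{2}\big)+e^{-\lambda N\ell/2+CNT}.$$
For the first term, the initial law is the local Gibbs state \eqref{init_GL}, whose single-site marginals on $X_{N,m_0}$ carry weights proportional to $e^{\varphi'(\rho_0(i/N))x-\psi(x)}dx$; since $\psi$ grows like $\tfrac1p|x|^p$, each $\phi(X^i_0)$ has finite exponential moments uniformly in $i$ and $N$, so a Cram\'er bound for the (asymptotically independent) sum — the hyperplane constraint costing only a sub-exponential factor via the local limit Theorem \ref{lct} — gives $\mathbb{P}_N(\Phi(X_0)\ge\tfrac{N\ell}{2})\le e^{-N\Lambda^*(\ell/2)}$ with $\Lambda^*(s)\to\infty$. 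Taking $\tfrac1N\log$, letting $N\to\infty$ and then $\ell\to\infty$ yields $\lim_{\ell\to\infty}\limsup_{N\to\infty}\tfrac1N\log\mathbb{P}_N\big(\sup_{t\le T}\tfrac1N\sum_i|X^i_t|\ge\ell\big)\le\lim_{\ell\to\infty}\max\big(-\Lambda^*(\ell/2),\,CT-\tfrac{\lambda\ell}{2}\big)=-\infty$, as claimed.
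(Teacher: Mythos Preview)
Your exponential--supermartingale strategy does not close, and the gap is exactly the step you flag yourself. The bound
\[
\lambda L_{a,N}\Phi(x)+\lambda^2\langle A(\omega)\nabla\Phi(x),\nabla\Phi(x)\rangle\ \le\ CN\qquad\text{uniformly in }x,N,\omega
\]
is simply false. At any constant configuration $x_i\equiv c$ the dissipative drift term vanishes identically, while the second--order part $N^2\sum_i(a_i+a_{i-1})\phi''(x_i)$ is of order $N^3$; no choice of small $\lambda$ repairs this, and the $\delta\psi$-correction to the drift (bounded only by $CN^2\sum_i|x_{i+1}-x_i|^2$) is likewise not uniformly $O(N)$. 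Your proposed fix---``pass to a time-integrated version and close it with the LSI and the gradient-flow energy identity''---cannot work here: those tools yield bounds on \emph{expectations} of integrated Fisher information of the time-marginals, whereas Doob's maximal inequality for $M_t$ requires a \emph{pathwise} (almost-sure) upper bound on the exponent correction $\int_0^t(\lambda L_{a,N}\Phi+\lambda^2\langle A\nabla\Phi,\nabla\Phi\rangle)(X_s)\,ds$. There is no mechanism to convert one into the other, so the argument stalls before you can invoke Doob.

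The paper's proof takes a genuinely different route that sidesteps the $N^2$ obstruction. It first works with the \emph{stationary} process $\tilde P^{eq,N}$ (initial law $\mu_N$) and applies the Kipnis--Varadhan maximal inequality for reversible dynamics: for the symmetric function $g(x)=\exp\!\big(\sum_i|x_i|\big)$ one has
\[
\tilde P^{eq,N}\!\Big(\sup_{0\le t\le T}g(X_t)\ge \ell\Big)\ \le\ \frac{3}{\ell}\sqrt{a+Tb},\qquad a=\int g^2\,d\mu_N,\quad b=\int\frac{\langle A\nabla g,\nabla g\rangle}{g}\,d\mu_N.
\]
The point is that $a$ and $b$ are equilibrium integrals, computable via the product structure of $\mu_N$: one gets $a\le C^N$ and $b\le N^2C^N$, hence (Chebyshev) $\tilde P^{eq,N}(\sup_t\frac1N\sum|X^i_t|\ge\ell)\le C'e^{-C''N\ell}$. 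The $N^2$ in $b$ is harmless here because it sits under a square root next to $C^N$. The nonequilibrium initial condition is then handled in a single stroke by the basic entropy inequality
\[
\mathbb P_N(A)\ \le\ \frac{\log 2+H_N}{\log\!\big(1+1/\tilde P^{eq,N}(A)\big)},
\]
using only $H_N=\Ent_{\mu_N}(f_0)\le CN$; no separate Cram\'er analysis of the initial Gibbs state is needed. The moral is that in the hydrodynamic scaling one should not try to control $L_{a,N}$ pathwise on an exponential test function; the reversible maximal inequality plus entropy transfer is the right tool.
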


\begin{lem}\label{lem_tightness2} 
Under the same assumptions as the previous lemma, for any $\epsilon > 0$ and any smooth function on the torus $J$, we have
$$\underset{\delta \longrightarrow 0}{\lim} \hspace{1mm} \underset{N \longrightarrow +\infty}{\lim} \hspace{1mm} \frac{1}{N}\log \mathbb{P}_N\left( \underset{0 \leq s \leq t \leq T, |s - t| \leq \delta}{\sup} \hspace{2mm} \left|\frac{1}{N}\sum J(i/N)(X^i_t - X^i_s)\right| \geq \epsilon \right) = -\infty.$$
\end{lem}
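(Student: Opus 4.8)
The plan is to run a Doob/exponential–martingale estimate not on $\pi^N_t(J):=\frac1N\sum_iJ(i/N)X^i_t$ directly, but on a suitably \emph{corrected} linear functional of the configuration. First recall that Lemma~\ref{lem_tightness1}, available under the standing hypothesis $\Ent_{\mu_N}(f_0)\le CN$ (which holds for the local Gibbs data (\ref{init_GL})), already controls $\sup_{0\le t\le T}\frac1N\sum_i|X^i_t|$: it exceeds any fixed level only with super-exponentially small probability in the double limit $\lim_{\delta\to0}\lim_{N\to\infty}$ at hand. Set $b_j:=\bar{a}/a_j-1$ (i.i.d., with $\mathbb E[b_1]=0$) and define
\[
V^N_t:=\frac1N\sum_iJ(i/N)X^i_t+\frac1{N^2}\sum_i\Big(\sum_{j=1}^{i-1}J'(j/N)b_j\Big)X^i_t,
\]
the time-independent version of the corrector already used for the lower bound on the slope. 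By the law of large numbers in the weighted form of Lemma~\ref{lem_random}, almost surely in the environment $\varepsilon_N:=\sup_{i\le N}\big|\frac1N\sum_{j<i}J'(j/N)b_j\big|\to0$, hence $\sup_t|V^N_t-\pi^N_t(J)|\le\varepsilon_N\cdot\sup_t\frac1N\sum_i|X^i_t|$; together with Lemma~\ref{lem_tightness1} this difference is super-exponentially negligible, so it suffices to prove the modulus-of-continuity estimate for $t\mapsto V^N_t$.

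By It\^o's formula applied to (\ref{sde_random}) one has $V^N_t=V^N_0+\int_0^tD^N_s\,ds+M^N_t$, with $D^N_s$ the generator $L_{a,N}$ applied to the configuration function defining $V^N$ and $M^N$ a continuous martingale. The whole point of the corrector is the identity $a_j(1+b_j)=\bar{a}$: after two discrete summations by parts it cancels the conductance–fluctuation term and leaves
\[
D^N_s=\frac{\bar{a}}{N}\sum_iJ''(i/N)\psi'(X^i_s)+O\Big(\tfrac1N\sum_i\big(1+|\psi'(X^i_s)|\big)\Big),
\]
so that $|D^N_s|\le C\big(1+\tfrac1N\sum_i\Psi(X^i_s)\big)$ with $C$ a deterministic constant (depending only on $\|J\|_{C^2}$, the ellipticity constant $c$ and $\psi$) and $\Psi$ of polynomial growth, of order $|x|^{p-1}$. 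Similarly, using the corrector identity in the form $a_j(1+b_j)^2=\bar{a}^2/a_j$, the bracket of $M^N$ has density $\frac{2\bar{a}^2}{N^2}\sum_i\frac{J'(i/N)^2}{a_i}+O(N^{-2})$, so $\langle M^N\rangle_T\le C_\omega/N$ with $C_\omega$ almost surely finite by the law of large numbers.

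For the martingale part, partition $[0,T]$ into $\lceil T/\delta\rceil$ blocks of length $\delta$; since any $0\le s\le t\le T$ with $t-s\le\delta$ lies in a union of two consecutive blocks, the event $\{\sup_{|t-s|\le\delta}|M^N_t-M^N_s|\ge\epsilon/2\}$ is contained in the union over blocks $[\sigma,\sigma+2\delta]$ of $\{\sup_{\sigma\le r\le\sigma+2\delta}|M^N_r-M^N_\sigma|\ge\epsilon/4\}$. On each block, Doob's maximal inequality for the exponential supermartingale $\exp\big(\mu(M^N_r-M^N_\sigma)-\tfrac{\mu^2}2(\langle M^N\rangle_r-\langle M^N\rangle_\sigma)\big)$, together with $\langle M^N\rangle_{\sigma+2\delta}-\langle M^N\rangle_\sigma\le2\delta C_\omega/N$ and the choice of $\mu$ of order $N\epsilon/(\delta C_\omega)$, gives a bound of the form $2\exp(-c\,\epsilon^2N/(\delta C_\omega))$. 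Summing over the $\lceil T/\delta\rceil$ blocks, $\frac1N\log\mathbb P_N(\sup_{|t-s|\le\delta}|M^N_t-M^N_s|\ge\epsilon/2)\le\frac1N\log(CT/\delta)-c\,\epsilon^2/(\delta C_\omega)$, which tends to $-c\,\epsilon^2/(\delta C_\omega)$ as $N\to\infty$ and then to $-\infty$ as $\delta\to0$.

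For the drift part, over the same partition $\sup_{|t-s|\le\delta}\big|\int_s^tD^N_r\,dr\big|\le2\max_k\int_{I_k}|D^N_r|\,dr\le2C\delta\big(1+\sup_{r\le T}\tfrac1N\sum_i\Psi(X^i_r)\big)$, so this supremum can reach $\epsilon/2$ only if $\sup_{r\le T}\tfrac1N\sum_i\Psi(X^i_r)\ge\frac{\epsilon}{4C\delta}-1$, whose right-hand side tends to $+\infty$ in the double limit; by the higher-moment analogue of Lemma~\ref{lem_tightness1} — proved by the same Girsanov/relative-entropy argument, now invoking the $L^p$ transport–entropy inequality underlying Lemma~\ref{lem_borne_lp} in place of its $L^2$ counterpart — this probability is super-exponentially small. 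Combining the three contributions (passage from $\pi^N(J)$ to $V^N$, martingale, drift) proves the lemma, on the single full-measure set of environments where the laws of large numbers invoked above hold, which is the meaning of ``quenched'' here. I expect the drift estimate to be the crux: for the bare functional $\pi^N(J)$ the drift carries a term $\sum_i(a_i-a_{i-1})J'(i/N)\psi'(X^i_s)$ of size $O(N)$, so that $\int_s^tD^N_r\,dr$ would be of order $N\delta$ and the scheme would collapse; it is exactly the corrector, via $a_j(1+b_j)=\bar{a}$, that reduces the drift to $O(1)$ (and, incidentally, the bracket to $O(1/N)$). The only ingredient not already present in the excerpt is the $(p-1)$-th moment version of Lemma~\ref{lem_tightness1}, which is routine under (\ref{hyp_psi}) but does require the transport–entropy input rather than a crude a priori bound.
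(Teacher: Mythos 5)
The paper's own proof of this lemma is a one-line reference: it invokes [DV, Lemma~2.8] and asserts that the argument carries over verbatim because the $a_i$ are bounded. The DV-type argument runs through the process started at equilibrium, where one can express $\pi^N_t(J)-\pi^N_s(J)$ (or bound its exponential moments) using only the carré du champ/Dirichlet form via a Kipnis--Varadhan maximal inequality or the forward--backward martingale decomposition, and then transfers to the local Gibbs initial data. That route never touches the drift $L_a g$, and the carré du champ $\Gamma_a(g,g)=2\sum_i a_i\big(J(\tfrac{i+1}{N})-J(\tfrac{i}{N})\big)^2$ is equivalent (up to the ellipticity constant $c$) to the constant-conductance one, which is precisely why the paper can claim the proof is ``exactly the same.''

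Your proof is a genuinely different route, and, to my reading, a correct one. You observe --- correctly --- that a direct semimartingale decomposition of $\pi^N_t(J)$ fails because after summation by parts the drift picks up a term $\sum_i(a_i-a_{i-1})J'(i/N)\psi'(X^i)$ of size $O(N)$, so you replace $\pi^N(J)$ by the corrected functional $V^N$ built from $b_j=\bar a/a_j-1$. The identity $a_j(1+b_j)=\bar a$ then cancels the conductance fluctuations at the level of the generator, leaving $L_a\tilde g=O\big(\tfrac1N\sum_i|\psi'(X^i)|\big)$, and the error $\sup_t|V^N_t-\pi^N_t(J)|\le\varepsilon_N\sup_t\tfrac1N\sum|X^i_t|$ is handled by Lemma~\ref{lem_tightness1} once $\varepsilon_N\to0$ a.s. (a Doob/Kolmogorov estimate on the partial sums of $J'(j/N)b_j$ --- your invocation of Lemma~\ref{lem_random} is a bit loose here since you need a sup-over-$i$ version, but this is standard). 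The remaining work --- exponential-supermartingale Doob estimate on overlapping $\delta$-blocks for $M^N$ and a pathwise bound on the drift reduced to a moment estimate --- is sound. The one ingredient you leave implicit is the ``higher-moment analogue'' of Lemma~\ref{lem_tightness1} for $\sup_r\tfrac1N\sum\Psi(X^i_r)$ with $\Psi\sim|x|^{p-1}$: this indeed follows by repeating the paper's proof of that lemma with $g(x)=\exp\big(\sum|x_i|^{p-1}\big)$ in the Kipnis--Varadhan maximal inequality (which converges since $|x|^{p-1}=o(\psi(x))$ under (\ref{hyp_psi})), rather than through the transport--entropy inequality of Lemma~\ref{lem_borne_lp} as you suggest. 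One small factual slip in your closing remark: the bracket of the \emph{uncorrected} $\pi^N(J)$ is already $O(1/N)$, since $\Gamma_a(g,g)=\tfrac{2}{N^2}\sum a_iJ'(i/N)^2+O(N^{-2})$; the corrector is needed only to tame the drift, not the bracket. In short: the paper sidesteps the drift entirely by working at stationarity and citing [DV]; you neutralize the drift with an explicit environment-dependent corrector and estimate it directly. Both work; yours is more constructive and avoids the equilibrium-to-local-Gibbs transfer, at the cost of the corrector construction and an additional moment estimate.
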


\begin{proof} [Proof of Lemma \ref{lem_tightness1}] 

This proof is exactly the same as in [DV], we give a brief sketch to show that the random field $(a_i)$ does not make any difference..

Let $\tilde{P}^{eq, N}$ be the law of a solution to the SDE starting from the equilibrium measure $\mu_N$. From [KV, Lemma 1.12], we know that, for any symmetric function $g$ on $\R^N$, we have
$$\tilde{P}^{eq, N}\left(\underset{0 \leq t \leq T}{\sup} g(X_t) \geq \ell \right) \leq \frac{3}{\ell} \sqrt{a + Tb}$$
with $a = \int{g^2d\mu_N}$ and $b = \int{\frac{A\nabla g, \nabla g}{g}d\mu_N}.$
When $g(x) = \exp \left( \sum |x_i| \right)$, there exists $C > 0$ such that $a \leq C^N$ and $b \leq N^2 C^N$. Using the Tchebychev inequality, we obtain 
$$\tilde{P}^{eq, N}\left(\underset{0 \leq t \leq T}{\sup} \frac{1}{N} \sum |X^i_t| \geq \ell \right) \leq \sqrt{C^N(1 + N^2)}e^{-N\ell} \leq C'e^{-C''N\ell}.$$
We can then use the basic entropy inequality
$$ \mathbb{P}_N(A) \leq \frac{\log(2) + H_N}{\log(1 + 1/\tilde{P}^{eq, N}(A))},$$
where $H_N$ is the relative entropy of the non equilibrium process with respect to the equilibrium process, which satisfies the bound $H_N \leq CN$, to get the result.
\end{proof}

In the same way, we refer the reader interested in the proof of Lemma \ref{lem_tightness2} to [DV, Lemma 2.8]. The proof is exactly the same, since the random variables $a_i$ are bounded.
\end{proof}

\subsection{Large deviations for the non-gradient Ginzburg-Landau model}

We consider the SDE given by
\begin{equation} \label{sde_nongrad}
dX^i_t = N^2(W_{i, i+1} - W_{i-1, i})dt +N\sqrt{2a(X^i_t, X^{i+1}_t)}dB^{i+1}_t - N\sqrt{2a(X^{i-1}_t, X^i_t)}dB^i_t
\end{equation}
where 
\begin{equation}
W_{i, i+1}= a(X^i_t, X^{i+1}_t)(\psi'(X^i_t) - \psi'(X^{i+1}_t)) -\frac{\partial a}{\partial x}(X^i_t, X^{i+1}_t) + \frac{\partial a}{\partial y}(X^i_t, X^{i+1}_t).
\end{equation}
The marginals of the law of a solution to this SDE solve the PDE
\begin{equation}
\frac{\partial f\mu_N}{\partial t} = \nabla \cdot(A(x)\nabla f\mu_N)
\end{equation}
where the matrix $A(x)$ is given by
$$A(x)_{i,j}:= a(x_{i-1}, x_i)(\delta_{i,j} - \delta_{i, j+1}) + a(x_i, x_{i+1})(\delta_{i,j} - \delta_{i, j-1}).$$

Once more, we will assume that the initial data $f_0$ is of the form (\ref{init_GL}).

The generator of this dynamic is given by
\begin{equation}
Lf = N^2\sum e^{H}\left(\frac{\partial}{\partial x_{i+1}} - \frac{\partial}{\partial x_{i+1}} \right)e^{-H}a(x_i, x_{i+1})\left(\frac{\partial}{\partial x_{i+1}} - \frac{\partial}{\partial x_{i+1}} \right)f
\end{equation}

It has been shown in [Va] that trajectories of such a dynamic concentrate around the solution to the PDE

\begin{equation}
\frac{\partial \rho}{\partial t} = \frac{\partial}{\partial \theta}\left(\hat{a}(\rho)\frac{\partial}{\partial \theta}\varphi'(\rho) \right)
\end{equation} 
where $\varphi$ is the same function as in the previous section, and $\hat{a}$ is a bounded continuous function, which we shall now define.

Let $F : \R^{2k+1} \longrightarrow \R$ be a bounded, smooth function of a finite number of variables. The function $\xi : x \in \R^{\infty} \longrightarrow \underset{i = -\infty}{\stackrel{+\infty}{\sum}} \hspace{1mm} F(x_{i-k},.., x_{i+k})$ is not well-defined, but its partial derivatives are. We can therefore define
\begin{equation}
a_F(y) := \int{a(x_0, x_1)\left(1- \frac{\partial\xi}{\partial x_1} + \frac{\partial\xi}{\partial x_0}\right)^2\mu^{\infty, y}(dx)}
\end{equation}
where $\mu^{\infty, y}$ is the product measure on $\R^{\infty}$ with every one-dimensional marginal having density $Z^{-1}\exp(\lambda x - \psi(x))$, with $\lambda$ the unique real number such that this measure has expectation $y$.

The function $\hat{a}$ is then given by
\begin{equation}
\hat{a}(y) := \underset{F}{\inf} \hspace{1mm} a_F(y)
\end{equation}
with the infimum running over the set of all smooth, bounded functions of a finite number of variables.

In the next proposition, we summarize a few properties of the function $\hat{a}$: 

\begin{prop} \label{prop_coeff_diff}
(i) $\hat{a}$ is a bounded, continuous function.

(ii) For any $\epsilon > 0$ and $C < +\infty$, there exists a smooth real-valued function $g(x_k,..,x_k,y)$ on $\R^{2k+2}$ with bounded first derivatives such that 
$$\underset{|y| \leq C}{\sup} \hspace{1mm} \left( a_{g(\cdot, y)}(y) - \hat{a}(y) \right) < \epsilon$$
and
$$\underset{y \in \R}{\sup} \hspace{1mm} \left( a_{g(\cdot, y)}(y) - \hat{a}(y) \right) \leq ||a||_{\infty}.$$
\end{prop}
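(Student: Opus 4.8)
The plan is to treat the boundedness of $\hat{a}$ by hand, to reduce continuity to a known variational fact, and to prove (ii) by a convexity-plus-partition-of-unity patching of near-optimizers. Boundedness is immediate: taking $F\equiv 0$ in the definition gives $\hat{a}(y)\le a_0(y)=\int a(x_0,x_1)\,\mu^{\infty,y}(dx)\le \|a\|_\infty$, where $\|a\|_\infty:=\sup a<\infty$ by the ellipticity bound $a\le c$, while $\hat{a}(y)\ge 0$ because each $a_F(y)$ is the integral of the nonnegative quantity $a(x_0,x_1)\big(1-\partial_{x_1}\xi+\partial_{x_0}\xi\big)^2$. Thus $0\le\hat{a}\le\|a\|_\infty$, and this two-sided bound is exactly what will make the global comparison in (ii) come out with the constant $\|a\|_\infty$.

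For continuity, I would first check that for each fixed smooth bounded $F$ of finitely many variables the map $y\mapsto a_F(y)$ is continuous: the integrand depends only on the coordinates $x_{-k},\dots,x_{k+1}$, and the marginal of $\mu^{\infty,y}$ on those coordinates is a product of copies of $Z^{-1}\exp(\lambda(y)x-\psi(x))\,dx$, with $y\mapsto\lambda(y)$ continuous (it inverts the strictly increasing map $\lambda\mapsto\partial_\lambda\log Z(\lambda)$); dominated convergence, using the bounds on $a$ and on the derivatives of $F$ together with locally uniform moment bounds for $\mu^\lambda$, then gives continuity of $a_F$. As an infimum of continuous functions, $\hat{a}$ is upper semicontinuous. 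Lower semicontinuity is the subtle point, and here I would invoke the dual (supremum) representation of the homogenized diffusion coefficient furnished by Varadhan's non-gradient method in [Va] (see also [Q]), which writes $\hat{a}(y)$ as a supremum of functions continuous in $y$; combining the two directions yields (i).

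For (ii), the key observation is that $a_F(y)$ is a convex function of $F$ along affine combinations. Writing $D_F:=1-\partial_{x_1}\xi_F+\partial_{x_0}\xi_F$, where $\xi_F=\sum_i F(x_{i-k},\dots,x_{i+k})$ (only the finitely many partial derivatives are involved, and $D_F$ depends affinely on $F$), one has $a_F(y)=\|D_F\|_{L^2(a\,\mu^{\infty,y})}^2$, so $a_{\sum_j\lambda_jF_j}(y)\le\sum_j\lambda_j a_{F_j}(y)$ whenever $\lambda_j\ge 0$ and $\sum_j\lambda_j=1$, by the triangle inequality and convexity of $t\mapsto t^2$; in particular near-minimality at a point is inherited by such combinations. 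Now fix $\epsilon>0$ and $C<\infty$; we may assume $\epsilon\le\|a\|_\infty$, since otherwise $g\equiv 0$ already works. For each $y$ with $|y|\le C$ pick a near-optimizer $F_y$ (smooth, with bounded first derivatives, which suffices for the infimum) with $a_{F_y}(y)<\hat{a}(y)+\epsilon/2$, and by continuity of $a_{F_y}$ and of $\hat{a}$ an open $U_y\ni y$ on which $a_{F_y}<\hat{a}+\epsilon/2$. Extract a finite subcover $U_{y_1},\dots,U_{y_m}$ of $[-C,C]$, relabel so that all $F_{y_j}$ depend on the same $2k+1$ variables, take a standard smooth partition of unity $\chi_1,\dots,\chi_m$ with $\chi_j\ge 0$ supported in $U_{y_j}$, $\sum_j\chi_j=1$ on $[-C,C]$ and $\le 1$ everywhere, set $\chi_0:=1-\sum_{j\ge 1}\chi_j\ge 0$ (so $\chi_0=0$ on $[-C,C]$) and $F_{y_0}:=0$, and define $g(x_{-k},\dots,x_k,y):=\sum_{j=1}^m\chi_j(y)F_{y_j}(x_{-k},\dots,x_k)$. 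Then $g$ is smooth with bounded first derivatives, and for each $y$ it equals the affine combination $\sum_{j=0}^m\chi_j(y)F_{y_j}$ with weights in $[0,1]$ summing to $1$; the convexity bound gives $a_{g(\cdot,y)}(y)\le\chi_0(y)\|a\|_\infty+(1-\chi_0(y))\big(\hat{a}(y)+\epsilon/2\big)$, hence $a_{g(\cdot,y)}(y)-\hat{a}(y)\le\chi_0(y)\big(\|a\|_\infty-\hat{a}(y)\big)+(1-\chi_0(y))\epsilon/2\le\|a\|_\infty$ for all $y$ (using $0\le\hat{a}\le\|a\|_\infty$ and $\epsilon/2\le\|a\|_\infty$), and $\le\epsilon/2<\epsilon$ when $|y|\le C$ since there $\chi_0(y)=0$.

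The main obstacle is the lower semicontinuity in (i): because $\hat{a}$ is an infimum, it cannot be obtained from the trivial direction, and one needs either the dual variational formula or a compactness statement for near-optimizing forms, both of which belong to the closed/exact forms machinery of the non-gradient method, so I would cite [Va] rather than reprove it. Everything in (ii) is then soft; the only points to watch are that the patched $g$ retains bounded first derivatives (clear, being a finite combination of smooth bounded ingredients) and that the global gap stays exactly $\le\|a\|_\infty$, which works out precisely because $0\le\hat{a}\le a_0\le\|a\|_\infty$.
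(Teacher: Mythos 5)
The paper provides no proof of this Proposition at all: it simply records that part (i) comes from [Va] and part (ii) from [Q], and defers entirely to those references. You instead supply a genuine argument, and it is essentially correct and matches in spirit what those sources do. The boundedness $0 \le \hat{a} \le \|a\|_\infty$ from the test choice $F\equiv 0$ is fine; you correctly identify that upper semicontinuity of $\hat{a}$ is the trivial direction (infimum of continuous functions $y\mapsto a_F(y)$), while lower semicontinuity is the delicate point that requires the closed/exact-forms machinery behind the dual representation of the diffusion coefficient, and you appropriately cite [Va] for that rather than reproving it. Your treatment of (ii) — exploiting that $a_F(y)=\|D_F\|^2_{L^2(a\,\mu^{\infty,y})}$ with $D_F$ affine in $F$, so $F\mapsto a_F(y)$ is convex along affine combinations, then extracting a finite subcover of $[-C,C]$ by neighborhoods of near-optimizers and patching with a partition of unity whose complementary weight $\chi_0$ is assigned to $F\equiv 0$ so the global gap is controlled by $a_0\le\|a\|_\infty$ — is a clean, self-contained reconstruction of Quastel's construction, and the bookkeeping (the reduction to $\epsilon\le\|a\|_\infty$, the use of $0\le\hat{a}\le\|a\|_\infty$) works out.

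One point deserves more care. The definition of $\hat{a}$ takes the infimum over \emph{all} smooth bounded functions of finitely many variables, with no a priori control on their derivatives, while your construction requires near-optimizers $F_y$ with bounded first derivatives (otherwise the patched $g$ need not have bounded first derivatives). You dismiss this with the parenthetical ``which suffices for the infimum'', but this is an actual intermediate step: one must show that restricting the infimum to $F$ with bounded first derivatives does not change $\hat{a}(y)$. This can be done by a smooth truncation of the derivatives of $F$, using the exponential tails of $\mu^{\infty,y}$ to show the error in $a_F(y)$ is small, but the claim should be stated and justified rather than taken for granted. Apart from that minor gap the argument is sound.
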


Part (i) of this Proposition comes from [Va], and part (ii) from [Q]. 

Our aim is to prove the following large deviations result: 

\begin{thm} \label{thm_ld_nongradient}
Assume that the sequence of initial data is of the form (\ref{init_GL}) for some smooth initial profile $\rho_0$. The sequence of random functions satisfies a LDP in $L^{\infty}(H^{-1})$ with speed $N$ and rate function
\begin{align}
I(\rho) &:= \int{\varphi(\rho(0,\theta)) - \varphi(m_0(\theta)) - \varphi'(m_0(\theta))(\rho(\theta) - m_0(\theta))d\theta} \notag \\
& \hspace{1cm} + \frac{1}{4}\int_0^T{\left|\left| \frac{\partial \rho}{\partial t} - \frac{\partial}{\partial \theta}\left(\hat{a}(\rho)\frac{\partial}{\partial \theta}\varphi'(\rho)\right)\right|\right|_{H^{-1}(\hat{a}(\rho(t,\cdot)))}^2dt}. \notag
\end{align}

\end{thm}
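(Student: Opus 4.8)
The plan is to reproduce the proof of Theorem~\ref{thm_ldp_rand} essentially line by line, the only genuinely new ingredient being that the homogenization of the random conductances is replaced by Varadhan's non-gradient homogenization of the configuration-dependent conductance $a(x_i,x_{i+1})$, with the homogenized coefficient $\hat a$ of Proposition~\ref{prop_coeff_diff}. By Corollary~\ref{cor_ent} it suffices to prove two things: that $\frac1N\big(\Ent_{f_{0,N}\mu_N}(\cdot)+\tfrac12 J_N(\cdot)\big)$ $\Gamma$-converges to $I$ at every Dirac mass $\delta_{(\rho_t)}$ indexed by a continuous trajectory, and that the laws of solutions of (\ref{sde_nongrad}) are exponentially tight in $L^\infty(H^{-1})$. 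For exponential tightness, the analogues of Lemmas~\ref{lem_tightness1} and \ref{lem_tightness2} go through unchanged: their proofs use only the uniform logarithmic Sobolev and Poincar\'e inequalities, the bound $\Ent_{\mu_N}(f_{0,N})\le CN$, and the boundedness of $a$, all of which are available here, so the arguments of [DV] and [Q] apply verbatim.

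\textbf{The $\Gamma$-liminf inequality.} Take $\nu_N\to\delta_{(\rho_t)}$ with $\sup_N\frac1N\big(\Ent_{f_{0,N}\mu_N}(\nu_{N,0})+\tfrac12 J_N(\nu_N)\big)<\infty$; then Lemmas~\ref{lem_borne_lp} and \ref{limites_dans_h1} give the $L^p$ moment control and the regularity $\varphi'(\rho(t,\cdot))\in H^1(\T)$ for a.e.\ $t$. Decompose $J_N$ as in (\ref{def_j}) into the endpoint entropy difference, the slope $\tfrac12\int_0^T|\dot\nu_{N,t}|^2\,dt$ and the upper-gradient term $\tfrac12\int_0^T g(\nu_{N,t})^2\,dt$. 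The entropy pieces are handled by Lemma~\ref{gamma_con_ent}, Lemma~\ref{conv_ent_init}, Proposition~\ref{technical_lgs} and Theorem~\ref{lct}, exactly as in Theorem~\ref{thm_ldp_rand}, and produce the first (static) term $I_0(\rho(0,\cdot))$ of the rate function. For the slope I would run the non-gradient analogue of the ``lower bound for the time-derivative'' lemma: apply Lemma~\ref{lem_h-1} to test functions $V_N(t,x)=\frac1N\sum_i J(t,i/N)x_i$ \emph{corrected by a microscopic germ}, namely insert, for a fixed $\epsilon>0$, the near-minimizer $g(\cdot,y)$ of $\hat a$ supplied by Proposition~\ref{prop_coeff_diff}(ii); then the quadratic correction $\frac1N\langle A\nabla V_N,\nabla V_N\rangle$ converges, by the averaging Proposition~\ref{prop_averaging}, to at most $\int_0^T\!\!\int_\T(\hat a(\rho)+\epsilon)(\partial_\theta J)^2\,d\theta\,dt$, and letting $\epsilon\to 0$ and optimizing over $J$ gives the desired slope lower bound. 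Symmetrically, the non-gradient version of the ``lower bound for the upper gradient'' lemma — comparing $g_{N,t}$ with a local Gibbs profile built from $\varphi'(\rho)$ and again corrected by the near-minimizing germ, via $\int\frac{\langle A\nabla(g_N/G_N),\nabla(g_N/G_N)\rangle}{g_N/G_N}\,G_N\,d\mu_N\ge 0$, Proposition~\ref{prop_averaging} and Fatou in time — yields $\liminf\frac1N\int_0^T g(\nu_{N,t})^2\,dt\ge\int_0^T\!\!\int_\T\hat a(\rho)(\partial_\theta\varphi'(\rho))^2\,d\theta\,dt$. Adding the three limits and completing the square in the test function reconstitutes $I(\rho)$, exactly as in the random-environment case (the conservation of total spin reconciling the various $\varphi(\int\rho)$ terms).

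\textbf{The recovery sequence, and the main obstacle.} By Remark~\ref{rq1} and density of smooth profiles it is enough to treat a smooth $\rho$ with $I(\rho)<\infty$; writing $\partial_t\rho=\partial_\theta\big(\hat a(\rho)(h+\partial_\theta\varphi'(\rho))\big)$ for a continuous $h$, the target value of $I$ at $\rho$ becomes $I_0(\rho(0,\cdot))+\tfrac14\int_0^T\!\!\int_\T\hat a(\rho)h^2\,d\theta\,dt$. I would take $\nu_N$ to be the flow of marginals of (\ref{sde_nongrad}) with generator perturbed both by a first-order drift encoding $h$ and by a germ perturbation built from the near-minimizer of Proposition~\ref{prop_coeff_diff}(ii) (with its accuracy parameter sent to $0$ along $N$), started from the local Gibbs state of $\rho(0,\cdot)$; that these marginals converge to $\rho$ and that $\frac1N J_N(\nu_N)\to\tfrac12\int_0^T\!\!\int_\T\hat a(\rho)h^2\,d\theta\,dt$ is precisely the ``upper bound'' computation the paper announces it will only sketch, and is carried out as in [Q, Section~3], while Lemma~\ref{conv_ent_init} supplies the convergence of the initial entropy term. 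The substantive difficulty — the reason this is the \emph{non-gradient} model — is concentrated in the slope and upper-gradient lower bounds: because $\hat a(\rho)$ is defined as an \emph{infimum} over germs, producing a lower bound that features it forces the near-minimizing germs of Proposition~\ref{prop_coeff_diff}(ii) into the test functions, and the real work is to control the resulting correction and error terms uniformly in $N$ through Proposition~\ref{prop_averaging} and the moment bounds of Lemma~\ref{lem_borne_lp}; everything else transcribes the random-environment argument.
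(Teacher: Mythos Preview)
Your overall architecture is right and matches the paper: reduce via Corollary~\ref{cor_ent}, reuse Lemmas~\ref{gamma_con_ent} and \ref{conv_ent_init} for the entropy pieces, treat the slope by Lemma~\ref{lem_h-1} with germ-corrected test functions and then optimize over the germ, and build the recovery sequence by the perturbed generator of [Q]. The slope lower bound and the recovery sequence are essentially what the paper does.

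The gap is in the upper-gradient lower bound. You propose to transport the random-environment argument, i.e.\ expand $\int\frac{\langle A\nabla(g_N/G_N),\nabla(g_N/G_N)\rangle}{g_N/G_N}G_N\,d\mu_N\ge 0$ with a local-Gibbs $G_N$ plus germ correction, and close via Proposition~\ref{prop_averaging}. This does not work in the non-gradient model. In the random-environment case the trick succeeds because the conductance $a_i$ is \emph{independent of $x$}: one can choose $\nabla\log G_N$ so that $a_i((\nabla\log G_N)_{i+1}-(\nabla\log G_N)_i)$ is a constant multiple of $J(i/N)$, and the cross term collapses to $\sum J(i/N)(\psi'(x_{i+1})-\psi'(x_i))$, handled by Proposition~\ref{conv_renforcees}. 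When $a=a(x_i,x_{i+1})$ depends on the configuration, no such cancellation is available: the cross term carries the microscopic current $a(x_i,x_{i+1})(\psi'(x_{i+1})-\psi'(x_i))$ (plus germ corrections), which is unbounded, and more importantly its one-block/two-block average under the grand-canonical product measure vanishes by symmetry, so Proposition~\ref{prop_averaging} yields $0$, not $\hat a(\rho)\,\partial_\theta\varphi'(\rho)$. This is exactly the non-gradient obstruction.

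The paper handles this differently: it bounds the upper gradient from below by $2\int\langle A(\nabla\nu+\nu\nabla H),\vec J\rangle-\int\langle A\vec J,\vec J\rangle\,d\nu$ for a germ-corrected vector field $\vec J$, and after integration by parts the linear term becomes $\sum J'(t,i/N)\big(W_{i,i+1}-\tfrac{1}{N^2}LF\big)$, whose limit is identified via the \emph{gradient replacement} Corollary~\ref{grad_replac}, itself a consequence of Varadhan's exponential estimate Theorem~\ref{grad_replac_var} and an entropy bound $H(P_N,R_N)=O(N)$ (which in turn uses the assumption $J_N(\nu_N)\le CN$). So the missing ingredient in your sketch is precisely this gradient replacement step; Proposition~\ref{prop_averaging} alone is not enough for the Fisher-information term. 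Replace ``Proposition~\ref{prop_averaging}'' by ``Corollary~\ref{grad_replac}'' in that part of the argument (and note that you then need the a priori bound $J_N\le CN$ to invoke it) and your outline becomes the paper's proof.
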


In the rate function, the norm is defined as
\begin{equation}
||u||_{H^{-1}(\hat{a}(\rho(t,\cdot)))}^2 := \underset{v \in H^1(\T)}{\sup} \hspace{1mm} 2\int_{\T}{u(\theta)v(\theta)d\theta} - \int{\hat{a}(\rho(t,\theta))\left(\frac{\partial v}{\partial \theta}\right)^2d\theta}.
\end{equation}

This result was already proved in [Q], under the assumption that the single site potential $\psi$ is uniformly convex, and that its second derivative is bounded above. Our assumptions are a priori more general, since they allow for superquadratic potentials, but it seems likely that, using the logarithmic Sobolev inequality proved in [MO], the method of [Q] could be extended for such functions.

The following result is the key technical estimate to prove large deviations for nongradient models. It is has been proven in [Va].

Let $R_N$ be the law of the stationary solution to the SDE (\ref{sde_nongrad}), with the initial condition $X_0$ having law $\mu_N$. Let $\rho(t,\theta)$ be a deterministic profile, which we assume to be in $L^{\infty}(H^1)$. 

Denote by $\rho_{\theta}^c(t) := \int_{\theta - c}^{\theta + c}{\rho(t,s)ds}$. For a given smooth function $J : [0,T] \times \T \rightarrow \R$ and a function $g$ as in part (ii) of Proposition \ref{prop_coeff_diff}, we define
\begin{align}
V(t):= & \underset{i = 1}{\stackrel{N}{\sum}} \hspace{1mm} J(t, i/N)\left[W_{i,i+1} - \frac{1}{N^2}Lg\left(X^{i-k}_t,..,X^{i+k}_t, \rho_{i/N}^{\ell/N}(t)\right)\right] \notag \\
&+ \frac{1}{N}\underset{i = 1}{\stackrel{N}{\sum}} \hspace{1mm} J(t, i/N)\hat{a}(\rho_{i/N}^{\epsilon_1}(t))\left(\frac{\varphi'(\rho_{i/N + \epsilon_2}^{\epsilon_1}(t)) - \varphi'(\rho_{i/N - \epsilon_2}^{\epsilon_1}(t))}{2\epsilon_2}\right) \notag \\
&-\frac{\alpha}{N}\underset{i = 1}{\stackrel{N}{\sum}} \hspace{1mm} J(t, i/N)^2(a_{g(\cdot, \rho_{i/N}^{\epsilon_1}(t))}(\rho_{i/N}^{\epsilon_1}(t)) - \hat{a}(\rho_{i/N}^{\epsilon_1}(t))
\end{align}
where $\alpha$, $\epsilon_1$ and $\epsilon_2$ are positive numbers, and $\ell$ is a positive integer.

Under these notations, we have the following exponential estimate, which is due to Varadhan [Va] : 

\begin{thm} \label{grad_replac_var}
For any profile $\rho$, any $\alpha > 0$, any $J$ and $g$, we have
$$\underset{\epsilon_2 \rightarrow 0}{\lim} \hspace{1mm} \underset{\epsilon_1 \rightarrow 0}{\limsup} \hspace{1mm} \underset{\ell \rightarrow \infty}{\limsup} \hspace{1mm} \underset{N \rightarrow \infty}{\limsup} \hspace{1mm} \frac{1}{N}\log \mathbb{E}^{R_N}\left[\exp\left(\alpha N\int_0^T{V(t)dt}\right)\right] \leq 0.$$

\end{thm}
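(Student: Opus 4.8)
The plan is to follow Varadhan's non-gradient method [Va] (with the one/two-block scheme of [GPV] and the precise formulation of [Q]), keeping $\alpha$, $J$, $g$, $\rho$ fixed throughout. \textbf{Step 1 (Feynman--Kac and a static variational bound).} Since $R_N$ is the law of the reversible process with generator $L$ started from its invariant measure $\mu_N$, and $L$ is self-adjoint and nonpositive on $L^2(\mu_N)$, the Feynman--Kac formula together with the Rayleigh--Ritz characterization of the top of the spectrum of $L+\alpha N\,V(t,\cdot)$ (and a Trotter approximation in time to handle the time dependence of $V$) gives
$$\frac{1}{N}\log \mathbb{E}^{R_N}\!\left[\exp\!\left(\alpha N\!\int_0^T\! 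V(t)\,dt\right)\right] \le \int_0^T \sup_{f}\left\{ \alpha \int V(t)\, f\, d\mu_N - \frac{1}{N}D_N\!\big(\sqrt{f}\,\big)\right\}dt ,$$
where the supremum runs over probability densities $f$ with respect to $\mu_N$ and $D_N(h):=N^2\sum_i\int a(x_i,x_{i+1})\big(\partial_{x_{i+1}}h-\partial_{x_i}h\big)^2\,d\mu_N$ is the (positive) Dirichlet form of the dynamics. It therefore suffices to show that the supremum inside the time integral tends to $0$ as $N\to\infty$, then $\ell\to\infty$, then $\epsilon_1\to0$, then $\epsilon_2\to0$, uniformly in $t$.

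\textbf{Step 2 (truncation of the unbounded fields).} The current $W_{i,i+1}$ and $Lg$ involve $\psi'$, which is unbounded; using the logarithmic Sobolev inequality of [MO], the entropy estimate $\Ent_{\mu_N}(f)\le C\,D_N(\sqrt f)$, and the uniform $L^p$-moment bound of Lemma \ref{lem_borne_lp}, I would cut $\psi'$ off at a level $\ell_0$; the discarded tails contribute an error that is small uniformly in $N$ and vanishes as $\ell_0\to\infty$ at the very end, exactly as in Proposition \ref{conv_renforcees}. After this reduction every field entering $V(t)$ is a bounded local function, so $\int V(t) f\,d\mu_N$ is finite for all densities $f$.

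\textbf{Step 3 (one-block, two-block, and the gradient replacement).} With $V(t)$ now built from bounded local functions, the remaining task is to show $\limsup_N \sup_f\{\alpha\int V(t)\,f\,d\mu_N - \tfrac1N D_N(\sqrt f)\}\le 0$ after the iterated limits in $\ell,\epsilon_1,\epsilon_2$. This is the heart of [Va]: (i) by the \emph{one-block estimate}, replace the local function $W_{i,i+1}-\tfrac1{N^2}Lg(X^{i-k}_t,\dots,X^{i+k}_t,\rho^{\ell/N}_{i/N})$, averaged over a window of $\ell$ sites, by its average against the canonical Gibbs measure conditioned on the empirical mean of that window, at a cost absorbed by $\tfrac1N D_N(\sqrt f)$; (ii) by the \emph{two-block estimate} together with Proposition \ref{prop_averaging}, replace that empirical mean first by a mesoscopic average and then by the macroscopic value $\rho^{\epsilon_1}_{i/N}$, which is where the limits $\ell\to\infty$, $\epsilon_1\to0$ enter; (iii) the resulting equilibrium current, modulo discrete gradients and $L$-images of bounded local functions, equals a multiple of the discrete gradient of $\varphi'(\rho)$, with the sharp coefficient given by the variational constant $\hat a=\inf_F a_F$ of Proposition \ref{prop_coeff_diff} — summing this gradient part by parts against $J$ on $\T$ produces exactly the deterministic term in the second line of $V(t)$ (with opposite sign), so these cancel up to an error controlled as $\epsilon_2\to0$; and (iv) the quadratic remainder left after completing the square against $\tfrac1N D_N(\sqrt f)$ is estimated, via the uniform spectral gap of the single-block dynamics (equivalently the central-limit variance bound), by $\alpha\,J^2\big(a_{g(\cdot,\rho^{\epsilon_1})}(\rho^{\epsilon_1})-\hat a(\rho^{\epsilon_1})\big)$, precisely the third term subtracted in the definition of $V(t)$. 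Collecting (i)--(iv) makes the supremum $\le o_N(1)+o_\ell(1)+o_{\epsilon_1}(1)+o_{\epsilon_2}(1)$, the iterated limits give the claim, and finally $\ell_0\to\infty$ removes the truncation.

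\textbf{Main obstacle.} Steps 1 and 2 are routine given the log-Sobolev inequality of [MO] and the moment bounds already in hand. The genuine difficulty — essentially the whole of [Va] — is Step 3(iii)--(iv): proving that every closed local current is, modulo gradients and $L$-images of local functions, the $\varphi'$-gradient with precisely the constant $\hat a(\rho)=\inf_F a_F(\rho)$, and that the orthogonal part is dominated by the Dirichlet form with the matching constant. This rests on the variational description of $\hat a$ in Proposition \ref{prop_coeff_diff}, a spectral gap for the block dynamics uniform in the block size and the mean spin, and a delicate decomposition of the current's variance; it is irreducibly a non-gradient phenomenon with no shortcut.
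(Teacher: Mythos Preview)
Your sketch is a faithful outline of Varadhan's non-gradient argument, and there is no mathematical gap in the approach you describe. However, you should be aware that the paper does not actually prove this theorem: it is stated as a black box with the attribution ``It has been proven in [Va]'', and is then used as input for Corollary~\ref{grad_replac}. So there is nothing to compare against --- the paper's ``proof'' is a citation, and your proposal is a (correct) summary of what that citation contains.

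One minor comment on your write-up: in Step~3(ii) you invoke Proposition~\ref{prop_averaging}, but that proposition is stated for densities $f_N$ with a fixed Dirichlet bound and a deterministic limiting profile, whereas in the variational formula of Step~1 the supremum is over \emph{all} densities. The two-block estimate in [Va] does not use Proposition~\ref{prop_averaging} directly; rather, it shows that any density contributing nontrivially to the supremum must have Dirichlet form $O(N)$, and then compares two distant microscopic blocks by a moving-particle argument that again consumes part of $\tfrac{1}{N}D_N(\sqrt f)$. This is a small inaccuracy in attribution rather than a gap in the strategy.
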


Using the result, we obtain what is known in the hydrodynamic literature as the gradient replacement estimate.

\begin{cor} \label{grad_replac}
Let $(\nu_{N,t})$ be a sequence of flows of time-marginals of a a sequence of processes  that weakly converge to a deterministic flow $\rho(t,\theta)$. Assume moreover that $J_N(\nu_{N,t}) \leq CN$ for some $C > 0$. 

Then for any smooth, bounded functions $J: [0,T]\times \T \rightarrow \R$ and $g : \R^{2k+2} \rightarrow \R$ , we have
$$\underset{\ell \rightarrow \infty}{\lim} \hspace{1mm} \underset{N}{\lim} \hspace{1mm} \int_0^T{\int{\sum J(t,i/N)(W_{i, i+1}(x) - \frac{1}{N^2}(Lg)(x_{i-k},..,x_{i+k}, \rho_{i/N}^{\ell/N}))\nu_{N,t}(dx)}dt} $$
$$=  \int_0^T{\int_{\T}{J(t,\theta)\hat{a}(\rho(t,\theta))\partial_{\theta}\varphi'(\rho(t,\theta))d\theta}dt}.$$
\end{cor}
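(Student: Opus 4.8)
The plan is to transfer the stationary exponential estimate of Theorem~\ref{grad_replac_var} to the non-stationary time-marginals $(\nu_{N,t})$ by means of the entropy inequality, in the spirit of the Guo--Papanicolaou--Varadhan scheme. Write $\mathcal{A}_N(\ell)$ for the quantity inside the iterated limit on the left-hand side of the Corollary. Since only the marginals enter the statement, I would first replace the law $P_N$ of the $N$-th process by the Markov law $\tilde P_N$ furnished by Theorem~\ref{min_ent}: it has the same marginals, and, using the hypothesis $J_N(\nu_{N,t})\le CN$ together with the fact that the local Gibbs initial datum (\ref{init_GL}) satisfies $\Ent_{\mu_N}(f_{0,N})\le CN$, it obeys $H(\tilde P_N;R_N)=\Ent_{\mu_N}(f_{0,N})+\tfrac12 J_N((\nu_{N,t}))\le C N$. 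Fix $\alpha>0$, parameters $\epsilon_1,\epsilon_2>0$, an integer $\ell$, and a corrector $g$ as in Proposition~\ref{prop_coeff_diff}(ii). Because the integrand $V(t)$ depends only on the configuration at time $t$ and on the deterministic profile $\rho$, one has $\E^{\tilde P_N}\big[\int_0^T V(t)\,dt\big]=\int_0^T\int V(t)\,d\nu_{N,t}\,dt$; applying the entropy inequality $\E^{P}[\Phi]\le\gamma^{-1}\big(H(P;R)+\log\E^{R}[e^{\gamma\Phi}]\big)$ with $\gamma=\alpha N$, $\Phi=\int_0^T V(t)\,dt$, and then Theorem~\ref{grad_replac_var} to kill the exponential term, I obtain
$$\limsup_{\epsilon_2\to0}\ \limsup_{\epsilon_1\to0}\ \limsup_{\ell\to\infty}\ \limsup_{N\to\infty}\ \int_0^T\int V(t)\,d\nu_{N,t}\,dt\ \le\ \frac{C}{\alpha}.$$

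Next I would read off the three contributions to this bound coming from the three blocks defining $V(t)$. The first block, integrated against $\nu_{N,t}$, is exactly $\mathcal{A}_N(\ell)$. The other two blocks are deterministic functions of $\rho$ only, hence pass to the limit as $N\to\infty$ as Riemann sums, and then, taking $\ell,\epsilon_1,\epsilon_2\to0$ in the prescribed order, converge respectively to $\int_0^T\int_{\T}J\,\hat a(\rho)\,\partial_\theta\varphi'(\rho)\,d\theta\,dt$ and to $-\alpha\int_0^T\int_{\T}J^2\big(a_{g}(\rho)-\hat a(\rho)\big)\,d\theta\,dt$; here I use that $\varphi'(\rho)\in L^\infty(H^1)$ by Lemma~\ref{limites_dans_h1} (so the symmetric difference quotients of $\varphi'(\rho_\cdot^{\epsilon_1})$ converge to $\partial_\theta\varphi'(\rho)$ in $L^2$), the continuity of $\hat a$ from Proposition~\ref{prop_coeff_diff}(i), and that $a_g(\rho)-\hat a(\rho)\ge0$. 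Since $\rho\in L^\infty(H^1)\hookrightarrow L^\infty$ takes values in a fixed compact set, Proposition~\ref{prop_coeff_diff}(ii) lets me choose, for each $\eta>0$, a corrector $g$ with $a_{g}(\rho)-\hat a(\rho)\le\eta$ along the profile. Combining, with the signs arranged as in Theorem~\ref{grad_replac_var}, I get a one-sided bound of the form
$$\limsup_{\ell\to\infty}\ \limsup_{N\to\infty}\ \Big(\mathcal{A}_N(\ell)-\int_0^T\int_{\T}J\,\hat a(\rho)\,\partial_\theta\varphi'(\rho)\,d\theta\,dt\Big)\ \le\ \frac{C}{\alpha}+\alpha\,\eta\,\|J\|_{L^2([0,T]\times\T)}^2,$$
and letting $\eta\to0$ with $\alpha$ fixed, then $\alpha\to+\infty$, shows that this $\limsup$ is $\le0$.

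Finally I would run the identical argument with $J$ replaced by $-J$ in the definition of $V$ (Theorem~\ref{grad_replac_var} holds for any test function): the first two blocks change sign, the penalising third block is unchanged up to reselecting $g$, and the same manipulations yield $\liminf_{\ell}\liminf_{N}\big(\mathcal{A}_N(\ell)-\int_0^T\int_{\T}J\,\hat a(\rho)\,\partial_\theta\varphi'(\rho)\big)\ge0$. The two one-sided bounds together force the iterated limit to exist and to equal $\int_0^T\int_{\T}J\,\hat a(\rho)\,\partial_\theta\varphi'(\rho)\,d\theta\,dt$, as claimed; for fixed $\ell$ the inner limit in $N$ exists by the local-equilibrium passages (Proposition~\ref{prop_averaging} applied to the corrector term and to $W_{i,i+1}$), so that $\lim_\ell\lim_N$ is legitimate.

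I expect the main obstacle to be organizational rather than analytic: the genuinely hard input — the two-blocks estimate and Varadhan's variational characterization of $\hat a$ — is already packaged in Theorem~\ref{grad_replac_var} and Proposition~\ref{prop_coeff_diff}, which I am free to quote. What requires care is sequencing the nested limits ($N\to\infty$, then $\ell\to\infty$, then $\epsilon_1,\epsilon_2\to0$, then the choice of $g$ via $\eta\to0$, then $\alpha\to+\infty$) so that they are compatible with the order in Theorem~\ref{grad_replac_var}, and checking that each auxiliary error (the cell-average discretization in $\epsilon_1$, the difference quotient in $\epsilon_2$, the corrector approximation in $g$, the entropy cost $C/\alpha$) genuinely vanishes; one must also invoke the second-moment bounds of Lemma~\ref{lem_borne_lp} wherever the profile is unbounded to license the Riemann-sum and weak-convergence passages.
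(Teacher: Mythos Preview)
Your overall strategy --- build a process with the given marginals and entropy $O(N)$ against the stationary law $R_N$, apply the entropy inequality with $\gamma=\alpha N$, invoke Theorem~\ref{grad_replac_var}, then repeat with $-J$ --- is exactly the paper's route, and your unfolding of $V(t)$ into its three blocks (with the optimisation over the corrector $g$ and then $\alpha\to\infty$) is in fact more explicit than the paper's own one-line conclusion.

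There is, however, one genuine gap in your entropy bound. You write
\[
H(\tilde P_N;R_N)=\Ent_{\mu_N}(f_{0,N})+\tfrac12 J_N((\nu_{N,t})),
\]
but Theorem~\ref{min_ent} applied with reference process $R_N$ (whose initial law is $\mu_N$) gives $H(\tilde P_N;R_N)=\Ent_{\mu_N}(\nu_{N,0})+\tfrac12 J((\nu_{N,t}))$, where $\nu_{N,0}$ is the initial marginal of the \emph{perturbed} flow. The corollary does not assume $\nu_{N,0}=f_{0,N}\mu_N$, so you may not replace $\nu_{N,0}$ by $f_{0,N}$. The hypothesis $J_N\le CN$ only controls $\Ent_{f_{0,N}\mu_N}(\nu_{N,0})$; to get $H(\tilde P_N;R_N)=O(N)$ you must still bound
\[
\Ent_{\mu_N}(\nu_{N,0})-\Ent_{f_{0,N}\mu_N}(\nu_{N,0})=\int\log f_{0,N}\,d\nu_{N,0}
=\int\sum_i\varphi'(m_0(i/N))x_i\,\nu_{N,0}(dx)-\log Z_N.
\]
The paper handles this explicitly: Cauchy--Schwarz together with the uniform second-moment bound of Lemma~\ref{lem_borne_l2} (available because $f_{0,N}\mu_N$ satisfies a LSI with uniform constant and $\Ent_{f_{0,N}\mu_N}(\nu_{N,0})\le CN$) shows this quantity is $O(N)$. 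It is a short step, but it is not automatic and is precisely where the local-Gibbs form of the initial datum enters; once you insert it your argument is complete and matches the paper's.
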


\begin{proof}
First, let us show that we can build a diffusion process of law $P$ with time-marginals $(\nu_{N,t})$ such that $H(P_N, R_N) = O(N)$. Since the equilibrium process is a solution of (\ref{sde_nongrad}) with initial condition $\mu_N$, we know that there exists a process with marginals $(\nu_{N,t})$ such that 
\begin{align}
H(P_N, R_N) &= \frac{1}{2}\Ent_{\mu_N}(\nu_{0,N} + \frac{1}{2}\Ent_{\mu_N}(\nu_{N,T}) + \frac{1}{4}\int_0^T{\int{\frac{\langle A(\nabla \nu_t + \nu_t\nabla H), (\nabla \nu_t + \nu_t\nabla H) \rangle}{\nu_t}}dt} \notag \\
\hspace{1cm}  &+ \frac{1}{4}\int_0^T{|\dot{\nu}_t|^2dt} \notag \\
&= J_N(\nu_{N,t}) + \Ent_{\mu_N}(\nu_{N,0}) - \Ent_{f_{0,N}\mu_N}(\nu_{N,0}) \notag 
\end{align}
so we only have to get an upper bound on $\Ent_{\mu_N}(\nu_{N,0}) - \Ent_{f_{0,N}\mu_N}(\nu_{N,0})$. Moreover, the bound on $J_N(\nu_{N,t})$ implies that $\Ent_{f_{0,N}\mu_N}(\nu_{N,0}) \leq CN$.

Denoting $\nu_{N,0} = \rho_N\mu_N$, we have
\begin{align}
\Ent_{\mu_N}(\nu_{N,0}) &- \Ent_{f_{0,N}\mu_N}(\nu_{N,0}) = \int{\rho \log f_{0,N} d\mu_N} \notag \\
&= \int{\sum \varphi'(m_0(i/N))x_i \nu_{N,0}(dx)} - \log Z_N \notag \\
&\leq CN \times \sqrt{\frac{1}{N}\int{\sum |x_i|^2\nu_{N,0}(dx)} } + CN \notag \\
\leq CN
\end{align}
where the last bound follows from $\Ent_{f_{0,N}\mu_N}(\nu_{N,0}) \leq CN$, Lemma \ref{lem_borne_l2} and the fact that the measures $f_{0,N}\mu_N$ satisfy a LSI with uniform constant, and have uniformly bounded second moments.

Consequently, using the entropy inequality, we have
\begin{align}
\mathbb{E}^{P_N}\left[\int_0^T{V(t)dt}\right] \leq \frac{C}{\alpha} + \frac{1}{\alpha N}\log \mathbb{E}^{R_N}\left[\exp\left(\alpha N\int_0^T{V(t)dt}\right)\right]
\end{align}
and the result immediately follows from Theorem \ref{grad_replac_var} and using the fact that the inequality is valid for both $J$ and $-J$.
\end{proof}

We also recall the exponential tightness estimates that have been proven in [Va], and which we need to apply Corollary \ref{cor_ent}: 

\begin{lem}
Let $\mathbb{P}_N$ be the law of a solution to the SDE \ref{sde_nongrad} with initial condition $X_0$ having a distribution $f_0\mu_N$ that satisfies $\Ent_{\mu_N}(f_0) \leq CN$. Then
$$\underset{\ell \longrightarrow +\infty}{\lim} \hspace{1mm} \underset{N \longrightarrow +\infty}{\lim} \hspace{1mm} \frac{1}{N}\log \mathbb{P}_N\left(\underset{0 \leq t \leq T}{\sup} \hspace{1mm} \frac{1}{N} \sum |X^i_t| \geq \ell \right) = -\infty.$$
and for any $\epsilon > 0$ and any smooth function on the torus $J$, we have
$$\underset{\delta \longrightarrow 0}{\lim} \hspace{1mm} \underset{N \longrightarrow +\infty}{\lim} \hspace{1mm} \frac{1}{N}\log \mathbb{P}_N\left( \underset{0 \leq s \leq t \leq T, |s - t| \leq \delta}{\sup} \hspace{2mm} \left|\frac{1}{N}\sum J(i/N)(X^i_t - X^i_s)\right| \geq \epsilon \right) = -\infty.$$
\end{lem}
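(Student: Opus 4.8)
\emph{Proof proposal.}
Both estimates are the counterparts, for the model (\ref{sde_nongrad}), of Lemmas \ref{lem_tightness1} and \ref{lem_tightness2}, and are proved in [Va] (as in [DV] for constant conductances). I would follow that scheme, the only genuinely new feature being that the diffusion coefficient now depends on the configuration; this is harmless because $a$ is bounded away from $0$ and $\infty$ (so $\tfrac1c A_0\le A(x)\le c A_0$ for all $x$) and has bounded first derivatives. Throughout, let $R_N$ denote the law of the stationary solution of (\ref{sde_nongrad}) started from $\mu_N$; since $\mathbb{P}_N$ and $R_N$ have the same generator, $\frac{d\mathbb{P}_N}{dR_N}=f_{0,N}(X_0)$ and $H(\mathbb{P}_N;R_N)=\Ent_{\mu_N}(f_{0,N})\le CN$.

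\emph{The $L^1$ estimate.}
For the stationary process I would use the maximal inequality [KV, Lemma 1.12]: for symmetric $g\ge 0$ on $\R^N$,
$$R_N\Big(\sup_{0\le t\le T}g(X_t)\ge r\Big)\le\frac{3}{r}\Big(\int g^2\,d\mu_N+T\int\tfrac{\langle A\nabla g,\nabla g\rangle}{g}\,d\mu_N\Big)^{1/2},$$
applied with $g(x)=\exp\big(\sum_i(1+x_i^2)^{1/2}\big)$. Since $\psi$ grows superlinearly, $\int g^2\,d\mu_N\le C^N$; since $A(x)\le c A_0$ and $|\partial_i g|\le g$, the Dirichlet term is $\le cN^3\int g\,d\mu_N\le C'N^3$. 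Taking $r=e^{N\ell}$ and using Chebyshev yields $\tfrac1N\log R_N\big(\sup_t\tfrac1N\sum_i|X^i_t|\ge\ell\big)\le-\ell+C_0+o(1)$. To pass to $\mathbb{P}_N$, set $A_\ell=\{\sup_t\tfrac1N\sum_i|X^i_t|\ge\ell\}$ and split on the size of the density at time $0$:
$$\mathbb{P}_N(A_\ell)=\mathbb{E}^{R_N}\!\big[f_{0,N}(X_0)\mathbf{1}_{A_\ell}\big]\le e^{KN}R_N(A_\ell)+\big(f_{0,N}\mu_N\big)\big(\{f_{0,N}>e^{KN}\}\big).$$
The first term has $\tfrac1N\log$ at most $K-\ell+C_0+o(1)$. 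For the second, (\ref{init_GL}) gives $\log f_{0,N}(x)=\sum_i\varphi'(\rho_0(i/N))x_i-\log Z_N$ with $\varphi'(\rho_0(\cdot))$ bounded and $\tfrac1N\log Z_N$ convergent (Theorem \ref{lct}), hence $\{f_{0,N}>e^{KN}\}\subset\{\tfrac1N\sum_i|x_i|\ge\ell'_K\}$ with $\ell'_K\to\infty$ as $K\to\infty$; and since $f_{0,N}\mu_N$ satisfies a logarithmic Sobolev inequality with a constant uniform in $N$ (Proposition \ref{technical_lgs}(ii)) and has uniformly bounded mean, Gaussian concentration makes this event superexponentially unlikely. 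Choosing $K=\ell/2$ and letting first $N\to\infty$ and then $\ell\to\infty$ gives the first limit.

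\emph{The modulus of continuity.}
Fix a smooth $J$ and set $Y_t=\tfrac1N\sum_i J(i/N)X^i_t$. Itô's formula for (\ref{sde_nongrad}) writes $Y_t-Y_s=\int_s^t b_N(X_u)\,du+(M_t-M_s)$, where $M$ is a martingale with $d\langle M\rangle_u\le c\|J'\|_\infty^2 N^{-1}\,du$ (only $a\le c$ enters here) and, after a discrete summation by parts, $b_N(x)$ is a linear combination of the microscopic currents $W_{i,i+1}(x)$ with slowly varying coefficients of order one, plus bounded lower-order terms. For the martingale part, Doob's inequality and the exponential supermartingale $\exp(\theta M-\tfrac{\theta^2}{2}\langle M\rangle)$, used on a partition of $[0,T]$ into $O(1/\delta)$ intervals, give $\tfrac1N\log\mathbb{P}_N\big(\sup_{|s-t|\le\delta}|M_t-M_s|\ge\epsilon/2\big)\le-c\epsilon^2/\delta+o(1)$. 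For the time integral one argues as in [DV, Lemma 2.8] (equivalently [Va]): exponential moments of $\int_s^t b_N(X_u)\,du$ are estimated through the entropy inequality against $R_N$ and a Feynman--Kac bound for the stationary process, using the uniform logarithmic Sobolev inequality and the $L^p$-moment bounds of Lemma \ref{lem_borne_lp} (which dominate $\tfrac1N\int\sum_i|\psi'(x_i)|\,d\nu$). As $a$ and $\partial a$ are bounded, every step is identical to the constant-conductance case, and one gets $\tfrac1N\log\mathbb{P}_N\big(\sup_{|s-t|\le\delta}|\int_s^t b_N(X_u)\,du|\ge\epsilon/2\big)\to-\infty$ as $\delta\to 0$. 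A union bound over the two contributions yields the second limit.

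\emph{Main obstacle.}
Two points need care. First, the bare entropy inequality only gives $\limsup_N\tfrac1N\log\mathbb{P}_N(A_\ell)\le 0$, so the superexponential decay required for exponential tightness is recovered only by using the explicit local-Gibbs form of $f_{0,N}$ together with the \emph{dimension-free} logarithmic Sobolev inequality it satisfies, which forces the initial configuration to concentrate. Second, the current $W_{i,i+1}$ is of order $|x|^{p-1}$ and the microscopic average $\sum_i W_{i,i+1}$ is of order $\sqrt N$ (or worse), so $\int_s^t b_N(X_u)\,du$ cannot be bounded pathwise by $\delta\sup_u|b_N(X_u)|$ without losing powers of $N$; the saving cancellation comes from the fluctuation--dissipation structure of the currents and is the genuinely hard input, imported from [DV]/[Va]. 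By contrast, the configuration dependence of the conductances costs nothing: the bounds $1/c\le a\le c$ and $|\partial a|\le C$ make the Dirichlet-form comparisons uniform in $x$, the equilibrium maximal inequalities go through verbatim, and the derivative terms of $a$ contribute only bounded lower-order quantities.
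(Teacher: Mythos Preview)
Your proposal is correct and follows the same route as the paper, which for this lemma gives no argument at all beyond the remark that ``the proof of these estimates is exactly the same as for the gradient case studied in [DV]'' and that the configuration-dependent $a(x_i,x_{i+1})$ is harmless as long as it is bounded. Your sketch of the [DV]/[Va] scheme (Kipnis--Varadhan maximal inequality at equilibrium, then transfer to $\mathbb{P}_N$; It\^o decomposition plus exponential martingale/Feynman--Kac for the modulus of continuity) is exactly what those references do.

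One point where you are in fact more careful than the paper: in the paper's own sketch of the analogous random-conductance Lemma~\ref{lem_tightness1}, the passage from $R_N$ to $\mathbb{P}_N$ is done via the inequality $\mathbb{P}_N(A)\le(\log 2+H_N)/\log(1+1/R_N(A))$ with $H_N\le CN$. As you rightly flag, this only yields $\mathbb{P}_N(A_\ell)\le C/\ell$ and hence $\limsup_N \tfrac1N\log\mathbb{P}_N(A_\ell)\le 0$, not $-\infty$. Your repair---splitting on $\{f_{0,N}\le e^{KN}\}$, using the explicit local-Gibbs form (\ref{init_GL}) and the uniform LSI of Proposition~\ref{technical_lgs}(ii) to make $\{f_{0,N}>e^{KN}\}$ superexponentially unlikely, then choosing $K$ comparable to $\ell$---is the correct way to recover genuine exponential tightness, and it matches how [DV] actually argues.
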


The proof of these estimates is exactly the same as for the gradient case studied in [DV]. Once more, the use of a variable function $a(x_i, x_{i+1})$ does not make a difference as long as it is bounded.

Lemmas \ref{gamma_con_ent} and \ref{conv_ent_init} remain valid, so that, to prove Theorem \ref{thm_ld_nongradient}, we only have to study the behavior of the slopes.

\begin{lem}
$$\underset{N}{\liminf} \hspace{1mm} \frac{1}{N} \int_0^T{|\dot{\nu}_t|^2dt} \geq \int_0^T{||\partial_t \rho||_{H^{-1}(\hat{a})}^2dt}$$
\end{lem}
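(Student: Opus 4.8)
The plan is to bound the metric derivative from below by inserting a suitable family of test functions into Lemma~\ref{lem_h-1}, in the spirit of the slope lemma for the random-environment model, but now with a \emph{corrector} built from the function $g$ of Proposition~\ref{prop_coeff_diff}(ii) to absorb the non-gradient part of the dynamics. It suffices to treat a subsequence along which $\sup_N \tfrac1N J_N(f_N) < +\infty$ (otherwise the $\Gamma$-$\liminf$ inequality is trivial) and along which $\nu_{N,t}=f_{N,t}\mu_N$ weakly converges to $\rho$; by ellipticity of $A(x)$ this bound yields the time-integrated energy estimate $\int_0^T\tfrac1N\int \frac{\langle A_0\nabla f_{N,t},\nabla f_{N,t}\rangle}{f_{N,t}}\,d\mu_N\,dt\le C$, so the averaging Proposition~\ref{prop_averaging} and the second-moment bounds of Lemma~\ref{lem_borne_l2} (the latter uniformly in $t$, via the logarithmic Sobolev inequality and $\Ent_{\mu_N}(\nu_{N,t})\le CN$) are available.

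Fix a smooth $J:[0,T]\times\T\to\R$, a threshold $C<\infty$ and $\epsilon>0$, let $g=g(x_{-k},\dots,x_k,y)$ be the corrector of Proposition~\ref{prop_coeff_diff}(ii) for these $C,\epsilon$, and, with $\eta_{\epsilon_1}$ a smooth mollifier, set $\rho^{\epsilon_1}_{i/N}(t):=(\rho(t,\cdot)*\eta_{\epsilon_1})(i/N)$. I would then apply Lemma~\ref{lem_h-1} to
\[
V_N(t,x):=\sum_{i=1}^N J(t,i/N)\,x_i-\frac1N\sum_{i=1}^N \partial_\theta J(t,i/N)\,g\big(x_{i-k},\dots,x_{i+k},\rho^{\epsilon_1}_{i/N}(t)\big),
\]
with periodic indices. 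The key algebraic fact is that the discrete gradient of $V_N$ along $e_{i+1}-e_i$ equals $\tfrac1N\,\partial_\theta J(t,i/N)\,\big(1-\partial_{x_1}\xi_g+\partial_{x_0}\xi_g\big)(\tau_i x)$ up to an $O(N^{-2})$ local error, where $\xi_g=\sum_j g(\tau_j x,\cdot)$ is the (divergent but term-wise differentiable) formal sum. Since $\langle A(x)u,u\rangle=N^2\sum_i a(x_i,x_{i+1})(u_{i+1}-u_i)^2$, the Dirichlet term of Lemma~\ref{lem_h-1} becomes $\tfrac1N\sum_i(\partial_\theta J(t,i/N))^2\,h_g(\tau_i x;\rho^{\epsilon_1}_{i/N}(t))$ up to negligible terms, where $h_g(\,\cdot\,;y):=a(x_0,x_1)\big(1-\partial_{x_1}\xi_g+\partial_{x_0}\xi_g\big)^2$ is a \emph{bounded} local function (boundedness uses that $g$ has bounded first derivatives and $a$ is bounded). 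The zeroth-order terms of Lemma~\ref{lem_h-1}, divided by $N$, reduce to $\tfrac1N\sum_i(\cdots)x_i$ plus corrections $O\!\big(\tfrac1{N^2}\sum_i|x_i|\big)$ that vanish by the second-moment bound, and by weak convergence they tend to $2\int_\T J(T,\theta)\rho(T,\theta)\,d\theta-2\int_\T J(0,\theta)\rho(0,\theta)\,d\theta-2\int_0^T\!\int_\T \partial_t J\,\rho=2\int_0^T\langle\partial_t\rho_t,J(t,\cdot)\rangle\,dt$ (the last equality being the definition of $\partial_t\rho\in L^2(H^{-1})$). For the Dirichlet term, Proposition~\ref{prop_averaging} applied to $h_g$ — first $N\to\infty$, then $\epsilon_1\to0$ — gives the limit $\int_0^T\!\int_\T a_{g(\cdot,\rho(t,\theta))}(\rho(t,\theta))\,(\partial_\theta J(t,\theta))^2\,d\theta\,dt$, since $\int h_g(\,\cdot\,;y)\,d\mu^{\infty,y}=a_{g(\cdot,y)}(y)$ by definition.

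Inserting these into Lemma~\ref{lem_h-1} and taking $\liminf_N$ gives
\[
\liminf_N\frac1N\int_0^T|\dot\nu_{N,t}|^2\,dt\ \ge\ 2\int_0^T\langle\partial_t\rho_t,J_t\rangle\,dt-\int_0^T\!\int_\T a_{g(\cdot,\rho)}(\rho)\,(\partial_\theta J)^2\,d\theta\,dt .
\]
By Proposition~\ref{prop_coeff_diff}(ii), $a_{g(\cdot,y)}(y)\le\hat a(y)+\epsilon$ for $|y|\le C$ and $a_{g(\cdot,y)}(y)\le\hat a(y)+\|a\|_\infty$ for all $y$; since $\rho$ is finite a.e. and $\partial_\theta J$ is bounded, dominated convergence lets me send $C\to\infty$ and then $\epsilon\to0$, replacing $a_{g(\cdot,\rho)}(\rho)$ by $\hat a(\rho)$. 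Taking finally the supremum over all smooth $J$ and using the identity
\[
\sup_J\Big(2\int_0^T\langle\partial_t\rho_t,J_t\rangle\,dt-\int_0^T\!\int_\T \hat a(\rho(t,\theta))(\partial_\theta J_t)^2\,d\theta\,dt\Big)=\int_0^T\|\partial_t\rho_t\|^2_{H^{-1}(\hat a(\rho(t,\cdot)))}\,dt,
\]
which holds by a standard measurable-selection argument (choose a near-optimal $v_t$ pointwise in $t$ and regularise it in $t$ to a smooth $J$), yields the claim.

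I expect the main obstacle to be the construction of the corrector $V_N$ and the proof that its discrete Dirichlet form collapses onto $\int\hat a(\rho)(\partial_\theta J)^2$: this is the only place where the non-gradient structure really enters, and it amounts to combining the time-integrated averaging lemma with the approximation of the effective diffusivity $\hat a$ by $a_{g(\cdot,\cdot)}$ from Proposition~\ref{prop_coeff_diff} — morally, the content of the gradient-replacement estimate (Theorem~\ref{grad_replac_var}, Corollary~\ref{grad_replac}) specialised to the conductance term. The remaining ingredients — convergence of the zeroth-order terms and the $\sup_J$ identity — are identical to those in the random-environment slope lemma and in [DV], [Q], and present no new difficulty.
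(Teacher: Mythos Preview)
Your approach is essentially the same as the paper's: both apply Lemma~\ref{lem_h-1} with a test function of the form $V(t,x)=\sum_i J(t,i/N)x_i+\tfrac1N\sum_i \partial_\theta J(t,i/N)\,(\text{corrector})$, show that the boundary and $\partial_t V$ terms converge to $2\int_0^T\langle\partial_t\rho,J\rangle$, compute the limiting Dirichlet form via the averaging lemma, and then optimize over $J$. The only difference is in the corrector: the paper inserts a \emph{fixed} local function $F(x_{i-k},\dots,x_{i+k})$, obtains $\int_0^T\!\int_\T a_F(\rho)(\partial_\theta J)^2$, and then optimizes over $F$ at the very end, whereas you insert the $\rho$-dependent near-minimizer $g(\,\cdot\,,\rho^{\epsilon_1}_{i/N}(t))$ from Proposition~\ref{prop_coeff_diff}(ii) directly and send $\epsilon,C$ to their limits afterwards.

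Your variant makes the passage from $a_F(\rho)$ to $\hat a(\rho)$ completely explicit (the paper's ``optimize in $F$'' hides the interchange of infimum and integral), at the small cost that Proposition~\ref{prop_averaging} must be applied to the local function $h_g(\tau_i x;\rho^{\epsilon_1}_{i/N}(t))$, which depends on the slow variable $i/N$ through the second argument of $g$; this is standard (freeze $\rho^{\epsilon_1}$ on a fine partition of $\T$ and use smoothness of $g$ in $y$), but you should flag it. Both routes are correct and yield the same bound.
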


\begin{proof}
Let $J : [0,T]\times \T \longrightarrow \R$ be a smooth function, and $F : \R^{2k+1} \longrightarrow \R$ be a smooth, bounded function. Applying Lemma \ref{lem_h-1} with $V(t,x) = \sum J(t,i/N)x_i + \frac{1}{N}J'(t,i/N)F(x_{i-k},.., x_{i+k})$, we get 
\begin{align}
\frac{1}{N} \int_0^T{|\dot{\nu}_t|^2dt} &\geq 2\frac{1}{N}\int{V(T,x)\nu_T(dx)} - 2\frac{1}{N}\int{V(0,x)\nu_0(dx)} \notag \\
&\hspace{5mm} - 2\frac{1}{N}\int_0^T{\int{\frac{\partial V}{\partial t}(x)\nu_t(dx)}dt} \notag \\
&\hspace{1cm} - \frac{1}{N}\int_0^T{\int{\langle A(x)\nabla V(t,x), \nabla V(t,x) \rangle \nu_t(dx)}dt}.
\end{align}

We have
\begin{align}
\frac{1}{N}\int{V(T,x)\nu_T(dx)} &= \frac{1}{N}\int{J(T,i/N)x_i \nu_T(dx)} + \frac{1}{N^2}\int{J'(T,i/N)F(x_{i-k},..,x_{i+k}) \nu_T(dx)} \notag \\
&= \frac{1}{N}\int{J(T,i/N)x_i \nu_T(dx)} + O\left( \frac{1}{N} \right) \notag \\
& \longrightarrow \int{J(T,\theta)\rho(T,\theta)d\theta}.
\end{align}

In the same way,
\begin{equation}
\frac{1}{N}\int{V(0,x)\nu_0(dx)} \longrightarrow \int{J(0,\theta)\rho(0,\theta)d\theta}
\end{equation}
and
\begin{equation}
\frac{1}{N}\int_0^T{\int{\frac{\partial V}{\partial t}(x)\nu_t(dx)}dt} \longrightarrow \int_0^T{\int_{\T}{\frac{\partial J}{\partial t}(t,\theta)\rho(t,\theta)d\theta}dt}.
\end{equation}
For the last term, we have
\begin{align}
&\frac{1}{N}\int_0^T{\int{\langle A(x)\nabla V(t,x), \nabla V(t,x) \rangle \nu_t(dx)}dt} \notag \\
&= \frac{1}{N}\int_0^T\int\underset{i}{\sum} N^2a(x_i, x_{i+1})\Bigg(J(t,(i+1)/N) - J(t,i/N)  \notag \\
& \hspace{5mm} + \left. \frac{1}{N}\underset{j = i-k}{\stackrel{i+k}{\sum}} \hspace{1mm} \frac{\partial}{\partial x_{i+1}}F(x_{j-k},..,x_{j+k}) - \frac{\partial}{\partial x_i}F(x_{j-k},..,x_{j+k}) \right)^2\nu_t(dx)dt \notag \\
&= \frac{1}{N}\int_0^T{\int{\underset{i}{\sum} a(x_i, x_{i+1})J'\left(t,\frac{k}{N}\right)^2\left(1 - \left(\frac{\partial}{\partial x_{i+1}} - \frac{\partial}{\partial x_i} \right) \underset{j = i-k}{\stackrel{i+k}{\sum}} F(x_{j-k},..,x_{j+k}) + O\left(\frac{k}{N}\right)\right)^2\nu_t(dx)}dt} \notag \\
&\longrightarrow \int_0^T{\int_{\T}{a_F(\rho(t,\theta))\left(\frac{\partial J}{\partial \theta} \right)^2d\theta}dt}.
\end{align}

We combine these convergence estimates, and then optimize in $F$ and $J$ to get the desired result.

\end{proof}

\begin{lem}For a flow of marginals $\nu_{N,t}$ that weakly converges to $\rho$, and such that $\frac{1}{N}J_N((\nu_{N,t})_t)$ is bounded, we have
$$\underset{N}{\liminf} \hspace{1mm} \frac{1}{N} \int_0^T{\int{\frac{\langle A(\nabla \nu_t + \nu_t\nabla H), (\nabla \nu_t + \nu_t\nabla H) \rangle}{\nu_t}}dt} \geq \int_0^T{\int_{\T}{\hat{a}(\rho(t,\theta))(\partial_{\theta}\varphi'(\rho))^2d\theta}dt}.$$
\end{lem}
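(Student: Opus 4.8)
Writing $g_{N,t}$ for the density of $\nu_{N,t}$ with respect to $\mu_N=e^{-H}$, the identity $\nabla\nu_{N,t}+\nu_{N,t}\nabla H=(\nabla g_{N,t})\mu_N$ shows that the quantity to be bounded is $\frac1N\int_0^T\int{\langle A\nabla g_{N,t},\nabla g_{N,t}\rangle/g_{N,t}\,d\mu_N}\,dt$; the hypothesis $\tfrac1N J_N((\nu_{N,t})_t)\leq C$ together with $A\geq c^{-1}A_0$ places us, in the time-integrated sense, under the assumptions of Proposition \ref{prop_averaging} and Lemma \ref{limites_dans_h1}, so that $\varphi'(\rho(t,\cdot))\in H^1(\T)$ for a.e.\ $t$. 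The plan is to bound this Fisher information from below by completing the square against a test density, just as in the ``lower bound for the upper gradient'' lemma of the random-environment case, but inserting a Varadhan corrector so that the gradient replacement estimate of Corollary \ref{grad_replac} becomes available. Concretely: fix a smooth $J:[0,T]\times\T\to\R$, a large constant $C$, a small $\epsilon>0$, and a corrector $g(x_{-k},\dots,x_k,y)$ furnished by part (ii) of Proposition \ref{prop_coeff_diff} for the pair $(\epsilon,C)$; let $G_{N,t}=Z_{N,t}^{-1}\exp(\phi_{N,t})$, where $\phi_{N,t}$ is obtained by pairing a suitably normalised discrete primitive of $J$ with the spins $(x_i)$ and adding local correctors $\phi_{N,t}^{\mathrm{corr}}$ built from $g(x_{i-k},\dots,x_{i+k},\rho^{\ell/N}_{i/N}(t))$, with $\rho^c_\theta(t):=\int_{\theta-c}^{\theta+c}\rho(t,s)\,ds$ as in Theorem \ref{grad_replac_var}; the normalisation of $\phi_{N,t}$ is chosen so that, after Abel summation, the generator term $L\phi_{N,t}$ reproduces up to negligible remainders the combination $\sum_i \partial_\theta J(t,i/N)\bigl(W_{i,i+1}-N^{-2}(Lg)(x_{i-k},\dots,x_{i+k},\rho^{\ell/N}_{i/N}(t))\bigr)$ of Corollary \ref{grad_replac}, while the discrete gradient of $\phi_{N,t}$ has, at the bond $(i,i+1)$, an increment proportional to $\partial_\theta J(t,i/N)\bigl(1-(\partial_{x_{i+1}}-\partial_{x_i})\sum_j g(x_{j-k},\dots,x_{j+k},\rho^{\ell/N}_{j/N}(t))\bigr)$, which is exactly the quantity entering the definition of $a_g$. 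Expanding $\int{\langle A\nabla(g_{N,t}/G_{N,t}),\nabla(g_{N,t}/G_{N,t})\rangle/(g_{N,t}/G_{N,t})\,G_{N,t}\,d\mu_N}\geq 0$ and integrating by parts against $\mu_N$, exactly as in the random-environment lemma, yields, for all $N$ and $t$,
$$\int{\frac{\langle A\nabla g_{N,t},\nabla g_{N,t}\rangle}{g_{N,t}}d\mu_N}\ \geq\ -2\int{L\phi_{N,t}\,d\nu_{N,t}}\ -\ \int{\frac{\langle A\nabla G_{N,t},\nabla G_{N,t}\rangle}{G_{N,t}^2}\,d\nu_{N,t}},$$
with $L$ the generator of (\ref{sde_nongrad}); I integrate this over $t\in[0,T]$ and divide by $N$.

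The next step is to pass to the limit in the two terms. For the first term, the construction of $\phi_{N,t}$ and Abel summation reduce $-\tfrac1N\int_0^T\int{L\phi_{N,t}\,d\nu_{N,t}}\,dt$, up to errors that vanish thanks to the $L^p$-moment bound of Lemma \ref{lem_borne_lp} on $\psi'$, to a constant multiple of $\tfrac1N\int_0^T\int{\sum_i \partial_\theta J(t,i/N)\bigl(W_{i,i+1}-N^{-2}(Lg)(\cdots)\bigr)\nu_{N,t}(dx)}\,dt$, which by Corollary \ref{grad_replac} converges, after $\lim_N$ then $\lim_\ell$, to $2\int_0^T\int_\T{\partial_\theta J\,\hat a(\rho)\,\partial_\theta\varphi'(\rho)\,d\theta}\,dt$. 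For the second term, the summand is a multiple of $a(x_i,x_{i+1})$ times the square of the bond increment of $\phi_{N,t}$, so after normalisation it is $a(x_i,x_{i+1})\bigl(\partial_\theta J(t,i/N)\bigr)^2\bigl(1-(\partial_{x_{i+1}}-\partial_{x_i})\sum_j g_j\bigr)^2+o(1)$; applying the time-integrated form of the averaging estimate of Proposition \ref{prop_averaging} and recalling the definition of $a_g$, the term converges, after $\lim_N$ then $\lim_\ell$, to $\int_0^T\int_\T{\bigl(\partial_\theta J\bigr)^2 a_{g(\cdot,\rho)}(\rho)\,d\theta}\,dt$, which by the choice of $g$ and part (ii) of Proposition \ref{prop_coeff_diff} — using the global bound on $a_g-\hat a$ and the a priori bounds of Lemma \ref{limites_dans_h1} to dispose of the region where $\rho$ is large — is at most $\int_0^T\int_\T{\bigl(\partial_\theta J\bigr)^2\hat a(\rho)\,d\theta}\,dt+\epsilon$.

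Collecting these estimates and letting $\epsilon\to 0$ gives, for every smooth $J$,
$$\underset{N}{\liminf}\ \frac1N\int_0^T\int{\frac{\langle A\nabla g_{N,t},\nabla g_{N,t}\rangle}{g_{N,t}}\,d\mu_N}\,dt\ \geq\ \int_0^T\Bigl(2\int_\T{\partial_\theta J\,\hat a(\rho)\partial_\theta\varphi'(\rho)}\ -\ \int_\T{\hat a(\rho)(\partial_\theta J)^2}\Bigr)dt.$$
Taking the supremum over smooth $J$: the pointwise-in-$t$ maximiser is $\partial_\theta J(t,\cdot)=\partial_\theta\varphi'(\rho(t,\cdot))$, which is admissible since $\varphi'(\rho(t,\cdot))\in H^1(\T)$ is mean-free and therefore a limit in $H^1(\T)$ of derivatives of smooth periodic functions, and the right-hand side is continuous along this approximation; the supremum is thus $\int_0^T\int_\T{\hat a(\rho(t,\theta))\bigl(\partial_\theta\varphi'(\rho(t,\theta))\bigr)^2\,d\theta}\,dt$, which is the asserted lower bound.

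The step I expect to be the main obstacle is the passage to the limit in the cross and quadratic terms, and especially getting the normalisations right: one must choose $\phi_{N,t}$ so that $L\phi_{N,t}$ produces \emph{exactly} the combination $W_{i,i+1}-N^{-2}Lg$ of Corollary \ref{grad_replac} (so that both the cross term and the quadratic term are of the correct order before dividing by $N$, rather than vanishing or diverging), couple the density argument $\rho^{\ell/N}_{i/N}(t)$ of the corrector with the local empirical density along the same iterated limit $\lim_\ell\lim_N$ used in Theorem \ref{grad_replac_var}, and absorb all remainders uniformly in $N$ via the second-moment and $L^p$-moment bounds of Lemmas \ref{lem_borne_l2} and \ref{lem_borne_lp}.
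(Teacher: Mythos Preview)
Your proposal is correct and follows essentially the same route as the paper: both apply the quadratic lower bound $|v|_A^2\ge 2\langle v,w\rangle_A-|w|_A^2$ to the Fisher information with a test vector built from a discrete primitive of $J$ plus a Varadhan corrector, use Corollary~\ref{grad_replac} on the cross term and Proposition~\ref{prop_averaging} on the quadratic term, then optimize first in the corrector (to turn $a_F$ into $\hat a$) and then in $J$. The only cosmetic differences are that the paper writes the quadratic bound directly with a test vector $\vec J$ rather than via the nonnegativity of the Fisher information of $g_{N,t}/G_{N,t}$, and uses a fixed corrector $F$ followed by ``optimize in $F$'' at the end, whereas you insert the $\rho$-dependent corrector $g(\cdot,\rho^{\ell/N}_{i/N}(t))$ of Proposition~\ref{prop_coeff_diff}(ii) from the outset and carry the iterated limit $\lim_\ell\lim_N$; your variant makes the optimization step more explicit but is otherwise the same argument.
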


\begin{proof}
Let $J(t,\theta)$ be a smooth function, $F : \R^{2k+1} \rightarrow \R$ a smooth function and let $\xi(x) = \sum F(x_{i-k},..,x_{i+k})$. We define$\vec{J}_N(t,x)$ the element of $\R^N$ given by $\vec{J}_N(t,x)_i := \underset{j = 1}{\stackrel{i-1}{\sum}} \hspace{1mm} J'(t,j/N) + \frac{\partial}{\partial x_i}\left[\frac{1}{N}\underset{j = 1}{\stackrel{N}{\sum}} \hspace{1mm} J'(t,j/N)F(x_{j-k},.., x_{j+k})\right]$. We have
\begin{align}
\frac{1}{N}& \int_0^T{\int{\frac{\langle A(\nabla \nu_t + \nu_t\nabla H), (\nabla \nu_t + \nu_t\nabla H) \rangle}{\nu_t}}dt} \notag \\
&\geq \frac{2}{N} \int_0^T{\int{\langle A(\nabla \nu_t + \nu_t\nabla H), \vec{J} \rangle}dt} - \frac{1}{N}\int_0^T{\int{\langle A\vec{J}, \vec{J} \rangle \nu_t(dx)}dt} \notag \\
&= 2\int_0^T{\int{\sum J'(t,i/N)W_{i, i+1}(x) \nu_t(dx)}dt} \notag \\
&\hspace{5mm} - 2\int_0^T{\int{\sum \frac{J'(t,i/N)}{N^2}(LF)(x_{i-k},..,x_{i+k})\nu_t(dx)}dt} \notag \\
&\hspace{1cm} - \int_0^T{\int{\sum a(x_i, x_{i+1})J'(t,i/N)^2(1 - \frac{\partial \xi}{\partial x_{i+1}} + \frac{\partial \xi}{\partial x_i})^2  \nu_t(dx)}dt} + o(1)
\end{align}
We then just have to use Corollary \ref{grad_replac} and optimize in $F$ to obtain
$$\liminf \frac{1}{N} \int_0^T{\int{\frac{\langle A(\nabla \nu_t + \nu_t\nabla H), (\nabla \nu_t + \nu_t\nabla H) \rangle}{\nu_t}}dt} $$
$$\geq 2\int_0^T{\int_{\T}{\varphi'(\rho(t,\theta))J'(t,\theta)d\theta}dt} - \int_0^T{\int_{\T}{\hat{a}(\rho)J(t,\theta)^2d\theta}dt}.$$
Taking the supremum over all smooth functions $J$ then yields our Lemma.

For the Gamma-convergence upper bound, the method we use is pretty much the same as the proof of the LDP lower bound in [Q], so we only give a rough sketch. We fix a smooth profile $\rho$, for which there exists a continuous function $h : [0,T] \times \T  \longrightarrow \R$ such that
\begin{equation} \label{edp_non_grad}
\frac{\partial \rho}{\partial t} = \frac{\partial}{\partial \theta}\left(\hat{a}(\rho)\left(\frac{\partial}{\partial \theta}\varphi'(\rho) + h \right)\right).
\end{equation}
We consider an evolution given by the generator
$$\tilde{L}_Nf := L_Nf + N\underset{i = 1}{\stackrel{N}{\sum}} \hspace{1mm} h(t,i/N)a(x_i,x_{i+1})\left(1 + \frac{\partial}{\partial x_{i+1}}\xi - \frac{\partial}{\partial x_i}\xi\right)\left( \frac{\partial}{\partial x_{i+1}}f - \frac{\partial}{\partial x_i}f \right).$$
and initial condition the local Gibbs state associated to $\rho(0,\cdot)$. It is shown in [Q, Section 3] that the solutions converge to the deterministic profile $\rho$.
We write $f_{N,F}$ the law of the solution. We have
\begin{align}
\frac{1}{N}J_N(f_N) &= \frac{1}{2N}\int_0^T{\int{a(x_i,x_{i+1})h(t,i/N)^2\left(1 + \frac{\partial}{\partial x_{i+1}}\xi - \frac{\partial}{\partial x_i}\xi\right)^2f_N(dx)}dt} \notag \\
&\longrightarrow \frac{1}{2}\int_0^T{\int_{\T}{a_F(\rho)\left(\frac{\partial h}{\partial \theta} \right)^2d\theta}dt}
\end{align}
If, instead of using a fixed function $F$, we use a sequence $(F_N)$ such that $a_{F_N}(\rho)$ uniformly converges to $\hat{a}(\rho)$ on compact sets (which is possible, see Proposition \ref{prop_coeff_diff}), we obtain the upper bound
$$\limsup \frac{1}{N}J_N(f_{N,F_N}) \leq \frac{1}{2}\int_0^T{\int_{\T}{\hat{a}(\rho)\left(\frac{\partial h}{\partial \theta} \right)^2d\theta}dt}$$
which, by representation (\ref{edp_non_grad}), is the one we needed to prove the upper bound in the Gamma convergence.
\end{proof}

\vspace{1cm}

\textbf{Acknowledgments:} I would like to thank Christian L\'eonard for having pointed out to me the use of Girsanov's theorem to understand relative entropy for diffusion processes. I would also like to thank Thierry Bodineau, Georg Menz, Felix Otto, S.R.S. Varadhan and C\'edric Villani for discussions about gradient flows and hydrodynamic limits.
\vspace{1cm}

\textbf{Bibliography}

\begin{itemize}

\item
\label{ADPZ1}[ADPZ1]
	S. Adams, N. Dirr, M. A. Peletier, and J. Zimmer. 
	From a large-deviations principle to the Wasserstein gradient flow: a new micro-macro passage. 
	Communications in Mathematical Physics, 307:791--815, 2011
	
\item
\label{ADPZ2}[ADPZ2]
	S. Adams, N. Dirr, M. A. Peletier, and J. Zimmer. 
	Large deviations and gradient flows. 
	http://arxiv.org/abs/1201.4601 , 2012.

\item
\label{AGS}[AGS]
	L. Ambrosio, N. Gigli, and G. Savar\'e,
	Gradient Flows In Metric Spaces And In The Space Of Probability Measures,
	Springer, 2005
	
\item
\label{ASZ}[ASZ]
	L. Ambrosio, G. Savar\'e and L. Zambotti,
	Existence and Stability for Fokker-Planck equations with log-concave reference measure,
	Probability Theory and Related Fields, vol. 145, 517-564 (2009)

\item
\label{BLM}[BLM]
	L. Bertini, C. Landim and M. Mourragui,
	Dynamical large deviations for the boundary driven weakly asymmetric exclusion process. 
	\textit{Ann. Probab.} 37 (2009), no. 6, 2357--2403.

\item
\label{DLR}[DLR]
	Manh Hong Duong, Vaios Laschos, and D.R.M. Renger. 
	Wasserstein gradient flows from large deviations of thermodynamic limits.
	\textit{ESAIM: COCV}, 19(4), 1166-1188, 2013.
	
\item
\label{DG}[DG]
	D. A. Dawson and J. Gartner
	Large deviations from the McKean-Vlasov limit for weakly interacting diffusions.
	\textit{Stochastics} 20 (1987), no. 4, 247--308. 
	
\item
\label{DGMT}[DGMT]
	De Giorgi, Ennio; Marino, Antonio; Tosques, Mario
	Problems of evolution in metric spaces and maximal decreasing curve. 
	\textit{Atti Accad. Naz. Lincei Rend. Cl. Sci. Fis. Mat. Natur.} (8) 68 (1980), no. 3, 180--187. 
	
\item
\label{DPZ}[DPZ]
	M. H. Duong, M. Peletier and J. Zimmer,
	GENERIC formalism of a Vlasov-Fokker-Planck equationand connection to Large Deviation Principle. 
	\textit{Nonlinearity} 26 (2013) 2951-2971. 
	
\item
\label{DV}[DV]
	M. Donsker and S.R.S. Varadhan,
	Large deviations from a hydrodynamic scaling limit.
	\textit{Comm. Pure Appl. Math.}, Number 42 (1989), 243-270.
	
\item
\label{DZ}[DZ]
	A. Dembo and O. Zeitouni,
	Large deviations techniques and applications.
	\textit{Stochastic Modelling and Applied Probability}, 38. Springer-Verlag, Berlin, 2010. xvi+396 pp.
	
\item
\label{F1}[F1]
	M. Fathi
	A two-scale approach to the hydrodynamic limit, part II : local Gibbs behavior.
	\textit{ALEA Lat. Am. J. Probab. Math. Stat.} 10 (2013), no. 2, 625–651.

\item
\label{F2}[F2]
	M. Fathi,
	Modified logarithmic Sobolev inequalities for canonical ensembles.
	preprint, http://arxiv.org/abs/1306.1484
	
\item
\label{FM}[FM]
	M. Fathi and G. Menz,
	Hydrodynamic limit for conservative spin systems with super-quadratic, partially inhomogeneous single-site potential. 
	preprint, http://arxiv.org/abs/1405.3327
	
\item
\label{FK}[FK]
	J. Feng and Thomas G. Kurtz. 
	Large deviations for stochastic processes, 
	volume 131 of Mathematical Surveys and Monographs. 
	American Mathematical Society, Providence, RI, 2006
		
\item
\label{Fo}[Fo]
	Follmer, H.,
	Random fields and diffusion processes,
	Ecole d'Et\'e de Probabilit\'es de Saint-Flour XV-XVII, 1985-87,
	Lecture Notes in Math., 1362, Springer, 1988. (101-203).
	
\item
\label{Fr}[Fr]
	J. Fritz,
	Hydrodynamics in a symmetric random medium.
	\textit{Comm. Math. Phys.}, 125, 13-25, (1989).
	
\item
\label{GOVW}[GOVW]
	N. Grunewald, F. Otto, C. Villani and M. G. Westdickenberg,
	A two-scale approach to logarithmic Sobolev inequalities and the hydrodynamic limit.
	\textit{Ann. Inst. H. Poincar\'e Probab. Statist}. 
	45 (2009), 2, 302--351.

\item
\label{GPV}[GPV]
	M.Z. Guo, G.C. Papanicolaou and S.R.S. Varadhan,
	Nonlinear Diffusion Limit for a System with Nearest Neighbor Interactions,
	\textit{Commun. Math. Phys.} 118, 31-59 (1988)
	
\item
\label{JKO}[JKO]
	R., D. Kinderlehrer, and F. Otto.
	The variational formulation of the Fokker-Planck equation.
	SIAM J. Math. Anal., \textbf{29(1)}:1-17, (1998).
	
\item
\label{K}[K]
	E. Kosygina,
	The Behavior of the Specific Entropy in the Hydrodynamic Scaling Limit for the Ginzburg-Landau Model,
	\textit{Markov Processes and Related Fields}, 
	\textbf{7}, 3 (2001), pp. 383-417.
	
\item
\label{KV}[KV]
	C. Kipnis and S.R.S. Varadhan,
	Central limit theorem for additive functionals of reversible Markov processes and applications to simple exclusions.
	\textit{Commun.Math.Phys.} 104, 1-19 (1986)
	
\item
\label{L}[L]
	S. Lisini,
	Nonlinear diffusion equations with variable coefficients as gradient flows in Wasserstein spaces,
	ESAIM: Control Optimization Calculus of Variations 15 (2009) 712-740
	
\item
\label{Le}[Le]
	C. Leonard,
	Girsanov Theory under a finite entropy condition,
	S\'eminaire de probabilit\'es de Strasbourg, vol. XLIV. 
	Lecture Notes in Mathematics 2046, Springer-Verlag, 2012, 429-465.

\item
\label{M}[M]
	J. Maas,
	Gradient flows of the entropy for finite Markov chains
	\textit{J. Funct. Anal.} 261 (8) (2011), 2250-2292.
	
\item
\label{Ma}[Ma]
	M. Mariani,
	A Gamma-convergence approach to large deviations, 
	\newline http://arxiv.org/abs/1204.0640
	
\item
\label{O}[O]
	F. Otto, 
	The geometry of dissipative evolution equations: the porous medium equation.
	\textit{Comm. Partial Differential Equations}, \textbf{26}(1-2):101-174, (2001).
	
\item
\label{OV}[OV]
	F. Otto and C. Villani, 
	Generalization of an inequality by Talagrand and links with the logarithmic Sobolev inequality.
	\textit{J. Funct. Anal.}, \textbf{173}(2):361-400, (2000).

\item
\label{Q}[Q]
	J. Quastel, 
	Large deviations from a hydrodynamic scaling limit for a nongradient system. 
	Ann. Probab. 23 (1995), no. 2, 724–742. 

\item
\label{S}[S]
	S. Serfaty,
	Gamma-convergence of gradient flows on Hilbert and metric spaces and applications,
	\textit{Disc. Cont. Dyn. Systems}, A, 31, No 4, (2011), 1427-1451

\item
\label{Va}[Va]
	S.R.S. Varadhan,
	Nonlinear diffusion limit for a system with nearest neighbor interactions II,
	in \textit{Asymptotic problems in probability theory : stochastic models and diffusions on fractals},
	Pitman Research Notes in Mathematics Series, \textbf{283}.
	
\item
\label{Vi1}[Vi1]
	C. Villani,
	Topics in Optimal Transportation,
	Graduate Studies in Mathematics, Vol. \textbf{58},
	American Mathematical Society.
	
\item
\label{Vi2}[Vi2]
	C. Villani,
	Optimal Transport, Old and New.
	\textit{Grundlehren der mathematischen Wissenschaften},
	Vol. 338, Springer-Verlag, 2009.
	
\item
\label{Y}[Y]
	H.T. Yau,
	Relative Entropy and Hydrodynamics of Ginzburg-Landau Models, 
	\textit{Lett. Math. Phys.}, \textbf{22} (1991), 63-80.

\end{itemize}

\end{document}